\def\imod#1{\allowbreak\mkern10mu({\operator@font mod}\,\,#1)}
\newtheorem{theorem}{Theorem}[section]
\newtheorem{prop}[theorem]{Proposition}
\newtheorem{lemma}[theorem]{Lemma}
\newtheorem{claim}[theorem]{Claim}
\newtheorem*{clm}{Claim}
\theoremstyle{definition}
\newtheorem{definition}[theorem]{Definition}
\newtheorem{corollary}[theorem]{Corollary}
\theoremstyle{remark}
\newtheorem{remark}[theorem]{Remark}
\theoremstyle{remark}
\numberwithin{equation}{section}
    \DeclareMathOperator{\dom}{dom}
    \DeclareMathOperator{\ran}{ran}
    \DeclareMathOperator{\im}{im}
    \DeclareMathOperator{\fix}{fix}
    \DeclareMathOperator{\cofin}{cofin}
    \DeclareMathOperator{\card}{card}
    \DeclareMathOperator{\sign}{sign}
    \DeclareMathOperator{\oc}{oc}
    \DeclareMathOperator{\cl}{cl}
    \DeclareMathOperator{\Dp}{Dp}
    \DeclareMathOperator{\Rk}{Rk}
    \DeclareMathOperator{\non}{non}
    \DeclareMathOperator{\add}{add}
    \DeclareMathOperator{\cof}{cof}
\newcommand{\srestrict}{\, {|\:\!\!\!\!\upharpoonright}\,}
\newcommand{\restrict}{{\upharpoonright}}
\newcommand{\rest}{{\upharpoonright}}
    \newcommand{\forces}{\Vdash}
    \newcommand{\What}[1]{\widehat W_{\!\! #1}}
    \newcommand{\halts}{\!\downarrow}
    \newcommand{\undef}{\!\uparrow}
    \newcommand{\on}{\mathbb{ON}}
    \newcommand{\la}{\langle}
    \newcommand{\ra}{\rangle}
    \newcommand{\PO}{\mathcal{P}}
\def\Z{{\mathbb Z}}
\def\F{{\mathbb F}}
\def\P{{\mathbb P}}
\def\Q{{\mathbb Q}}
\def\BbS{{\mathbb S}}
\def\BbL{{\mathbb L}}
\def\PP{{\mathbb P}}
\def\QQ{{\mathbb Q}}
\def\D{{\mathbb D}}
\date{\today}
\begin{document}

\title[Template iterations and maximal cofinitary groups]{Template iterations and maximal cofinitary groups}

%    Information for first author
\author{Vera Fischer}

\address{Kurt G\"odel Research Center, University of Vienna, W\"ahringer Strasse 25, 1090 Vienna, Austria}
\email{vera.fischer@univie.ac.at}

%    Information for second author
\author{Asger T\"ornquist}

\address{Department of Mathematical Sciences, University of Copenhagen, Universitetspark 5, 2100 Copenhagen, Denmark}
\email{asgert@math.ku.dk}

\thanks{The first author would like to thank the Austrian Science Fund FWF for the generous support
through grant no. M1365-N13. The second author would like to thank Denmark's Council for Independent Research for generous support through grant. no. 10-082689/FNU}

%    General info
\subjclass[2010]{03E17;03E35}

\keywords{Cardinal characteristics; maximal cofinitary groups; template forcing iterations}

\maketitle

\begin{abstract} In \cite{brendle03}, J\"org Brendle used Hechler's forcing notion for adding a maximal almost family along an appropriate
template forcing construction to show that  $\mathfrak a$ (the minimal size of a maximal almost disjoint family) can be of countable cofinality. The main result of the present paper is that  $\mathfrak a_g$, the minimal size of maximal cofinitary group, can be of countable cofinality. To prove this we define a natural poset for adding a maximal cofinitary group of a given cardinality, which enjoys certain combinatorial properties allowing it to be used within a similar template forcing construction. Additionally we obtain that $\mathfrak a_p$, the minimal size of a maximal family of almost disjoint permutations, and $\mathfrak a_e$, the minimal size of a maximal eventually different family, can be of countable cofinality.
\end{abstract}

\section{Introduction}

In~\cite{shelah}, Shelah introduced a template iteration forcing technique, which provided the consistency of  $\aleph_2\leq\mathfrak{d}<\mathfrak{a}$
(without the assumption of a measurable). The technique was further developed by Brendle, who established that it is consistent that the almost disjointness number $\mathfrak a$ is of countable cofinality (see~\cite{brendle03}). Broadly speaking, the template iteration of~\cite{brendle03} can be thought of as a forcing construction, which on one side has characteristics of a ``product-like'' forcing, and on the other hand has characteristics of finite support iteration.  In \cite{brendle03}, the ``product-like'' side of the construction was used to force a maximal almost disjoint family of some arbitrary uncountable cardinality $\lambda$, which in particular can be of countable cofinality, while the ``finite support'' side of the construction was used to add a cofinal family of dominating reals of a prescribed size $\lambda_0$. This cofinal family gives a prescribed size of the bounding number in the generic extension and so gives a prescribed lower bound of $\mathfrak{a}$. An isomorphism of names argument (which assumes CH holds in the ground model) provides that there are no mad families of intermediate cardinalities $\mu$, i.e. cardinality $\mu$ such that $\aleph_2\leq\lambda_0\leq\mu<\lambda$.

A \emph{cofinitary group} is a subgroup $G$ of the group $S_\infty$ of all permutations of $\omega$, which has the property that each of its non-identity elements has only finitely many fixed point. Such a group is called \emph{maximal} if it is not contained in a strictly larger cofinitary group. The minimal size of a maximal cofinitary groups is denoted $\mathfrak{a}_g$. Following an approach, similar to the one of~\cite{brendle03}, we prove:

\begin{theorem}\label{mainthm1} Assume CH. Let $\lambda$ be a singular cardinal of countable cofinality. Then there is a ccc generic extension in
which  $\mathfrak{a}_g=\lambda$.
\end{theorem}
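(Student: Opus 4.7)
The plan is to imitate Brendle's template construction from \cite{brendle03}, substituting for Hechler's mad family poset a new forcing that adds a maximal cofinitary group of prescribed size. Because the ZFC inequality $\mathfrak{b}\le\mathfrak{a}_g$ holds, the strategy is to push $\mathfrak{b}$ up to some regular $\kappa<\lambda$ (using a ``finite-support'' side of the template, iterating Hechler forcing) while using the ``product-like'' side to add a maximal cofinitary group of size exactly $\lambda$. An isomorphism-of-names argument, made possible by the symmetry of the new forcing and by CH in the ground model, rules out any maximal cofinitary group of intermediate cardinality $\mu$ with $\kappa\le\mu<\lambda$.

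First I would define a ccc forcing $\mathbb{P}^\lambda_G$ whose conditions are finite approximations to $\lambda$ many permutations $\sigma_\alpha\in S_\infty$ indexed by $\alpha<\lambda$, together with a finite piece of side information that forbids fixed points of reduced words in the $\sigma_\alpha$'s beyond an initial segment, in the now-standard style of cofinitary group forcing. The generic object would generate a cofinitary group $G_\lambda$ of size $\lambda$; by a diagonal density argument along a bookkeeping of potential ``spoiling'' permutations in intermediate extensions, $G_\lambda$ is arranged to be maximal. Next I would verify the two key combinatorial properties needed by the template machinery: (i) a strong symmetry under permutations of the index set $\lambda$, giving rise to an isomorphism-of-names phenomenon, and (ii) a Knaster-type / linkedness property ensuring that the restrictions $\mathbb{P}^A_G$ for $A\subseteq\lambda$ compose well with the finite-support iteration side.

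Plugging $\mathbb{P}^\lambda_G$ into the template architecture of \cite{brendle03} then produces a ccc iteration $\mathbb{P}$. In the resulting generic extension one has: $\mathfrak{b}=\kappa$ from the Hechler side, hence $\mathfrak{a}_g\ge\kappa$ by $\mathfrak{b}\le\mathfrak{a}_g$; a maximal cofinitary group of size $\lambda$ from the product-like side, hence $\mathfrak{a}_g\le\lambda$; and, by the isomorphism-of-names argument, no cofinitary group of cardinality $\mu\in[\kappa,\lambda)$ can be maximal, because any such candidate is coded by a name supported on $<\lambda$ coordinates and can be moved by a suitable index permutation to miss a generic permutation $\sigma_\beta$ that then witnesses non-maximality. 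Taking $\lambda$ of cofinality $\omega$ yields $\mathfrak{a}_g=\lambda$.

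The main obstacle is designing $\mathbb{P}^\lambda_G$ so that it simultaneously (a) forces cofinitariness and maximality of the generated group, (b) enjoys enough symmetry for the isomorphism-of-names argument, and (c) composes with Brendle's template. For mad families this is comparatively straightforward, because ``almost disjointness'' is essentially a pointwise condition on pairs $(n,\alpha)\in\omega\times\lambda$. For cofinitary groups the fixed-point condition must be imposed on all reduced words in the generators simultaneously, which makes the side conditions much more rigid and the symmetry analysis more delicate. The most technical point is likely verifying that after permuting index sets, the word-level bookkeeping in side conditions transfers correctly so that the isomorphism of names yields an actual isomorphism of the generated cofinitary groups, and not merely of the underlying tuples of permutations.
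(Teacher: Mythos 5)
Your overall architecture is the paper's: a Zhang-style finite-approximation poset for a cofinitary group on $\lambda$ generators placed on the product-like side of Brendle's template, a dominating-type Suslin poset on the finite-support side to force the lower bound, and an isomorphism-of-names argument under CH to exclude intermediate cardinalities. One remark on the lower bound: the inequality $\mathfrak b\le\mathfrak a_g$ you invoke is not independent of Brendle--Spinas--Zhang --- it is known only via $\mathfrak b\le\non(\mathcal M)\le\mathfrak a_g$ --- so your Hechler route and the paper's route (localization forcing $\BbL$, which yields $\add(\mathcal N)\ge\lambda_0$ and $\cof(\mathcal N)\le\lambda_0$, hence $\non(\mathcal M)=\lambda_0$) both ultimately rest on $\non(\mathcal M)\le\mathfrak a_g$; either Suslin poset fits the template framework, so this difference is cosmetic.

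The genuine problem is your mechanism for excluding a maximal cofinitary group $\mathcal G$ of intermediate size: you propose to move its name by an index permutation so that it ``misses a generic permutation $\sigma_\beta$ that then witnesses non-maximality.'' That step fails. The generator $\sigma_\beta$ is generic only for the word-conditions of $\Q$ relative to the \emph{other generators} (and a fixed ground-model family); the forcing imposes no constraint on fixed points of words mixing $\sigma_\beta$ with arbitrary permutations of the extension, such as the elements of $\mathcal G$. Genericity of $\sigma_\beta$ over a submodel containing $\mathcal G$ is genericity for the quotient of the template iteration, not for Zhang's poset relative to $\mathcal G$, so there is no reason $\langle\mathcal G,\sigma_\beta\rangle$ should be cofinitary; if this worked it would prove too much, needing neither CH nor the restriction $\mu\ge\lambda_0$. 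The correct argument constructs from the names $\dot g^\beta$ ($\beta<\kappa$) enumerating $\mathcal G$ a \emph{new} name $\dot g^\kappa$ with countable $L$-domain $B^\kappa$ such that for every finite $F\subseteq\kappa$ there are $\alpha<\kappa$ and an isomorphism of subposets $\P_{\cl(\bigcup_{\beta\in F}B^\beta\cup B^\alpha)}\to\P_{\cl(\bigcup_{\beta\in F}B^\beta\cup B^\kappa)}$ carrying $\dot g^\alpha$ to $\dot g^\kappa$ and fixing each $\dot g^\beta$, $\beta\in F$; then every word in $\dot g^\kappa$ and finitely many $\dot g^\beta$'s is the image of a word lying inside $\mathcal G$, hence has finitely many fixed points, so $\langle\mathcal G,\dot g^\kappa\rangle$ is cofinitary and $\mathcal G$ is not maximal. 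This is where CH and $\mu\ge\lambda_0$ enter: the witness to non-maximality is a newly built name, not one of the generic generators.
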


To prove the above theorem, we introduce a forcing notion which adds a maximal cofinitary group of prescribed size, and which enjoys certain combinatorial properties, allowing for the poset to be iterated along a template (see Definition~\ref{d.adding_mcg}). We use this poset along the ``product-like'' side of an appropriate template iteration, in order to add a maximal cofinitary group of desired cardinality, say $\lambda$. The ``finite support" side of the construction is used to add a cofinal family $\Phi$ of slaloms, each of which localizes the ground model reals. Using a combinatorial characterization of $\add(\mathcal{N})$ and $\cof(\mathcal{N})$ (the additivity and cofinality of the null ideal) due to Bartoszy\'nski, we obtain that in the final generic extension both of those cardinal invariant have the size of the family $\Phi$. By a result of Brendle, Spinas and Zhang (see~\cite{brspzh00}), the uniformity of the meagre ideal $\non(\mathcal{M})$ is less than or equal to the $\mathfrak a_g$, and so we obtain that in the final generic extension $|\Phi|$ is a lower bound for $\mathfrak{a}_g$. Finally, an isomorphism of names argument, which is almost identical to the maximal almost disjoint families case, provides that in the final generic extension there are no maximal cofinitary groups of intermediate cardinalities, i.e. cardinalities $\mu$ such that $|\Phi|\leq\mu<\lambda$.
Again for this isomorphism of names argument to work we have to assume that CH, as well as $\aleph_2\leq|\Phi|$.

Though proving Theorem \ref{mainthm1} is our main goal, we take an axiomatic approach which allows us to obtain slightly more. We define two classes of forcing notions which in a natural capture the key properties of our poset for adding a maximal cofinitary group and Hechler's forcing notion for adding a dominating real, respectively. We refer to these posets as \emph{finite function posets with the strong embedding property} (see~Definitions~\ref{finite_function_poset} and~\ref{d.strong_reduction})) and \emph{good $\sigma$-Suslin forcing notions} (see Definitions~\ref{d.sigma_suslin} and~\ref{d.good_suslin}) respectively. We generalize the template iteration techniques of~\cite{brendle03}, so that arbitrary representatives of the above two classes can be iterated along a template (see~Definition~\ref{template_poset} and Lemma~\ref{l.mainlemma}) and establish some basic combinatorial properties of this generalized iteration.
Whenever $\mathcal{T}$ is a template, $\Q$ is a finite function poset with the strong embedding property, and $\BbS$ is a good  $\sigma$-Suslin forcing notion, we denote by $\P(\mathcal{T},\Q,\BbS)$ the iteration of $\Q$ and $\BbS$ along $\mathcal{T}$ (see Definition~\ref{template_poset}). For example we show that whenever $\Q$ is Knaster, then the entire iteration $\P(\mathcal{T},\Q,\BbS)$ is Knaster (see Lemma~\ref{l.template_knaster}).

Following standard notation, let $\mathfrak{a}_p$ and $\mathfrak{a}_e$ denote the minimal size of a maximal family of almost disjoint permutations on $\omega$ and the minimal size of a maximal almost disjoint family of functions from $\omega$ to $\omega$, respectively.  Let $\mathcal{T}_0$ be the template used by Brendle in~\cite{brendle03}. Then our results can be summarized as follows:

\begin{theorem}\label{mainthm2} Assume CH. Let $\lambda$ be a singular cardinal of countable cofinality and let  $\bar{\mathfrak{a}}\in\{\mathfrak{a},\mathfrak{a}_p,\mathfrak{a}_g,\mathfrak{a}_e\}$. Then there are a good $\sigma$-Suslin poset $\BbS_{\bar{\mathfrak{a}}}$
and a finite function poset with the strong embedding property $\Q_{\bar{\mathfrak{a}}}$, which is Knaster (and so by Lemma~\ref{l.template_knaster}  $\P(\mathcal{T}_0,\Q_{\bar{\mathfrak{a}}},\BbS_{\bar{\mathfrak{a}}})$ is Knaster)
such that $V^{\P(\mathcal{T}_0,\Q_{\bar{\mathfrak{a}}},\BbS_{\bar{\mathfrak{a}}})}\vDash {\bar{\mathfrak{a}}}=\lambda$. Then in particular $V^{\P(\mathcal{T}_0,\Q_{\bar{\mathfrak{a}}},\BbS_{\bar{\mathfrak{a}}})}\vDash\hbox{cof}(\bar{\mathfrak{a}})=\omega$.
\end{theorem}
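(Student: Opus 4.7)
The plan is to carry out, for each of the four characteristics $\bar{\mathfrak{a}} \in \{\mathfrak{a},\mathfrak{a}_p,\mathfrak{a}_g,\mathfrak{a}_e\}$, the same scheme that the introduction sketches in the case $\bar{\mathfrak{a}} = \mathfrak{a}_g$. Namely: define a natural finite-function poset $\Q_{\bar{\mathfrak{a}}}$ that generically adds a maximal witness for $\bar{\mathfrak{a}}$ of prescribed size $\lambda$; choose a good $\sigma$-Suslin poset $\BbS_{\bar{\mathfrak{a}}}$ which, used on the finite-support side of Brendle's template $\mathcal{T}_0$, produces a cofinal family $\Phi$ of slaloms localizing all ground-model reals; verify that $\Q_{\bar{\mathfrak{a}}}$ satisfies Definitions~\ref{finite_function_poset} and~\ref{d.strong_reduction} and is Knaster; apply Lemma~\ref{l.template_knaster} to conclude that $\P(\mathcal{T}_0,\Q_{\bar{\mathfrak{a}}},\BbS_{\bar{\mathfrak{a}}})$ is Knaster, hence ccc and cardinal-preserving; and finally sandwich $\bar{\mathfrak{a}}$ between $|\Phi|$ and $\lambda$ in the extension, while an isomorphism-of-names argument excludes intermediate values.

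The first and main step is the construction of the posets $\Q_{\bar{\mathfrak{a}}}$. For $\bar{\mathfrak{a}} = \mathfrak{a}_g$ this is the poset of Definition~\ref{d.adding_mcg}; for $\bar{\mathfrak{a}} = \mathfrak{a}$, $\Q_{\mathfrak{a}}$ is the finite-function version of Hechler's mad-family poset already used in~\cite{brendle03}; and for $\bar{\mathfrak{a}} = \mathfrak{a}_p$ or $\mathfrak{a}_e$, $\Q_{\bar{\mathfrak{a}}}$ is built in close analogy with $\Q_{\mathfrak{a}_g}$, replacing finite partial cofinitary-group elements by finite partial permutations (respectively, finite partial functions $\omega\to\omega$) and adjusting the ``promises'' attached to conditions so that genericity forces almost disjointness in the permutation (respectively, eventual-difference) sense. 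In each case one must verify that (i) the generic object of size $\lambda$ is maximal, (ii) $\Q_{\bar{\mathfrak{a}}}$ is a finite function poset with the strong embedding property, and (iii) $\Q_{\bar{\mathfrak{a}}}$ is Knaster. I expect (ii) to be the chief technical obstacle, since the strong embedding property is exactly what licenses the template iteration of Definition~\ref{template_poset} and Lemma~\ref{l.mainlemma}; the proofs should however closely parallel the $\mathfrak{a}_g$ case, with the uniformity across the four characteristics guaranteed by the abstract setup.

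With the four posets in hand the remainder is uniform. Take $\BbS_{\bar{\mathfrak{a}}}$ to be a localization-type good $\sigma$-Suslin forcing (the same choice works for all four). The upper bound $\bar{\mathfrak{a}} \leq \lambda$ holds because the product-like side of $\P(\mathcal{T}_0,\Q_{\bar{\mathfrak{a}}},\BbS_{\bar{\mathfrak{a}}})$ adds a witness of size $\lambda$, maximal by the promises built into $\Q_{\bar{\mathfrak{a}}}$. For the lower bound, the slaloms added by $\BbS_{\bar{\mathfrak{a}}}$-coordinates yield $\add(\mathcal{N}) = \cof(\mathcal{N}) = |\Phi|$ in the extension by Bartoszy\'nski's characterization, and a ZFC inequality in each case places $|\Phi|$ below $\bar{\mathfrak{a}}$: for $\bar{\mathfrak{a}} \in \{\mathfrak{a}_g,\mathfrak{a}_p,\mathfrak{a}_e\}$ via $\add(\mathcal{N}) \leq \non(\mathcal{M}) \leq \bar{\mathfrak{a}}$ (the second inequality from~\cite{brspzh00} and its analogues), and for $\bar{\mathfrak{a}} = \mathfrak{a}$ via $\add(\mathcal{N}) \leq \mathfrak{b} \leq \mathfrak{a}$. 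Finally, to exclude witnesses of intermediate cardinalities $\mu$ with $|\Phi| \leq \mu < \lambda$, I run the isomorphism-of-names argument of~\cite{brendle03} essentially verbatim; CH in the ground model and the symmetric combinatorial structure shared by all four $\Q_{\bar{\mathfrak{a}}}$ are exactly what make it go through. The conclusion $\cof(\bar{\mathfrak{a}}) = \omega$ is then immediate from $\bar{\mathfrak{a}} = \lambda$ and the hypothesis on $\lambda$.
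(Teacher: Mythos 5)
Your proposal follows essentially the same route as the paper: $\Q_{\mathfrak{a}_g}$ is the poset of Definition~\ref{d.adding_mcg}, $\Q_{\mathfrak{a}_p}$ and $\Q_{\mathfrak{a}_e}$ are its variants restricted to side conditions of the form $ab^{-1}$ (with injectivity of the finite partial functions dropped for $\mathfrak{a}_e$), $\Q_{\mathfrak{a}}$ is the poset $\D_A$ from \S 3, and the upper bound, lower bound and isomorphism-of-names steps are exactly those of \S\S 4--6. The only divergence is that for $\bar{\mathfrak{a}}=\mathfrak{a}$ the paper takes $\BbS_{\mathfrak{a}}$ to be Hechler forcing (reproducing Brendle's original argument via $\mathfrak{b}\leq\mathfrak{a}$), whereas you keep localization for all four cases and use $\add(\mathcal{N})\leq\mathfrak{b}\leq\mathfrak{a}$; both choices work.
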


The most interesting case is the maximal cofinitary groups case. In fact, for each $\bar{\mathfrak a}\in\{\mathfrak a_p,\mathfrak a_e\}$, the forcing notion $\Q_{\bar{\mathfrak{a}}}$ is closely related to the forcing notion for adding a maximal cofinitary group of arbitrary cardinality, presented in section \S 2.

\medskip

{\it Organization of the paper.} In \S 2, we introduce and study a forcing notion $\Q_{A,\rho}$ for adding a maximal cofinitary group with a generating set indexed by some given uncountable set $A$. In \S 3, we introduce the classes of good $\sigma$-Suslin forcing notions and finite function posets with the strong embedding properties. We define the template iteration $\P(\mathcal{T},\Q,\BbS)$ of arbitrary representatives $\BbS$ and $\Q$ of the above two classes respectively, along a given template $\mathcal{T}$ and show that $\P(\mathcal{T},\Q,\BbS)$ is a forcing notion. In \S 4, we establish some basic combinatorial properties of this generalized iteration. Theorem \ref{mainthm1} is proved in \S 5, and Theorem \ref{mainthm2} is proved in \S 6.

\section{A generalization of Zhang's forcing}

In~\cite{zhang1}, Zhang introduced a ccc forcing $\mathbb G_{H}$, where $H$ is a given cofinitary group in the ground model, such that forcing with $\mathbb G_H$ adds a permutation $f\in S_\infty$ such that the group $\langle H,f\rangle$ generated by $H$ and $f$ is cofinitary.

In this section we introduce a generalization of Zhang's forcing which adds, in one step, a cofinitary group of size $\kappa$ to the generic extension. While the results immediately obtained by doing this also could be achieved by an iteration of Zhang's forcing (see e.g. \cite{zhang1}), the template forcing we develop in the next section relies crucially on the forcing notion we define here.

We begin by giving several basic definitions and fixing notation.

\begin{definition}
1.  Let $A$ be a set. We denote by $W_A$ the set of reduced words in the alphabet $\langle a^{i}:a\in A, i\in\{-1,1\}\rangle$. The {\it free group} on generator set $A$ is the group $\F_A$ we obtain by giving $W_A$ the obvious concatenate-and-reduce operation. When $A=\emptyset$ then $\F_A$ is by definition the trivial group. Note that $A$ can be naturally identified with a subset of $\F_A$ which generates $\F_A$, and every function $\rho:B\to G$, where $G$ is any group, extends to a group homomorphism $\hat\rho: \F_B\to G$.

2. We denote by $\What{A}$ the set of all $w\in W_A$ such that either $w=a^n$ for some $a\in A$ and $n\in\Z\setminus\{0\}$, or $w$ starts and ends with a different letter. In the latter case, this means that there is $u\in W_A$, $a,b\in A$, $a\neq b$, and $i,j\in\{-1,1\}$ such that $w=a^{i}ub^{j}$ {\it without cancelation}. Note that any word $w\in W_A$ can be written as $w=u^{-1}w'u$ for some $w'\in \What{A}$ and $u\in W_A$.

3. For a (partial) function $f:\omega\to\omega$, let
$$
\fix(f)=\{n\in\omega:f(n)=n\}.
$$
We denote by $\cofin(S_\infty)$ set of cofinitary permutations in $S_\infty$, i.e. permutations $\sigma\in S_\infty$ such that $\fix(\sigma)$ is finite.

4. For a group $G$, a {\it cofinitary representation} of $G$ is a homomorphism $\varphi:G\to S_\infty$ such that $\im(\varphi)\subseteq \{I\}\cup\cofin(S_\infty)$. If $B$ is a set and $\rho:B\to S_\infty$ we say that $\rho$ {\it induces} a cofinitary representation of $\F_B$ if the canonical extension of $\rho$ to a homomorphism $\hat\rho:\F_B\to S_\infty$ is a cofinitary representation of $\F_B$.
\end{definition}

5. Let $A$ be a set and let $s\subseteq A\times\omega\times\omega$. For $a\in A$, let
$$
s_a=\{(n,m)\in\omega\times\omega: (a,n,m)\in s\}.
$$
For a word $w\in W_A$, define the relation $e_w[s]\subseteq \omega\times\omega$ recursively by stipulating that for $a\in A$, if $w=a$ then $(n,m)\in e_w[s]$ iff $(n,m)\in s_a$, if $w=a^{-1}$ then $(n,m)\in e_w[s]$ iff $(m,n)\in s_a$, and if $w=a^i u$ for some word $u\in W_A$ and $i\in\{1,-1\}$ without cancellation then
$$
(n,m)\in e_w[s]\iff (\exists k) e_{a^i}[s](k,m)\wedge e_u[s](n,k).
$$
If $s_a$ is a partial injection defined on a subset of $\omega$ for all $a\in A$, then $e_w[s]$ is always a partial injection defined on some subset of $\omega$, and we call $e_w[s]$ the {\it evaluation} of $w$ given $s$. By definition, let $e_{\emptyset}[s,\rho]$ be the identity in $S_\infty$.

6. If $s\subseteq A\times\omega\times\omega$ is such that $s_a$ is always a partial injection, and $w\in\ W_A$, then we will write $e_w[s](n)\halts$ when $n\in\dom(e_w[s])$, and $e_w[s](n)\undef$ when $n\notin \dom(e_w[s])$.

7. Finally, let $A$ and $B$ be disjoint sets and let $\rho:B\to S_\infty$ be a function. For a word $w\in W_{A\cup B}$ and $s\subseteq A\times\omega\times\omega$, we define
$$
(n,m)\in e_w[s,\rho]\iff (n,m)\in e_w[s\cup \{(b,k,l):\rho(b)(k)=l\}].
$$
If $s_a$ always is a partial injection for $a\in A$, then $e_w[s,\rho]$ is also a partial injection, and we call it the {\it evaluation} of $w$ given $s$ and $\rho$. The notations $e_w[s,\rho]\halts$ and $e_w[s,\rho]\undef$ are defined as before.

\medskip

The following lemma is obvious from the definitions. It will be used again and again, often without explicit mention.

\begin{lemma}\label{nocancel}
Fix sets $A$ and $B$ such that $A\cap B=\emptyset$, and a function $\rho:B\to S_\infty$. Let $w\in W_{A\cup B}$ and $s\subseteq A\times\omega\times\omega$ such that $s_a$ is a partial injection for all $a\in A$. Suppose $w=uv$ without cancellation for some $u,v\in W_{A\cup B}$. Then $n\in\dom(e_w[s,\rho])$ if and only if $n\in\dom(e_v[s,\rho])$ and $e_v[s,\rho](n)\in \dom(e_u[s,\rho])$. If moreover $w\in\What{A\cup B}$ then $n\in\fix(e_{w}[s,\rho])$ if and only $e_{v}[s,\rho](n)\in\fix(e_{vu}[s,\rho])$. In particular, $\fix(e_w[s,\rho])$ and $\fix(e_{vu}[s,\rho])$ have the same cardinality.
\end{lemma}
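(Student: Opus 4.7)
The plan is to prove the three assertions in order, with part~(1) handled by induction on the length of $u$ using the recursive definition in item~5 of the preceding definition, and parts~(2) and~(3) derived from part~(1) together with the injectivity of each $e_v[s,\rho]$.

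For the decomposition statement, I would fix $v$ and induct on $|u|$. The base case $|u|=1$, i.e.\ $u=a^{i}$ for some $a\in A\cup B$ and $i\in\{-1,1\}$, is exactly the recursive clause of the definition of $e_w[s,\rho]$: $(n,m)\in e_w[s,\rho]$ iff there is $k$ with $e_v[s,\rho](n,k)$ and $e_{a^{i}}[s,\rho](k,m)$, which rephrases as $n\in\dom(e_v[s,\rho])$ together with $e_v[s,\rho](n)\in\dom(e_{a^{i}}[s,\rho])$, and in that case $e_w[s,\rho](n)=e_u[s,\rho](e_v[s,\rho](n))$. For the inductive step, since $u$ is reduced one can write $u=a^{i}u'$ with $a^{i}u'$ having no cancellation; because $uv$ has no cancellation, the last letter of $u'$ coincides with the last letter of $u$, so $u'v$ also has no cancellation. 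Applying the definition to $w=a^{i}(u'v)$ and then the induction hypothesis to $u'v$ yields the conclusion for $w=uv$, and simultaneously identifies $e_w[s,\rho](n)$ with $e_u[s,\rho](e_v[s,\rho](n))$.

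For the fixed-point statement I would first observe that, because $w\in\What{A\cup B}$, the cyclic rotation $vu$ again has no cancellation: either $w=a^{n}$ in which case $u,v$ are both powers of $a$ and $vu=w$, or $w$ starts with some $a^{i}$ and ends with some $b^{j}$ with $a\neq b$, so that when both $u,v$ are nonempty, $vu$ begins with $b^{j}$ and ends with $a^{i}$ without cancellation at the joint (the boundary cases where $u$ or $v$ is empty are trivial). Applying part~(1) to $w=uv$ and to $vu$ respectively, the condition $n\in\fix(e_w[s,\rho])$ becomes $e_u[s,\rho](e_v[s,\rho](n))=n$, and the condition $e_v[s,\rho](n)\in\fix(e_{vu}[s,\rho])$ becomes $e_v[s,\rho](e_u[s,\rho](e_v[s,\rho](n)))=e_v[s,\rho](n)$; these are equivalent, the nontrivial direction using that $e_v[s,\rho]$ is a partial injection (being a composition of partial injections, since each $s_a$ is a partial injection and each $\rho(b)\in S_\infty$).

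Part~(3) is then immediate: by part~(2), $e_v[s,\rho]$ restricts to an injection $\fix(e_w[s,\rho])\to\fix(e_{vu}[s,\rho])$; applying the symmetric statement (swap the roles of $u$ and $v$, so $vu$ plays the role of $w$ and we split $vu=v\cdot u$) gives an injection in the other direction, yielding equality of cardinalities. The only place care is required is the injectivity step in part~(2); everything else is a bookkeeping unwinding of the recursive definition, which is why the authors call it obvious.
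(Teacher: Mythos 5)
The paper declines to prove this lemma, asserting only that it "is obvious from the definitions," so there is no proof in the paper to compare against; your argument correctly fills in the details. The structure is the natural one: the decomposition claim by induction on $|u|$ matching the recursive definition of $e_w$, the fixed-point equivalence from the decomposition claim plus injectivity of $e_v[s,\rho]$, and the cardinality claim by symmetry.

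One small slip in wording worth fixing: in establishing that $vu$ is reduced when $w\in\What{A\cup B}$ starts with $a^i$ and ends with $b^j$ with $a\neq b$, you write that ``$vu$ begins with $b^j$ and ends with $a^i$.'' This is not literally true: $vu$ begins with the first letter of $v$ (some interior letter of $w$) and ends with the last letter of $u$ (also interior). What is true, and what you actually need, is that \emph{at the joint} of $v$ and $u$ the last letter of $v$ is $b^j$ and the first letter of $u$ is $a^i$, and since $a\neq b$ there is no cancellation there; cancellation elsewhere is ruled out because $v$ and $u$ are individually reduced. Also, in the step where you prove $u'v$ is reduced, the reason the last letter of $u'$ agrees with that of $u$ is simply that $u'$ is a nonempty suffix of the reduced word $u$, not a consequence of $uv$ being reduced (that hypothesis is only needed to rule out cancellation at the $u'$-$v$ joint). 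These are cosmetic issues; the mathematics is sound.
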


\begin{remark}
Note that if $w=uv$ {\it with} cancelation, or $w\notin \What{A\cup B}$, the above lemma may fail.
\end{remark}

\begin{definition}\label{d.adding_mcg}
Fix sets $A$ and $B$ such that $A\cap B=\emptyset$ and a function $\rho:B\to S_\infty$ such that $\rho$ induces a cofinitary representation $\hat\rho:\F_B\to S_\infty$. We define the forcing notion $\Q_{A,\rho}$ as follows:
\begin{enumerate}[(1)]
\item Conditions of $\Q_{A,\rho}$ are pairs $(s,F)$ where $s\subseteq A\times\omega\times\omega$ is finite and $s_a$ is a finite injection for every $a\in A$, and $F\subseteq \What{A\cup B}$ is finite.

\item $(s,F)\leq_{\Q_{A,\rho}} (t,E)$ if and only if $s\supseteq t$, $F\supseteq E$ and for all $n\in\omega$ and $w\in E$, if $e_w[s,\rho](n)=n$ then already $e_w[t,\rho](n)\halts$ and $e_w[t,\rho](n)=n$.
\end{enumerate}
If $B=\emptyset$ then we write $\Q_A$ for $\Q_{A,\rho}$.
\end{definition}

\begin{remark}

When $A$, $B$ and $\rho:B\to S_\infty$ are clear from the context, we may write $\leq$ instead $\leq_{\Q_{A,\rho}}$.

\end{remark}

Unless otherwise stated, we now always assume that $A$ and $B$ are disjoint sets, $A\neq\emptyset$ and $\rho:B\to S_\infty$ induces a cofinitary representation of $\F_B$.

\begin{lemma}\label{ccc}
The poset $\Q_{A,\rho}$ has the Knaster property.
\end{lemma}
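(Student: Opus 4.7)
The plan is to verify the Knaster property by a standard $\Delta$-system argument that exploits the locality of evaluation: $e_w[s,\rho]$ depends only on the fibers $s_a$ for letters $a\in A$ that actually appear in $w$.

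Given an uncountable family $\{(s_\alpha,F_\alpha):\alpha<\omega_1\}\subseteq\Q_{A,\rho}$, I would first associate to each $\alpha$ the finite set $T_\alpha\subseteq A$ consisting of every $a\in A$ that either indexes a nonempty fiber of $s_\alpha$ or occurs as a letter in some word of $F_\alpha$. The $\Delta$-system lemma produces an uncountable $X_0\subseteq\omega_1$ and a finite root $R\subseteq A$ with $T_\alpha\cap T_\beta=R$ for distinct $\alpha,\beta\in X_0$. Since $R\times\omega\times\omega$ is countable, there are only countably many possible values of $s_\alpha\cap(R\times\omega\times\omega)$, so a pigeonhole step extracts an uncountable $X_1\subseteq X_0$ on which this restriction is constant.

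The second step is to show that any two distinct $\alpha,\beta\in X_1$ yield compatible conditions, with common lower bound $(s,F):=(s_\alpha\cup s_\beta,\,F_\alpha\cup F_\beta)$. That $(s,F)$ is itself a condition is routine: for each $a\in A$, $s_a$ equals $(s_\alpha)_a$, $(s_\beta)_a$, or (when $a\in R$) their common value by the pigeonhole, hence remains a finite injection. The key point is the second clause of $\leq_{\Q_{A,\rho}}$, where I would use the following observation. For any $w\in F_\alpha$, every $A$-letter of $w$ lies in $T_\alpha$; for such a letter $a$, either $a\in T_\alpha\setminus R$ and then $a\notin T_\beta$, so $(s_\beta)_a=\emptyset$, or $a\in R$ and $(s_\alpha)_a=(s_\beta)_a$. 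In either case $s_a=(s_\alpha)_a$, so $e_w[s,\rho]=e_w[s_\alpha,\rho]$ as partial functions; in particular no new fixed points can be introduced, and the condition on $w$ passes trivially. A symmetric argument handles $w\in F_\beta$, yielding $(s,F)\leq(s_\alpha,F_\alpha)$ and $(s,F)\leq(s_\beta,F_\beta)$.

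I do not foresee any real obstacle: the argument rests entirely on the locality of $e_w[s,\rho]$ in the letters of $w$, together with the usual $\Delta$-system plus pigeonhole normalization. The set $B$ and the representation $\rho$ are inert here, since $\rho(b)$ contributes the same partial function to every evaluation, independently of $\alpha$; and words in $\What{A\cup B}$ need not be further normalized on $R$, because the analysis above applies condition-by-condition, separately to each $F_\alpha$ and $F_\beta$.
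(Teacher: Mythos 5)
Your proof is correct and follows essentially the same route as the paper's: a $\Delta$-system on the letters involved, a pigeonhole step fixing the restriction to the root, and the observation that $e_w[s_\alpha\cup s_\beta,\rho]$ agrees with $e_w[s_\alpha,\rho]$ for $w\in F_\alpha$ because the evaluation only sees the fibers of letters occurring in $w$. Your version is if anything marginally cleaner, since you note the exact equality of the evaluations rather than merely that no new fixed points appear.
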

\begin{proof}
For $w\in W_{A\cup B}$, write $\oc(w)$ for the (finite) set of letters occurring in $w$, and for $F\subseteq W_{A\cup B}$ let $\oc(F)=\bigcup_{w\in F}\oc(w)$. For $C\subseteq A\cup B$ and $w$ and $F$ as before, let $\oc_C(w)=\oc(w)\cap C$ and $\oc_C(F)=\oc(F)\cap C$. For $s\subseteq A\times\omega\times\omega$
let $\dom(s_\alpha)=\{a: \exists n,m\in\omega (a,n,m)\in s\}$.

Suppose that $\langle (s^\alpha, F^\alpha)\in \Q_{A,\rho}: \alpha<\omega_1\rangle$ is a sequence of conditions. By applying the $\Delta$-system Lemma \cite[Theorem 1.5]{kunen80} repeatedly we may assume that there are $A_0,A_1\subseteq A$ finite and $t\subseteq A\times\omega\times\omega$ finite such that for all $\alpha\neq\beta$, $s^{\alpha}\cap s^{\beta}=t$, $\oc_A(F^\alpha)\cap\oc_A(F^\beta)=A_0$ and
$$
(\dom s^{\alpha}\cup\oc_A(F^\alpha))\cap(\dom s^{\beta}\cup\oc_A(F^\beta))=A_1.
$$
Note that $\dom(t)$ and $A_0$ are subsets of $A_1$. Further, we may assume that $s^{\alpha}\cap A_1\times\omega\times\omega=t$, since this must be true for uncountably many $\alpha$ as $A_1$ is finite. Note then that $(s^\alpha\cup s^\beta,F^\alpha\cup F^\beta)\in \Q_{A,\rho}$ and that if $\alpha\neq\beta$ then
\begin{equation}\label{ccceq}
s^\alpha\cap \oc(F^\beta)\times\omega\times\omega\subseteq t.
\end{equation}
We claim that $(s^\alpha\cup s^\beta,F^\alpha\cup F^\beta)\leq_{\Q_{A,\rho}} (s^\beta,F^\beta)$. For this, suppose that $w\in F^\beta$ and that $e_w[s^\alpha\cup s^\beta,\rho](n)=n$. Then by \ref{ccceq} we have $e_w[t\cup s^\beta,\rho](n)=n$ and so $e_w[s^\beta,\rho](n)=n$. The proof that $(s^\alpha\cup s^\beta,F^\alpha\cup F^\beta)\leq_{\Q_{A,\rho}} (s^\alpha,F^\alpha)$ is similar.
\end{proof}

Let $G$ be $\Q_{A,\rho}$ generic (over $V$, say.) We define $
\rho_G:A\cup B\to S_\infty$ by
\begin{equation}\label{rho_G}
\rho_G(x)=\left\{\begin{array}{ll} \rho(x) & \text{ if } x\in B\\
\bigcup\{s_x:(\exists F\in \What{A\cup B})\ (s,F)\in G\} & \text{ if } x\in A.
\end{array}\right.
\end{equation}
We will see that $\rho_G$ induces a cofinitary representation of $A\cup B$ which extends $\rho$.
Of course, we first need to check that when $G$ is generic then
$$
\bigcup\{s_x:(\exists F\in \What{A\cup B})\ (s,F)\in G\}
$$
is a permutation. This is the content of the next Lemma, which is parallel to \cite[Lemma 2.2]{zhang1}.

\begin{lemma}\label{extension}
Let $A$ and $B$ be disjoint sets and $\rho:B\to S_\infty$ a function inducing a cofinitary representation of $\F_B$. Then

1. (``Domain extension'') For any $(s,F)\in \Q_{A,\rho}$, $a\in A$ and $n\in\omega$ such that $n\notin\dom(s_a)$ there are cofinitely many $m\in\omega$ s.t. $(s\cup\{(a,n,m)\},F)\leq (s,F)$.

2. (``Range extension'') For any $(s,F)\in \Q_{A,\rho}$, $a\in A$ and $m\in\omega$ such that $m\notin\ran(s_a)$ there are cofinitely many $n\in\omega$ s.t. $(s\cup\{(a,n,m)\},F)\leq (s,F)$.
\end{lemma}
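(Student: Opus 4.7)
The plan is to prove part 1 (domain extension) directly, and to deduce part 2 (range extension) by the symmetric argument that swaps the roles of $n$ and $m$. For part 1, the first constraint is that $s_a\cup\{(n,m)\}$ remain a partial injection, which (since $n\notin\dom(s_a)$) is equivalent to $m\notin\ran(s_a)$, and this excludes only finitely many $m$. It then suffices to show that, for each $w\in F$, the set of $m$ for which $e_w[s\cup\{(a,n,m)\},\rho]$ acquires a fixed point not already a fixed point of $e_w[s,\rho]$ is finite; since $F$ is finite, cofinitely many $m$ will work.

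Fix $w = v_\ell\cdots v_1\in F$. Any new fixed point $n'$ has an evaluation chain that uses the new pair $(a,n,m)$ at some non-empty set $S = \{j_{i_1}<\cdots<j_{i_r}\}$ of positions of $a^{\pm 1}$ in $w$, and there are only finitely many such patterns $S$. For a fixed $S$, the chain decomposes into segments $u_0,u_1,\ldots,u_r$ (before, between, and after the uses of the new pair), each evaluated using only $s$ and $\rho$; the ``closing up'' of the chain at $n'$ produces finitely many equations of the form $e_{u_p}[s,\rho](\beta_p) = \alpha_{p+1}$, where $\alpha_p,\beta_p\in\{n,m\}$ are determined by the sign $\epsilon_p\in\{+1,-1\}$ of $v_{j_{i_p}}$.

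I then case-split on these constraints. \emph{(i)} If some constraint directly pins $m$ via the partial injection $e_{u_p}[s,\rho]$ --- for instance $m = e_{u_p}(n)$ or $m = e_{u_p}^{-1}(n)$ --- then $m$ takes at most one value. \emph{(ii)} If some middle constraint has the form $e_{u_p}(m) = m$, then $m$ is a fixed point of $e_{u_p}[s,\rho]$; reducedness of $w$ prevents $u_p$ from being empty, and $e_{u_p}[s,\rho]$ is either of finite domain (when $u_p$ contains an $A$-letter) or equals the cofinitary permutation $\hat\rho(u_p)$ (when $u_p$ is a non-trivial reduced word in $W_B$), so it has only finitely many fixed points. \emph{(iii)} The remaining case, which reducedness of $w$ forces into $r\le 2$ with a very restricted sign pattern, collapses the system to the pair of equations $n' = e_{u_r u_0}[s,\rho](n')$ and $m = e_{u_0}[s,\rho](n')$.

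The main obstacle is case \emph{(iii)}: one must show that $u_r u_0$ does not reduce to the identity of $\F_{A\cup B}$, so that its evaluation has only finitely many fixed points. This is precisely where the hypothesis $w\in\What{A\cup B}$ is used. If $w = c^k$ is a power of a single generator, then either $c\neq a$ and $J = \emptyset$ (so no non-empty pattern $S$ exists), or $c = a$ and the signs $\epsilon_p$ are all equal, so case \emph{(iii)} does not arise. Otherwise $v_\ell$ and $v_1$ have distinct base letters, so $v_\ell v_1$ does not cancel, the outermost letters of $u_r u_0$ survive the free-group reduction, and $u_r u_0$ is non-trivial in $\F_{A\cup B}$; the same dichotomy as in case \emph{(ii)} then bounds the fixed points of $e_{u_r u_0}[s,\rho]$ by a finite number, and the side equation $m = e_{u_0}(n')$ pins $m$ to at most one value for each of the finitely many admissible $n'$. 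Taking the union over the finitely many patterns $S$ and over the finite set $F$ yields a finite set of bad $m$, completing part 1, and part 2 follows by the symmetric argument.
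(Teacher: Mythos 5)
Your argument takes a genuinely different route from the paper's. The paper first proves a stronger statement (Lemma~\ref{goodlemma}) for the restricted class of \emph{$a$-good} words by induction on the rank, and then deduces the general case by conjugating $w=uva^k$ to the $a$-good word $\bar w=va^ku$ and translating fixed points via Lemma~\ref{nocancel}. You instead analyse, for each $w\in F$, the evaluation chain of a putative new fixed point $n'$ directly, partitioning according to the (non-empty, finite) set $S$ of positions at which the fresh pair $(a,n,m)$ is invoked, and classifying the resulting system of constraints in the unknowns $m,n'$. Your approach avoids the auxiliary notion of $a$-good word and the conjugation step, but requires a more delicate combinatorial classification; the paper's approach isolates the induction cleanly but has to juggle the finite set $C$ in the statement of Lemma~\ref{goodlemma}.

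Two small points. First, your case \emph{(iii)} as written does not exactly describe the $r=1$ situation: with $r=1$ one boundary equation involves only $n,n'$ (and so pins $n'$ directly, e.g. $n'=e_{u_1}[s,\rho](n)$ when $v_{j_{i_1}}=a^{-1}$), and the other boundary equation then pins $m$; the reduction to the pair $n'=e_{u_ru_0}[s,\rho](n')$, $m=e_{u_0}[s,\rho](n')$ only occurs for $r=2$ with sign pattern $(a^{-1},a)$. This is harmless --- $r=1$ is in fact easier --- but the statement should be adjusted. Second, and worth flagging even though the paper's own argument suffers from the same issue: both your case \emph{(ii)} and case \emph{(iii)} invoke the dichotomy ``a nonempty reduced word $u\in W_B$ evaluates to the cofinitary permutation $\hat\rho(u)$'', which has only finitely many fixed points. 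This tacitly assumes $\hat\rho(u)\neq I$, i.e. that $\hat\rho$ is injective on $\F_B$. The paper's Definition~2.1.4 only demands $\im(\hat\rho)\subseteq\{I\}\cup\cofin(S_\infty)$, which permits a nontrivial kernel, and with $\hat\rho(u_p)=I$ allowed the argument (yours and the paper's) breaks down: one can cook up $w\in\What{A\cup B}$ and $\rho$ with $\ker\hat\rho\neq\{1\}$ for which every $m$ produces a new fixed point. So the lemma should be read with the (standard, and clearly intended) additional hypothesis that $\hat\rho$ is faithful; under that hypothesis your argument is correct.

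Finally, your part~2 ``by symmetry'' is in the same spirit as the paper's reduction, which implements the symmetry explicitly by passing to $\bar s$ (with $\bar s_a=s_a^{-1}$) and $\bar w$ (obtained from $w$ by interchanging $a$ and $a^{-1}$) and observing $e_{\bar w}[\bar s,\rho]=e_w[s,\rho]$; it would be cleaner to say this than to appeal to symmetry without spelling it out.
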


We will first prove a slightly stronger version of this, but at first only for certain special ``good'' words.

\begin{definition}
Let $a\in A$ and $j\geq 1$. A word $w\in W_{A\cup B}$ is called \emph{$a$-good} of \emph{rank} $j$ if it has the form
\begin{equation}
w=a^{k_j}u_j a^{k_{j-1}}u_{j-1}\cdots a^{k_1} u_1
\end{equation}
where $u_i\in W_{A\setminus\{a\}\cup B}\setminus\{\emptyset\}$ and $k_i\in\Z\setminus\{0\}$, for $1\leq i\leq j$.
\end{definition}

\begin{lemma}\label{goodlemma}
Let $s\subseteq A\times\omega\times\omega$ be finite such that $s_a$ is a partial injection for all $a\in A$. Fix $a\in A$, and let $w\in W_{A\cup B}$ be $a$-good. Then for any $n\in\omega\setminus\dom(s_a)$ and $C\subseteq \omega$ finite there are cofinitely many $m\in\omega$ such that
$$
(\forall l\in\omega) e_w[s\cup\{(a,n,m)\},\rho](l)\in C\iff e_w[s,\rho](l)\halts \wedge\ e_w[s,\rho](l)\in C
$$
\end{lemma}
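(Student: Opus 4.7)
The plan is to prove this by induction on the rank $j$ of $w$, treating the two directions of the biconditional separately. The $(\Leftarrow)$ direction is essentially free: provided $m\notin\ran(s_a)\cup\{n\}$, the extension $s\cup\{(a,n,m)\}$ preserves the partial-injection property of the $a$-component, so by Lemma~\ref{nocancel} every successful evaluation using $s$ and $\rho$ lifts verbatim to the extension with the same value. This excludes only finitely many $m$. The substance is in the $(\Rightarrow)$ direction, where the key observation is that the new pair $(a,n,m)$ can be invoked only by an application of $a$ starting from the value $n$ (yielding $m$), or by an application of $a^{-1}$ starting from the value $m$ (yielding $n$); if the pair is never invoked during the evaluation, the new evaluation agrees with the old.

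In the base case $j=1$, write $w=a^{k_1}u_1$ and set $l':=e_{u_1}[s,\rho](l)$; this value is unaffected by the extension since $u_1\in W_{A\setminus\{a\}\cup B}$. The evaluation of $w$ at $l$ then reduces to iterating $a^{k_1}$ on $l'$ using the extended relation $s_a\cup\{(n,m)\}$. Choose $m$ outside the finite set $\dom(s_a)\cup\ran(s_a)\cup\{n\}\cup C$. Then any intermediate use of the new pair during the iteration produces a value ($m$ when $k_1>0$, $n$ when $k_1<0$) that has no further $a$-image or $a^{-1}$-preimage available in the extended relation, so the iteration cannot proceed any further. Hence the new pair can be invoked only at the very last step, forcing the final output to equal $m$ or $n$; but these lie outside $C$ by the choice of $m$, contradicting $e_w[s\cup\{(a,n,m)\},\rho](l)\in C$. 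So any new evaluation landing in $C$ must coincide with the old one, giving the conclusion.

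For the inductive step, write $w=a^{k_j}u_j w'$ with $w'$ $a$-good of rank $j-1$. Define $C^*\subseteq\omega$ as the set of intermediate values $t$ such that applying $u_j$ and then $a^{k_j}$ to $t$ using $s,\rho$, with at most one new-pair transition as in the base-case analysis, lands in $C$; finiteness of $C^*$ follows from the finiteness of $s$ and $|k_j|$. Apply the inductive hypothesis to $w'$ and $C^*$ to obtain cofinitely many $m$ for which the biconditional holds at the $w'$-level, and combine this with the base-case-style analysis of the top layer $a^{k_j}u_j$ to extract the conclusion for $w$ and $C$. The main obstacle is the bookkeeping in this last step: the set $C^*$ must be large enough to capture every intermediate value from which the top layer can reach $C$ via any legal new-pair invocation, yet small enough to remain finite, and one must correctly pair each new-evaluation path through the top layer with an old counterpart. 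Finiteness of $s$, the fact that $\rho$ and the word $w$ are fixed in advance, and the fact that the new pair can be used at most once in the top layer (by the base-case argument) make this bookkeeping manageable.
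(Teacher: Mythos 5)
Your base case is sound and matches the paper's, with one streamlining observation: excluding $m \in \dom(s_a)\cup\ran(s_a)\cup\{n\}\cup C$ handles both the $k_1>0$ and $k_1<0$ subcases uniformly (for $k_1<0$ the backward chain never reaches $m$ since $m\notin\dom(s_a)$). The $(\Leftarrow)$ direction is also correctly dismissed as monotonicity of evaluation under extension of $s$.

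The genuine gap is in your inductive step, and you half-flag it yourself as ``the main obstacle.'' The set $C^*$ you describe — intermediate values $t$ from which the top layer $a^{k_j}u_j$ reaches $C$ ``with at most one new-pair transition'' — is not a well-posed $m$-independent set. If the new-pair transition $n\mapsto m$ is allowed to contribute, then $C^*$ depends on $m$, but $m$ is precisely the thing being chosen (cofinitely) afterward, so you cannot hand $C^*$ to the inductive hypothesis. If instead you intend the wildcard reading (any $m$), the resulting set is either not finite or no longer tracks landing in $C$. Moreover, even if you could form such a set, applying the inductive hypothesis to $\bar w$ with $C^*$ only tells you that $e_{\bar w}[s,\rho](l)$ lies in $C^*$; you still need the stronger fact that the top layer, evaluated with $s$ alone, carries this value into $C$, and membership in the inflated $C^*$ does not give this.

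The clean fix — which is what the paper does — is to split the roles rather than merge them into one set. Take $C'=e_{u_j^{-1}a^{-k_j}}[s,\rho](C)$, the preimage of $C$ under the plain (no new pair) evaluation of the top layer. Apply the inductive hypothesis to $\bar w$ with $C'$ to get a cofinite $I_0$, and separately apply the rank-$1$ case to the single block $a^{k_j}u_j$ with $C$ itself to get a cofinite $I_1$. For $m\in I_0\cap I_1$: if $e_w[s\cup\{(a,n,m)\},\rho](l)\in C$, then $v:=e_{\bar w}[s\cup\{(a,n,m)\},\rho](l)$ satisfies $e_{a^{k_j}u_j}[s\cup\{(a,n,m)\},\rho](v)\in C$; by $I_1$ this forces $e_{a^{k_j}u_j}[s,\rho](v)\in C$, i.e.\ $v\in C'$; by $I_0$ then $e_{\bar w}[s,\rho](l)\halts$ and lies in $C'$, hence equals $v$ by monotonicity, and $e_w[s,\rho](l)=e_{a^{k_j}u_j}[s,\rho](v)\in C$. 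Your ``pair each new-evaluation path with an old counterpart'' is exactly this two-part argument; it just needs to be carried out with two cofinite sets rather than one composite finite target set.
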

\begin{proof}
By induction on the rank $j$. Let $w$ be an $a$-good word of rank $1$,
$$
w=a^{k_1}u_1.
$$
Assume first $k_1>0$. Then pick $m\notin\dom(a)$ and $m\notin C$. Suppose $e_w[s\cup\{(a,n,m)\},\rho](l)\in C$ but $e_w[s,\rho](l)\undef$. Then there is some $0<i<k_1$ such that $e_{a^iu_1}[s,\rho](l)=n$. If $i<k_1-1$ then $e_{a^{i+1}u_1}[s\cup\{(a,n,m)\},\rho](l)\undef$, so we must have $i=k_1-1$. But then $e_w[s\cup\{(a,n,m)\},\rho](l)=m\notin C$, a contradiction.

Assume then $k_1<0$. Pick $m\notin\ran(e_{a^iu_1}[s,\rho])$ for all $k_1\leq i<0$. If $e_w[s\cup\{(a,n,m)\},\rho](l)\in C$ but $e_w[s,\rho](l)\undef$, then there is some $k_1<i<0$ such that $e_{a^iu_1}[s,\rho](l)\halts$ but $e_{a^{i-1}u_1}[s,\rho](l)\undef$. Since $e_{a^iu_1}[s,\rho](l)\neq m$, it follows that $e_{a^{i-1}u_1}[s\cup\{(a,n,m)\},\rho]\undef$, a contradiction.

Now let $w$ be $a$-good of rank $j>1$, and write $w=a^{k_j}u_j\bar w$, where $\bar w$ is $a$-good of rank $j-1$. Let $C'=e_{u_j^{-1}a^{-k_j}}[s,\rho](C)$. By the inductive assumption there is $I_0\subseteq\omega$ cofinite such that for all $m\in I_0$,
$$
(\forall l\in\omega) e_{\bar w}[s\cup\{(a,n,m)\},\rho](l)\in C'\iff e_{\bar w}[s,\rho](l)\halts \wedge\ e_{\bar w}[s,\rho](l)\in C'.
$$
Let $I_1\subseteq \omega$ be cofinite such that for all $m\in I_1$,
\begin{align*}
&(\forall l\in\omega) e_{a^{k_i}u_j}[s\cup\{(a,n,m)\},\rho](l)\in C\\&\iff e_{a^{k_i}u_j}[s,\rho](l)\halts \wedge\ e_{a^{k_i}u_j}[s,\rho](l)\in C.
\end{align*}
Then let $m\in I_1\cap I_0$, and suppose $e_w[s\cup\{(a,n,m)\},\rho](l)\in C$. Then $e_{\bar w}[s\cup\{(a,n,m)\},\rho](l)\in C'$ and so $e_{\bar w}[s,\rho](l)\in C'$. It follows that
$$
e_{a^{k_j}u_j}[s\cup\{(a,n,m)\},\rho](e_{\bar w}[s,\rho](l))\in C
$$
and so we have $e_{a^{k_j}u_j}[s,\rho](e_{\bar w}[s,\rho](l))=e_w[s,\rho](l)\in C$, as required.
\end{proof}

\begin{proof}[Proof of Lemma \ref{extension}]
(1) It suffices to prove this when $F=\{w\}$. Further, we may assume that $a$ occurs in $w$, since otherwise there is nothing to show.

If $w$ is $a$-good, then the statement follows from Lemma \ref{goodlemma}. If $w$ is not $a$-good, then write $w=uva^k$ (without cancellation), where $u\in W_{A\setminus\{a\}\cup B}$, $v$ is $a$-good, and $k\in\Z$. Let $\bar w=v a^k u$. Then $\bar w$ is $a$-good, and so there is $I\subseteq\omega$ cofinite such that
$$
(\forall m\in I)(s\cup\{(a,n,m)\},\{\bar w\})\leq_{\Q_{A,\rho}} (s,\{\bar w\}).
$$
We claim that $(s\cup\{(a,n,m)\},\{ w\})\leq (s,\{w\})$ when $m\in I$. Indeed, if $e_w[s\cup\{(a,n,m)\},\rho](l)=l$ then by Lemma \ref{nocancel} it follows that
$$
e_{\bar w}[s\cup\{(a,n,m)\},\rho](e_{va^k}[s\cup\{(a,n,m)\},\rho](l))=e_{va^k}[s\cup\{(a,n,m)\},\rho](l)
$$
and so
$$
e_{\bar w}[s,\rho](e_{va^k}[s\cup\{(a,n,m)\},\rho](l))=e_{va^k}[s\cup\{(a,n,m)\},\rho](l).
$$
Applying Lemma 2.2 once more, we get $e_w[s,\rho](l)=l$.

(2) Let $(s,F)\in\Q_{A,\rho}$, $a\in A$, and suppose $m_0\notin \ran(s_a)$. As above, we may assume that $F=\{w\}$. Define $\bar s\subseteq A\times\omega\times\omega$ by
$$
(x,n,m)\in \bar s\iff (x\neq a\wedge (x,n,m)\in s)\vee (x=a\wedge (x,m,n)\in s).
$$
Let $\bar w$ be the word in which every occurrence of $a$ is replaced with $a^{-1}$. Notice that $e_{\bar w}[\bar s,\rho]=e_w[s,\rho]$, and that $m_0\notin\dom(\bar s)$. By (1) above there are cofinitely many $n$ such that $(\bar s\cup\{(a,m_0,n)\},\{\bar w\})\leq (\bar s,\{\bar w\})$, and so for cofinitely many $n$ we have $(s\cup\{(a,n,m_0)\},\{w\})\leq (s,\{w\})$.
\end{proof}

The following easy consequence of Lemma \ref{extension} will be useful. We leave the proof to the reader.

\begin{corollary}\label{saturate}
Let $w\in W_{A\cup B}$, and let $A_0\subseteq A$ be the set of letters from $A$ occurring in $w$. For any condition $(s,F)\in\Q_{A,\rho}$ and finite sets $C_0,C_1\subseteq\omega$ there is $t\subseteq A_0\times\omega\times\omega$ such that $(t\cup s,F)\leq (s,F)$ and $\dom (e_w[s\cup t,\rho])\supset C_0$ and $\ran(e_w[s\cup t,\rho])\supset C_1$.
\end{corollary}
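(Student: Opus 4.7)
The plan is to construct $t$ by finitely many successive applications of Lemma \ref{extension}, adding one pair at a time. First I reduce to a one-sided statement: since $\ran(e_w[s',\rho]) = \dom(e_{w^{-1}}[s',\rho])$ and $w$ and $w^{-1}$ involve exactly the same letters from $A$, it suffices to secure the domain condition $C_0\subseteq\dom(e_w[s\cup t,\rho])$. A second pass of the same construction applied to $w^{-1}$, starting from the condition produced by the first pass, will then secure the range condition $C_1\subseteq \ran(e_w[s\cup t,\rho])$, and the domain condition survives further extension because enlarging the first coordinate of a condition only enlarges the graphs of the $e_w$'s (any witnessing computation still goes through).

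Write $w = w_k w_{k-1}\cdots w_1$ as a product of single letters $w_j = x_j^{\epsilon_j}$, and enumerate $C_0$. For each $n\in C_0$ in turn I trace the would-be orbit: set $m_0 = n$, and for $j = 1,\ldots, k$ attempt to compute $m_j = e_{w_j}[s\cup t,\rho](m_{j-1})$. If $x_j\in B$ nothing is needed, because $\rho(x_j)$ is total. If $x_j\in A_0$ and $m_{j-1}$ already lies in the relevant side of $(s\cup t)_{x_j}$ (domain if $\epsilon_j = 1$, range if $\epsilon_j = -1$), again nothing is needed and $m_j$ is read off. Otherwise I apply Lemma \ref{extension}(1) when $\epsilon_j = 1$, or Lemma \ref{extension}(2) when $\epsilon_j = -1$, to the current condition $(s\cup t, F)$, adjoining a single new pair $(x_j, m_{j-1}, m_j)$ (respectively $(x_j, m_j, m_{j-1})$) to $t$; the lemma supplies cofinitely many admissible values for the new partner $m_j$, so one may always be selected. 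Transitivity of $\leq_{\Q_{A,\rho}}$ preserves $(s\cup t, F)\leq (s, F)$ through each extension, and after at most $k$ new pairs I have $n\in\dom(e_w[s\cup t,\rho])$. Iterating over all finitely many $n\in C_0$, and then executing the symmetric pass for $C_1$, produces the desired $t\subseteq A_0\times\omega\times\omega$.

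The only point requiring some care — and it is handled entirely by the ``cofinitely many'' clause of Lemma \ref{extension} — is that at each step the newly chosen $m_j$ must simultaneously give a valid single-step extension and avoid the finitely many values already committed to in earlier steps (so as to keep each $(s\cup t)_{x_j}$ a partial injection and to prevent clashes further along the current or a later chain). Since every individual application of the lemma excludes only finitely many forbidden partners, such a choice is always available, so the inductive construction goes through without obstruction.
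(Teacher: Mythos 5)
Your proof is correct and takes the natural route: decompose $w$ into single letters, trace the orbit of each $n\in C_0$, and apply Lemma~\ref{extension} one step at a time whenever the partial computation stalls, always choosing the new value outside the finitely many forbidden ones supplied by the cofiniteness clause; the reduction of the range condition to the domain condition via $w^{-1}$ and the observation that domain membership is monotone in $s$ are exactly the right bookkeeping. The paper itself leaves the proof of this corollary to the reader, and the argument you give is the one implicitly intended, so there is nothing to contrast.
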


\begin{lemma}\label{forcinglemma}
Let $w\in \What{A\cup B}$ and suppose $(s,F)\forces_{\Q_{A,\rho}} e_w[\rho_G](n)=m$ for some $n,m\in\omega$. Then $e_w[s,\rho](n)\halts$ and $e_w[s,\rho](n)=m$.
\end{lemma}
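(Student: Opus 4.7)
The plan is proof by contradiction. Suppose $(s, F) \Vdash e_w[\rho_G](n) = m$ but $e_w[s, \rho](n) \neq m$ (either $e_w[s, \rho](n) \undef$, or it halts at some $m' \neq m$). The strategy is to construct an extension $(s', F') \leq (s, F)$ with $e_w[s', \rho](n)$ halting at some value $m'' \neq m$; then in any generic $G$ containing $(s', F')$ the finite chain of intermediate values witnessing $e_w[s', \rho](n) = m''$ propagates through $s'_a \subseteq \rho_G(a)$ to give $e_w[\rho_G](n) = m''$, contradicting the forcing hypothesis. As a first reduction, replace $(s, F)$ by $(s, F \cup \{w\})$, which is a legitimate condition since $w \in \widehat{W}_{A \cup B}$; so we may assume $w \in F$. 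If $e_w[s, \rho](n)$ halts at some $m' \neq m$, then $(s', F') = (s, F)$ already works via the propagation argument above.

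It remains to treat the case $e_w[s, \rho](n) \undef$. Write $w = c_k c_{k-1} \cdots c_1$ as a reduced word and attempt the chain $n_0 = n$, $n_i = e_{c_i}[s, \rho](n_{i-1})$. Letters $c_i$ coming from $B$ never obstruct the computation (since $\rho(b) \in S_\infty$ is total), so any obstruction occurs at a letter $c_i = a^{\pm 1}$ with $a \in A$ where $n_{i-1}$ lies outside $\dom(s_a)$ (respectively $\ran(s_a)$). At any such step, Lemma \ref{extension} lets us extend $s_a$ at $n_{i-1}$ to any of cofinitely many values while preserving the order $\leq_{\Q_{A, \rho}}$. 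Iterating through at most $k$ such extensions produces $(s^\ast, F) \leq (s, F)$ in which the full computation $e_w[s^\ast, \rho](n)$ halts at some value $m^\ast$.

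The delicate point, and the main obstacle, is to arrange that $m^\ast \neq m$. Let $i^*$ denote the index of the final extension step performed in the iteration above. By the choice of $i^*$, the subsequent computation from $n_{i^*}$ onward to $n_k = m^\ast$ proceeds without further extensions, and is therefore a composition of partial injections, hence itself injective on its domain. Consequently at most one value of $n_{i^*}$ can propagate forward to the target value $m$. Using the cofinitely many admissible alternatives supplied by Lemma \ref{extension} at step $i^*$, we replace our choice of $n_{i^*}$ by any other admissible value; if the resulting subsequent computation now requires further extensions, we perform them (again with cofinite flexibility at each step via Lemma \ref{extension}) and reapply the same argument at the new deepest extension step. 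Since $w$ has only finitely many letters, this recursion terminates after at most $k$ levels, ultimately yielding $(s', F) \leq (s, F)$ with $e_w[s', \rho](n) = m'' \neq m$, which gives the desired contradiction.
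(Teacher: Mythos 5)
Your strategy is close in spirit to the paper's (extend $s$ step by step and use the cofinite flexibility of Lemma~\ref{extension} to steer the resulting value away from $m$), but the pivotal injectivity claim has a genuine gap.

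You assert that the ``subsequent computation from $n_{i^*}$ onward to $n_k = m^*$ proceeds without further extensions, and is therefore a composition of partial injections, hence itself injective on its domain,'' and conclude that ``at most one value of $n_{i^*}$ can propagate forward to the target value $m$.'' The problem is that this treats the composition as a \emph{fixed} partial injection that you are merely feeding different inputs into. But if $c_{i^*}=a^{\pm1}$ and the letter $a$ reoccurs among $c_{i^*+1},\ldots,c_k$, then the later composition uses the newly added pair $(n_{i^*-1},n_{i^*})\in s_a$ (or its inverse); changing $n_{i^*}$ therefore changes the \emph{map} and not just the input, and the ``at most one value'' conclusion does not follow. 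Concretely, suppose that between $c_{i^*}=a$ and the next occurrence $c_j=a^{-1}$ there sits a subword $v$ over $B$ alone with $e_v[\rho]$ acting as the identity on $n_{i^*}$ (e.g.\ $n_{i^*}\in\fix(\hat\rho(v))$). Then the computation returns to $n_{i^*-1}$ at step $j$ \emph{regardless} of the chosen $n_{i^*}$, and everything downstream of step $j$ is independent of $n_{i^*}$ — the function $n_{i^*}\mapsto m^{*}$ is constant on a cofinite set, and no amount of ``cofinitely many admissible alternatives'' escapes $m$. Your recursion through ``further extensions'' does not catch this case because no further extension is needed.

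The paper's proof avoids exactly this pitfall by a different organization. It inducts on the number of occurrences of $A$-letters in $w$, writes $w=w_1w_0$ without cancellation with $w_0$ \emph{$a$-good} (Definition~2.5) and $a\notin\oc(w_1)$, and then uses Lemma~\ref{goodlemma} — which is precisely the refined version of Lemma~\ref{extension} for $a$-good words — to land $n_1:=e_{w_0}[s\cup s_1,\rho](n)$ outside the single bad value $e_{w_1}[s,\rho]^{-1}(m)$. Because $a$ does not occur in $w_1$, extending $s_a$ cannot disturb $e_{w_1}$, so the inductive hypothesis applied to $w_1$ gives $e_{w_1}[s,\rho](n_1)=m$ and produces the contradiction. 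In other words, the $a$-good decomposition is exactly what ensures that the ``post-extension'' evaluation is a fixed partial injection unaffected by the choice of extension, which is the fact you need and do not have. To repair your proof you would need to reprove something like Lemma~\ref{goodlemma} inline (tracking how the added pair $(n_{i^*-1},n_{i^*})$ can and cannot be re-entered by the remaining letters), at which point you have essentially recovered the paper's argument.

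A smaller remark: your reduction ``replace $(s,F)$ by $(s,F\cup\{w\})$'' is fine, but is not actually used later; what matters is that $(s,F)$ forces the value, which is a hypothesis independent of whether $w\in F$.
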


\begin{proof}
By induction on the number of letters from $A$ occurring in $w$. If no letter from $A$ occurs, the statement is vacuously true. So suppose now that the above is known to hold for words with at most $k$ letters from $A$ occurring, and let $w$ be a letter with $k+1$ letters from $A$ occurring. For a contradiction, assume that $e_w[s,\rho](n)\undef$, but $(s,F)\forces_{\Q_{A,\rho}} e_w[\rho_G](n)=m$. Then we may find $a\in A$ such that $w=ua^{i}v$ without cancellation, $i\in\{-1,1\}$, and $u,v\in W_{A\cup B}$ are (possibly empty) words, such that $e_v[s,\rho](n)\halts$ but $e_v[s,\rho](n)\notin\dom (e_{a^i}[s,\rho])$. The word $w$ can be written $w=w_1w_0$ without cancellation where $w_0$ is $a$-good and $a$ does not occur in $w_1$. Note that if $e_{w_1}[s,\rho]$ is not totally defined then $\dom(e_w[s,\rho])$ is finite. By repeatedly applying Lemma \ref{goodlemma} and Lemma \ref{extension} we can find $s_1\subseteq \{a\}\times\omega\times\omega$ finite such that $s\cup s_1$ satisfies $(s\cup s_1,F)\leq (s,F)$ and such that $e_{w_0}[s\cup s_1,\rho](n)\halts$ and $n_1 = e_{w_0}[s\cup s_1,\rho](n)\neq e_{w_1}[s,\rho]^{-1}(m)$ if it is defined. Since $(s,F)\forces_{\Q_{A,\rho}} e_w[\rho_{\dot G}](n)=m$ and $(s\cup s_1,F)\forces e_{w_0}[\rho_{\dot G}](n)=n_1$ we must have $(s\cup s_1,F)\forces e_{w_1}[\rho_{\dot G}](n_1)=m$. By the inductive assumption it follows that $e_{w_1}[s\cup s_1,\rho](n_1)=m$. Since $a$ does not occur in $w_1$ it follows that $e_{w_1}[s,\rho](n_1)=m$, contradicting the choice of $n_1$.
\end{proof}

\begin{prop}\label{p.rhoG}
Let $G$ be $\Q_{A,\rho}$-generic. Then $\rho_G$, defined in \ref{rho_G}, is a function $A\cup B\to S_\infty$ such that $\rho_G\restrict B=\rho$, and $\rho_G$ induces a cofinitary representation $\hat\rho_G:\F_{A\cup B}\to S_\infty$ satisfying $\hat\rho_G\restrict\F_B=\hat\rho$.
\end{prop}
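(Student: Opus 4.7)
The plan is to establish three things in turn: (i) for each $a\in A$, $\rho_G(a)$ is a bijection of $\omega$; (ii) $\rho_G\restrict B=\rho$ (immediate from the definition of $\rho_G$, so $\hat\rho_G\restrict\F_B=\hat\rho$ follows automatically from the universal property of $\F_B$); and (iii) for every non-identity reduced word $w\in W_{A\cup B}$, $e_w[\rho_G]$ has only finitely many fixed points.

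For (i), fix $a\in A$. For each $n\in\omega$ the set $D^{\dom}_{a,n}=\{(s,F):n\in\dom(s_a)\}$ is dense: if $n\notin\dom(s_a)$, part (1) of Lemma~\ref{extension} produces cofinitely many $m$ with $(s\cup\{(a,n,m)\},F)\leq(s,F)$. Similarly, part (2) of Lemma~\ref{extension} makes $D^{\ran}_{a,m}=\{(s,F):m\in\ran(s_a)\}$ dense. Genericity gives that $\rho_G(a)$ is total and surjective, and injectivity is immediate since each $s_a$ is required to be a partial injection and $G$ is a filter. Hence $\rho_G(a)\in S_\infty$.

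For (iii) I would first reduce to $w\in\What{A\cup B}$: any non-identity reduced $w$ can be written $w=u^{-1}w'u$ with $w'\in\What{A\cup B}$ non-trivial, and conjugation in $S_\infty$ gives a bijection $\fix(e_w[\rho_G])\leftrightarrow\fix(e_{w'}[\rho_G])$ (this is the permutation version of Lemma~\ref{nocancel}). If $w'$ uses only letters from $B$, then $e_{w'}[\rho_G]=\hat\rho(w')$ is cofinitary by hypothesis on $\rho$. Otherwise, the crucial density fact is that
\[
D_{w'}=\{(s,F)\in\Q_{A,\rho}:w'\in F\}
\]
is dense: given any $(s,F)$, the pair $(s,F\cup\{w'\})$ is a condition (since $w'\in\What{A\cup B}$), and $(s,F\cup\{w'\})\leq(s,F)$ because every clause of Definition~\ref{d.adding_mcg}(2) is trivially satisfied when $s$ is unchanged. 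Pick $(s_0,F_0)\in G$ with $w'\in F_0$.

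I would then show $\fix(e_{w'}[\rho_G])\subseteq\fix(e_{w'}[s_0,\rho])$, which is finite as $s_0$ is finite. For this, suppose $e_{w'}[\rho_G](n)=n$. The evaluation of $w'$ at $n$ involves only finitely many triples $(a,k,l)$, each witnessed by some condition in $G$; by directedness of $G$ we may find $(s,F)\in G$ with $(s,F)\leq(s_0,F_0)$ such that $e_{w'}[s,\rho](n)\halts$ and equals $n$ (this is also essentially Lemma~\ref{forcinglemma} applied to a condition in $G$ forcing $e_{w'}[\rho_G](n)=n$). Since $w'\in F_0$ and $(s,F)\leq(s_0,F_0)$, Definition~\ref{d.adding_mcg}(2) forces $e_{w'}[s_0,\rho](n)\halts$ and equals $n$, as claimed. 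The main point to be careful about is that without the density argument producing an $(s_0,F_0)\in G$ with $w'\in F_0$, Lemma~\ref{forcinglemma} alone would only give $\fix(e_{w'}[\rho_G])\subseteq\bigcup_{(s,F)\in G}\fix(e_{w'}[s,\rho])$, which need not be finite; the upper bound provided by the fixed witnessing condition $(s_0,F_0)$ is exactly what makes the argument work.
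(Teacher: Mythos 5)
Your proposal is correct and follows essentially the same approach as the paper: establish that $\rho_G(a)\in S_\infty$ via density of the domain/range sets from Lemma~\ref{extension}, reduce to $w'\in\What{A\cup B}$, use density of $D_{w'}$ to fix a witness $(s_0,F_0)\in G$ with $w'\in F_0$, and conclude $\fix(e_{w'}[\rho_G])\subseteq\fix(e_{w'}[s_0,\rho])$ via the extension relation. The only cosmetic difference is that you derive the key containment directly from directedness of $G$ rather than invoking the forcing theorem together with Lemma~\ref{forcinglemma} as the paper does, but you correctly note these are equivalent routes.
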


\begin{proof}
For each $a\in A$ and $n\in\omega$, let
$$
D_{a,n}=\{(s,F)\in\Q_{A,\rho}: (\exists m) (a,n,m)\in s\}
$$
and let
$$
R_{a,n}=\{(s,F)\in\Q_{A,\rho}: (\exists m) (a,m,n)\in s\}.
$$
For $w\in \What{A\cup B}$, let
$$
D_w=\{(s,F)\in\Q_{A,\rho}: w\in F\}.
$$
Then $D_w$ is easily seen to be dense, and $D_{a,n}$ and $R_{a,n}$ are dense by Lemma \ref{extension}. Thus $\rho_G$ is a function $A\cup B\to S_\infty$ as promised. It remains to prove that $\rho_G$ induces a cofinitary representation. For this let $w\in W_{A\cup B}$. Then we can find $w'\in \What{A\cup B}$ and $u\in W_{A\cup B}$ such that $w=u^{-1}w'u$. Since $D_{w'}$ is dense, there is some condition $(s,F)\in G$ such that $w'\in F$. Suppose then that $e_{w'}[\rho_G](n)=n$ in $V[G]$. Then there is some condition $(t,E)\leq_{\Q_{A,\rho}} (s,F)$ and $(t,E)\in G$ forcing this. It follows by Lemma \ref{forcinglemma} that $e_{w'}[t,\rho](n)=n$. But then by the definition of $\leq_{\Q_{A,\rho}}$ we have $e_{w'}[s,\rho](n)=n$, and so $\fix(e_{w'}[\rho_G])=\fix(e_{w'}[s,\rho])$, which is finite. Finally, $\fix(e_w[\rho_G])=e_{u}[\rho_G]^{-1}(\fix (e_{w'}[\rho_G]))$, so $\fix(e_w[\rho_G])$ is finite.
\end{proof}

Suppose $A=A_0\cup A_1$ where $A_0\cap A_1=\emptyset$ and $A_0,A_1\neq\emptyset$. We will now describe how forcing with $\Q_{A,\rho}$ over $V$ may be broken down into a two-step iteration, first forcing with $\Q_{A_0,\rho}$ over $V$, and then with $\Q_{A_1,\rho_G}$ over $V[G]$, when $G$ is $\Q_{A_0,\rho}$-generic over $V$.

\medskip

{\it Notation.} For $s\subseteq A\times \omega\times\omega$  and $A_0\subseteq A$, write $s\restrict A_0$ for $s\cap A_0\times\omega\times\omega$. For a condition $p=(s,F)\in\Q_{A,\rho}$ we will write $p\restrict A_0$ for $(s\restrict A_0, F)$, and $p\srestrict A_0$ (``strong restriction'') for $(s\restrict A_0, F\cap \What{A_0\cup B})$. (So $p\srestrict A_0$ is a condition of $\Q_{A_0,\rho}$ but $p\restrict A_0$ is in general still only a condition of $\Q_{A,\rho}$.)

For the notion of complete containment see section~\ref{complete_embeddings}.

\begin{lemma}\label{complete}
If $A_0\subseteq A$ then $\Q_{A_0,\rho}$ is completely contained in $\Q_{A,\rho}$.
\end{lemma}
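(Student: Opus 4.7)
My plan is to verify that the identity inclusion $\Q_{A_0,\rho}\hookrightarrow\Q_{A,\rho}$ is a complete embedding: it preserves the order and incompatibility, and every $p\in\Q_{A,\rho}$ has a reduction in $\Q_{A_0,\rho}$. Order-preservation is immediate since any $(s,F)\in\Q_{A_0,\rho}$ has $s\subseteq A_0\times\omega\times\omega\subseteq A\times\omega\times\omega$ and $F\subseteq\What{A_0\cup B}\subseteq\What{A\cup B}$, and for words $w\in\What{A_0\cup B}$ the evaluation $e_w[s,\rho]$ depends only on $s\restrict A_0$ and $\rho$; so the fixed-point side of the $\leq$ relation agrees in the two posets. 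Preservation of incompatibility follows similarly: if $(s_1,F_1),(s_2,F_2)\in\Q_{A_0,\rho}$ had a common extension $(u,H)\in\Q_{A,\rho}$, then $(u\restrict A_0,\,H\cap\What{A_0\cup B})$ is directly verified to be a common extension already inside $\Q_{A_0,\rho}$.

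The core step is exhibiting for each $p=(s,F)\in\Q_{A,\rho}$ a reduction $\pi(p)\in\Q_{A_0,\rho}$, i.e., one with the property that every $q\leq_{\Q_{A_0,\rho}}\pi(p)$ is compatible with $p$ in $\Q_{A,\rho}$. The naive candidate $p\srestrict A_0=(s\restrict A_0,\,F\cap\What{A_0\cup B})$ handles the words in $F\cap\What{A_0\cup B}$ but not the words $w\in F\setminus\What{A_0\cup B}$, which impose latent constraints on $A_0$-entries that an extension $q$ can still violate. My plan is to enrich $p\srestrict A_0$ by adjoining finitely many further $A_0$-entries to its $s$-part, chosen so as to preempt every such latent constraint. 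The structural observation making this finite is the following pinching: decomposing $w=v_k c_k^{j_k}\cdots c_1^{j_1}v_0$ with $c_i\in A\setminus A_0$ and $v_i\in W_{A_0\cup B}$, the finite partial injections $s_{c_i}$ pinch every evaluation thread of $e_w$ at the $c_i$-steps into one of finitely many intermediate sequences $(n_1,\ldots,n_k)$ with $n_i\in\dom(s_{c_i}^{j_i})$; along each such pinch the constraint that would realise a fixed point is a finite chain of $A_0$-evaluations, which one can break by committing one specific $A_0$-entry of $\pi(p)$. Lemma~\ref{extension} provides cofinitely many admissible values at each step, so all finitely many preempts can be installed simultaneously while preserving the injectivity of the $s_a$'s and avoiding new fixed points for the words in $F\cap\What{A_0\cup B}$.

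Finally one checks that $\pi(p)$ so constructed is a reduction. For $q\leq_{\Q_{A_0,\rho}}\pi(p)$, the candidate common extension $r=(s_q\cup s,\,F_q\cup F)\in\Q_{A,\rho}$ satisfies $r\leq q$ because words in $F_q\subseteq\What{A_0\cup B}$ have $e_w[r,\rho]=e_w[s_q,\rho]$, and satisfies $r\leq p$ because: (i) for words in $F\cap\What{A_0\cup B}\subseteq F_{\pi(p)}$ the control of new fixed points is exactly the $\Q_{A_0,\rho}$-order condition $q\leq\pi(p)$, while (ii) for $w\in F\setminus\What{A_0\cup B}$ any purported new fixed point of $e_w[r,\rho]$ would have to be realised along a pinch already preempted by the adjoined entries of $\pi(p)$. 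The main obstacle is the pinching bookkeeping in the construction of $\pi(p)$, which is where the combination of Lemma~\ref{extension} and Corollary~\ref{saturate} does the real work.
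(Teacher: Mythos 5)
Your proposal is correct and follows essentially the same route as the paper's proof: the reduction of $p=(s,F)$ is $p\srestrict A_0$ enriched, via Lemma~\ref{extension} and Corollary~\ref{saturate}, so that the $A_0\cup B$-blocks of each $w\in F\setminus\What{A_0\cup B}$ already cover the finite domains and ranges of the interleaved $(A\setminus A_0)$-blocks, whence any fixed point arising after a further $A_0$-extension is already a fixed point at the enriched condition and hence (since the enrichment extends $(s,F)$) at $(s,F)$ itself. The only wording to tighten is that the adjoined entries must \emph{decide} rather than \emph{break} each pinched constraint (pre-existing fixed points of $e_w[s,\rho]$ are permitted and cannot be removed), and the requirement $(s\cup t_0,F)\leq(s,F)$ must be secured for all of $F$, not only $F\cap\What{A_0\cup B}$ --- both of which Corollary~\ref{saturate} delivers.
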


\begin{proof}
Let $A_1=A\setminus A_0$. We may of course assume that $A_0,A_1\neq\emptyset$, since otherwise there is nothing to show. We first note that all $\Q_{A_0,\rho}$ conditions are also $\Q_{A,\rho}$ conditions, and so $\Q_{A_0,\rho}\subseteq \Q_{A,\rho}$. Clearly $p\leq_{\Q_{A_0,\rho}} q$ implies $p\leq_{\Q_{A,\rho}} q$. Moreover, if $p,q\in\Q_{A,\rho}$ and $p\leq_{\Q_{A,\rho}} q$ then clearly $p\srestrict A_0 \leq_{\Q_{A_0,\rho}} q\srestrict A_0$. Hence $p\perp^{\Q_{A_0,\rho}} q$ if and only if $p\perp^{\Q_{A,\rho}} q$. It remains to see that if $q\in\Q_{A,\rho}$ then there is $p_0\in\Q_{A_0,\rho}$ such that whenever $p\leq_{\Q_{A_0,\rho}} p_0$ then $p$ and $q$ are $\leq_{\Q_{A,\rho}}$-compatible. This follows from the next claim.

\begin{claim}\label{completeclaim}
For every $(s,F)\in\Q_{A,\rho}$ there is $t_0\supseteq s\restrict A_0$, $t_0\subseteq A_0\times\omega\times\omega$, such that if $(t,E)\leq_{\Q_{A_0,\rho}} (t_0,F\cap \What{A_0\cup B})$ then $(s\cup t, F)\leq_{\Q_{A,\rho}} (s,F)$. Thus, for any $q\in\Q_{A,\rho}$ there is $p_0\leq_{\Q_{A_0,\rho}} p\srestrict A_0$ such that whenever $p\leq_{\Q_{A_0,\rho}} p_0$ then $p$ is $\leq_{\Q_{A,\rho}}$-compatible with $q$.
\end{claim}

\noindent To see this, let $\{w_1,\ldots, w_n\}=F\setminus W_{A_0\cup B}$. Then each word $w_i$ may be written
$$
w_i=u_{i,k_i}v_{i,k_i}\cdots u_{i,1} v_{i,1} u_{i,0}
$$
where $u_{i,j}\in W_{A_0}$ and $v_{i,j}\in W_{A_1}$, all words are nonempty except possibly $u_{i,k_i}$ and $u_{i,0}$, and each $v_{i,j}$ starts and ends with a letter from $A_1$. By repeated applications of Corollary \ref{saturate} to $(s,F)$ and the $u_{i,j}$ we can find $t\subseteq A_0\times\omega\times\omega$ such that $t_0\supseteq s\restrict A_0$ and $\dom(e_{u_{i,j}}[s\cup t,\rho])\supseteq\ran(e_{v_{i,j}}[s,\rho])$ and $\ran(e_{u_{i,j}}[s\cup t_0,\rho]\supseteq \dom(e_{v_{i,j+1}}[s,\rho])$, and satisfying $(s\cup t_0,F)\leq_{\Q_{A,\rho}} (s,F)$. Suppose now $(t,E)\leq_{\Q_{A_0,\rho}} (t_0,F\cap \What{A_0\cup B})$. If $e_{w_i}[s\cup t,\rho](n)\halts$ for some $n\in\omega$, then by definition of $t_0$ we must have that $e_{w_i}[s\cup t_0,\rho](n)\halts$. Therefore if $e_{w_i}[s\cup t,\rho](n)=n$ we have $e_{w_i}[s\cup t_0,\rho](n)=n$, and so since $(s\cup t_0,F)\leq_{\Q_{A,\rho}} (s,F)$ it follows that $e_{w_i}[s,\rho](n)=n$. Thus $(s\cup t,F)\leq_{\Q_{A,\rho}} (s,F)$ as required.
\end{proof}

\begin{remark}\label{r.strong_embedding_mcg}
Note that in Claim~\ref{completeclaim} we obtained in fact a slightly stronger property than stated, namely the following.
Let $A\subseteq\dom(\Q)$, $p=(s,F)\in\Q$. Then there is $t_0\subseteq \oc(s)\cap A\times\omega\times\omega$ such that $s\restrict A\subseteq t_0$,
$\oc(t_0)=\oc(s)\cap A$, $(t_0, F\cap \What{A})\leq_{\Q_{\oc(p)\cap A}} p\srestrict A$ and whenever $(t,E)\leq_\Q (t_0,\F\cap \What{A})$ is such that
$\oc(t)\cap(\oc(p)\backslash A)=\oc(E)\cap (\oc(p)\backslash A)=\emptyset$, then $(t\cup s,F)\leq (s,F)$, $(t\cup s,E)\leq (t,E)$, and so
$(t\cup s, E\cup F)$ is a common extension of $(s,F)$ and $(t,E)$.
\end{remark}

\begin{lemma}\label{forcingcompatible}
Let $A=A_0 \cup A_1$. If $(t,E)\in\Q_{A_0,\rho}$ and
$$
(t,E)\forces_{\Q_{A_0,\rho}} (s_0,F_0)\leq_{\Q_{A_1,\rho_{\dot G}}} (s_1,F_1)
$$
then $(t\cup s_0,F_0)\leq_{\Q_{A,\rho}} (t\cup s_1,F_1)$.
\end{lemma}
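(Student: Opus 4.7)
My plan is to reduce the statement to a single requirement and then dispose of it by a direct comparison of evaluation trajectories, with the forcing hypothesis used only at the very end. First observe that both $(t\cup s_0,F_0)$ and $(t\cup s_1,F_1)$ are conditions of $\Q_{A,\rho}$: $t$ lives in $A_0\times\omega\times\omega$ and each $s_i$ in $A_1\times\omega\times\omega$, so the union is still a finite collection of partial injections, and $\What{A_1\cup B}\subseteq\What{A\cup B}$, so $F_i$ is allowed as the ``word'' component. The inclusions $t\cup s_0\supseteq t\cup s_1$ and $F_0\supseteq F_1$ are automatic from $s_0\supseteq s_1$ and $F_0\supseteq F_1$, which are absolute, ground-model statements already forced by $(t,E)$. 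All that remains is the fixpoint requirement:
\[
\text{for every }w\in F_1\text{ and }n\in\omega,\quad e_w[t\cup s_0,\rho](n)=n\ \Longrightarrow\ e_w[t\cup s_1,\rho](n)\halts\text{ and equals }n.
\]

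Fix such $w,n$. Since $s_1\subseteq s_0$, an easy induction on word length shows $e_w[t\cup s_1,\rho]\subseteq e_w[t\cup s_0,\rho]$ as partial injections, so if $e_w[t\cup s_1,\rho](n)\halts$ it must equal $n$. Assume for contradiction that it is undefined. I compare the right-to-left evaluation trajectories of $e_w[t\cup s_0,\rho](n)$ and $e_w[t\cup s_1,\rho](n)$: at every $A_0$-letter both use the partial function $t$; at every $B$-letter both use $\rho$; at every $A_1$-letter they use $s_0$ and $s_1$ respectively, and these agree wherever $s_1$ is defined. Induction on the step number then shows that the two trajectories coincide up through the last step at which the $s_1$-trajectory is still defined, reaching a common value $k$, and that the first failure of the $s_1$-trajectory must occur at the next letter $a^\epsilon$ with $a\in A_1$ and $k\notin\dom((s_1)_a^\epsilon)$. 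Indeed, a $B$-letter transition is always defined via $\rho$, and an $A_0$-letter transition uses $t$ in both trajectories, so the $s_1$-trajectory could only fail there if the $s_0$-trajectory failed too---contradicting $e_w[t\cup s_0,\rho](n)=n$.

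Now pass to an arbitrary $\Q_{A_0,\rho}$-generic $G\ni(t,E)$. By~\eqref{rho_G} we have $\rho_G(a)\supseteq t_a$ for every $a\in A_0$ and $\rho_G\restrict B=\rho$, so the atomic data of $e_w[s_0,\rho_G]$ (computed in $V[G]$) extends that of $e_w[t\cup s_0,\rho]$; by monotonicity of $e_w$ in the atomic data we obtain $e_w[s_0,\rho_G](n)=n$ in $V[G]$. The hypothesis $(t,E)\forces_{\Q_{A_0,\rho}}(s_0,F_0)\leq_{\Q_{A_1,\rho_{\dot G}}}(s_1,F_1)$ together with $w\in F_1$ then yields $e_w[s_1,\rho_G](n)=n$ in $V[G]$. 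However, replaying the trajectory comparison once more---this time between $e_w[t\cup s_1,\rho]$ in $V$ and $e_w[s_1,\rho_G]$ in $V[G]$, noting that the atomic data of the latter again extends that of the former---the $V[G]$-trajectory reaches the same value $k$ at the same position and must then apply the ground-model partial function $(s_1)_a^\epsilon$ to $k$; this lookup is undefined in $V$ and therefore remains undefined in $V[G]$, so $e_w[s_1,\rho_G](n)\undef$, a contradiction.

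The only genuinely non-routine observation is the ``pinning'' in the second paragraph: a $V$-failure of $e_w[t\cup s_1,\rho]$ can only occur at an $A_1$-letter, because the two trajectories are literally identical at $A_0$- and $B$-letters. Once this is seen, the forcing enters only to propagate the fixpoint-preservation statement across $\rho$ and $\rho_G$, and the ground-model nature of $s_1$ immediately transports the $V$-failure into $V[G]$ to yield the contradiction. I do not foresee other obstacles.
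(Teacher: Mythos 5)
Your proof is correct and follows essentially the same route as the paper's: pass to a generic $G\ni(t,E)$, transfer $e_w[t\cup s_0,\rho](n)=n$ to $e_w[s_0,\rho_G](n)=n$ by monotonicity of the evaluation in the atomic data, invoke the forced inequality to get $e_w[s_1,\rho_G](n)=n$, and pull this back to $e_w[t\cup s_1,\rho](n)=n$ in $V$. The only difference is that you carry out the final pull-back (which the paper asserts in a single clause) in contrapositive form via the trajectory comparison pinning any failure at an $A_1$-letter; this is a legitimate filling-in of detail, not a different method.
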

\begin{proof}
Let $w\in F_1$ and suppose $e_w[t\cup s_0,\rho](n)=n$. If $G$ is $\Q_{A_0,\rho}$-generic such that $(t,E)\in G$ then in $V[G]$ we have $e_w[s_0,\rho_G](n)=n$, and so in $V[G]$ we have $e_w[s_1,\rho_G](n)=n$, from which it follows that $e_w[t\cup s_1,\rho](n)=n$.
\end{proof}

\begin{lemma}\label{twostep}
Suppose $G$ is $\Q_{A,\rho}$-generic over $V$, and $A=A_0\cup A_1$ where $A_0,A_1\neq\emptyset$, $A_0\cap A_1=\emptyset$. Then $H=G\cap\Q_{A_0,\rho}$ is $\Q_{A_0,\rho}$-generic over $V$ and
$$
K=\{p\restrict A_1: p\in G\}=\{(s\restrict A_1,F): (s,F)\in G\}
$$
is $\Q_{A_1,\rho_H}$-generic over $V[H]$. Moreover, $\rho_G=(\rho_H)_K$.
\end{lemma}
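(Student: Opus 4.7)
The plan is to verify the three assertions in order: (i) $H$ is $\Q_{A_0,\rho}$-generic over $V$, (ii) $K$ is a $\Q_{A_1,\rho_H}$-generic filter over $V[H]$, and (iii) $\rho_G=(\rho_H)_K$. Claim (i) is a direct consequence of Lemma~\ref{complete} via the standard fact that generic filters restrict to generic filters through complete embeddings: given a dense $D\subseteq\Q_{A_0,\rho}$ in $V$, Claim~\ref{completeclaim} produces for every $q\in\Q_{A,\rho}$ a reduction in $\Q_{A_0,\rho}$ all of whose extensions are $\Q_{A,\rho}$-compatible with $q$, so the set of $\Q_{A,\rho}$-conditions below $q$ extending some member of $D$ is dense, and genericity of $G$ yields $H\cap D\neq\emptyset$.

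For (ii), first observe that each $(s\restrict A_1,F)$ with $(s,F)\in G$ is a legitimate $\Q_{A_1,\rho_H}$-condition, since $F\subseteq\What{A\cup B}=\What{A_1\cup(A_0\cup B)}$ and $\rho_H$ induces a cofinitary representation of $\F_{A_0\cup B}$ by Proposition~\ref{p.rhoG} applied to $\Q_{A_0,\rho}$ and $H$. Closure of $K$ under meets and upward closure then reduces to the corresponding properties of $G$: if $(s'',F'')\in G$ is a common $\Q_{A,\rho}$-extension of $(s,F),(s',F')\in G$ and $w\in F$, then $e_w[s''\restrict A_1,\rho_H](n)=n$ in $V[H]$ is the same statement as $e_w[\rho_G](n)=n$, which by Lemma~\ref{forcinglemma} was already witnessed by $e_w[s,\rho](n)=n$, so $(s''\restrict A_1,F'')\leq_{\Q_{A_1,\rho_H}}(s\restrict A_1,F)$, and similarly for $(s',F')$; upward closure is analogous.

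The main step is genericity of $K$. Let $\dot D$ be a $\Q_{A_0,\rho}$-name with $p_0\in H$ forcing that $\dot D$ is dense open in $\Q_{A_1,\rho_{\dot H}}$ (passing to a dense open subset if necessary). Set
$$
D^*=\{(s,F)\in\Q_{A,\rho}:(s,F)\leq p_0\text{ and }(s\srestrict A_0)\forces_{\Q_{A_0,\rho}}(s\restrict A_1,F)\in\dot D\}.
$$
I claim $D^*$ is dense below $p_0$ in $\Q_{A,\rho}$. Given $q=(s_q,F_q)\leq p_0$, apply the strong reduction of Remark~\ref{r.strong_embedding_mcg} with $A_0$ in place of $A$ to obtain $t_0\in\Q_{A_0,\rho}$ with $(t_0,F_q\cap\What{A_0\cup B})\leq q\srestrict A_0$ and enjoying the amalgamation clause stated there. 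In the $\Q_{A_0,\rho}$-extension below $t_0$, density of $\dot D$ produces an extension of $(s_q\restrict A_1,F_q)$ in $\dot D$; by the forcing theorem and the maximal principle we obtain $t_1=(u_1,E_1)\leq_{\Q_{A_0,\rho}}t_0$ together with a ground-model pair $(s_1,F_1)$, $s_1\subseteq A_1\times\omega\times\omega$, such that $t_1\forces_{\Q_{A_0,\rho}}(s_1,F_1)\leq(s_q\restrict A_1,F_q)\wedge(s_1,F_1)\in\dot D$. Lemma~\ref{forcingcompatible} then gives $(u_1\cup s_1,F_1)\leq_{\Q_{A,\rho}}(u_1\cup s_q\restrict A_1,F_q)$, and the amalgamation clause of Remark~\ref{r.strong_embedding_mcg} gives $(u_1\cup s_q,F_q)\leq_{\Q_{A,\rho}}q$; putting these together (and absorbing $E_1$-words into the $F$-part) produces a common extension of $q$ lying in $D^*$. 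Genericity of $G$ then delivers $(s,F)\in G\cap D^*$, so $(s\restrict A_1,F)\in K\cap\dot D[H]$.

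Finally, $\rho_G=(\rho_H)_K$ is verified pointwise: on $B$ both agree with $\rho$; on $A_1$ both equal $\bigcup\{s_a:(s,F)\in K\}$ directly from the definition of $K$; and on $A_0$ both reduce to $\bigcup\{s_a:(s,F)\in H\}$, the nontrivial direction using genericity of $H$ applied to the domain-extension dense sets $D_{a,n}$ of Lemma~\ref{extension} in $\Q_{A_0,\rho}$ to absorb any $(a,n,m)\in s$ with $(s,F)\in G$ and $a\in A_0$ into some element of $H$. The main obstacle is the density of $D^*$: one must juggle the $\Q_{A_0,\rho}$-name-theoretic data of $\dot D$ with the $\Q_{A,\rho}$-order, ensuring in particular that words in $F_1$ and $E_1$ mentioning letters from $A_0\cup B$ evaluate consistently under both $\rho_H$ and $\rho$; the strong reduction of Remark~\ref{r.strong_embedding_mcg} paired with Lemma~\ref{forcingcompatible} is exactly the amalgamation tool that makes this possible.
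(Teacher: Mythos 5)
Your proposal is correct and follows essentially the same route as the paper: the heart of the argument is the density of the set $D^*$ (the paper's $D'$) below $p_0$ in $\Q_{A,\rho}$, established via the strong reduction from Claim~\ref{completeclaim}/Remark~\ref{r.strong_embedding_mcg} together with Lemma~\ref{forcingcompatible}, after which genericity of $G$ yields a condition whose $A_1$-restriction lands in the given dense set. You additionally spell out the filter axioms for $K$ and the pointwise verification of $\rho_G=(\rho_H)_K$, both of which the paper leaves implicit; the only cosmetic slip is the phrase ``dense open subset'' where one should say dense open superset (the downward closure of $D$), but the intended standard reduction is clear and correct.
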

\begin{proof}
That $H$ is $\Q_{A_0,\rho}$-generic over $V$ follows from the previous Lemma. To see that $K$ is $\Q_{A_1,\rho_H}$-generic over $V[H]$, suppose $D\subseteq \Q_{A_1,\rho_H}$ is dense and $D\in V[H]$. Define
$$
D'=\{p\in\Q_{A,\rho}: p\srestrict A_0\forces_{\Q_{A_0,\rho}} p\restrict A_1\in \dot{D}\}
$$
and let $p_0\in H$ be a condition such that $p_0\forces_{\Q_{A_0,\rho}} \text{``$D$ is dense''}$. We claim that $D'$ is dense below $p_0$ (in $\Q_{A,\rho}$.) For this, let $(s,F)=p\leq_{\Q_{A,\rho}} p_0$. Then by Claim \ref{completeclaim} we can find $p_0\leq_{
\Q_{A_0,\rho}} p\srestrict A_0$ such that for any $p_1\leq_{\Q_{A_0,\rho}} p_0$, $p_1$ is compatible with $p$. Thus we can find $q=(s_0,F_0)\in\Q_{A_1,\rho_{H}}$ and $(t,E)\leq_{\Q_{A_0,\rho}} p_0$ such that
$$
(t,E)\forces_{\Q_{A_0,\rho}} \dot q\in \dot D\wedge \dot q\leq_{\Q_{A_1,\rho_{\dot H}}} \dot p\restrict A_1.
$$
By Lemma \ref{forcingcompatible} it holds that $(s_0\cup t,F_0)\leq_{\Q_{A,\rho}} (s\restrict A_1\cup t,F)$, and therefore
$$
(s_0\cup t,F_0\cup E)\leq_{\Q_{A,\rho}} (s,F).
$$
Since clearly $(s_0\cup t, F_0\cup E)\in D'$, this shows that $D'$ is dense below $p_0$.

Now, since $p_0\in G$ it follows that there is $q'\in D'\cap G$. In $V[H]$ it then holds that $q'\restrict A_1\in D$, which shows that $K\cap D\neq\emptyset$.

That $(\rho_H)_K=\rho_G$ follows directly from the definition of $H$ and $K$.
\end{proof}

Our next goal is to prove the following.

\begin{theorem}\label{maxlthm}
Suppose $\rho:B\to S_\infty$ induces a cofinitary representation of $\F_B$. If $\card(A)>\aleph_0$ and $G$ is $\Q_{A,\rho}$-generic over $V$, then $\im(\rho_G)$ is a maximal cofinitary group in $V[G]$.
\end{theorem}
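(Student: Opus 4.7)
The plan is to show that every $\sigma \in S_\infty \cap V[G]$ with $\sigma \notin \im(\rho_G)$ fails to enlarge $\im(\rho_G)$ to a cofinitary group, by producing a non-identity element of $\langle\im(\rho_G),\sigma\rangle$ with infinitely many fixed points.

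First I exploit Knasterness (Lemma~\ref{ccc}): since $|A|>\aleph_0$, any name $\dot\sigma$ for an element of $S_\infty$ is supported on a countable $A_0\subseteq A$. Setting $A_1 = A\setminus A_0 \neq \emptyset$ and $H = G\cap\Q_{A_0,\rho}$, Lemma~\ref{twostep} factors $V[G] = V[H][K]$ with $K$ being $\Q_{A_1,\rho_H}$-generic over $V[H]$, $\sigma\in V[H]$, and $\rho_G = (\rho_H)_K$. Pick any $a\in A_1$: then $\rho_G(a)\notin V[H]$ (it is coded by a genuinely new real), so $\rho_G(a)\neq\sigma$ and the element $\rho_G(a)\sigma^{-1}$ is a non-identity member of $\langle\im(\rho_G),\sigma\rangle$. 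It suffices to show this element has infinitely many fixed points in $V[G]$.

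The fixed points of $\rho_G(a)\sigma^{-1}$ correspond bijectively to the agreement set $\{k : \rho_G(a)(k)=\sigma(k)\}$, so the task reduces to establishing, in $V[H]$, that for every $(s,F)\in\Q_{A_1,\rho_H}$ and every $N\in\omega$ there is $k\geq N$ with $(s\cup\{(a,k,\sigma(k))\},F)\leq(s,F)$. The bookkeeping requirements $k\notin\dom(s_a)$ and $\sigma(k)\notin\ran(s_a)$ exclude only finitely many $k$, so the only remaining obstruction is that for some $w\in F$ the extension creates a new fixed point of $w$. Assume for contradiction this happens for infinitely many $k$. By pigeonhole first on the finite $F$ and then on the finite family of subsets of the occurrences of $a^{\pm 1}$ in $w$, fix a single $w\in F$, a single nonempty $P\subseteq\{1,\ldots,r\}$ of $a$-positions, and an infinite set $K^\star$ such that for each $k\in K^\star$ the evaluation yielding the new fixed point $l_k$ uses the pair $(k,\sigma(k))$ precisely at the positions in $P$. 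Substituting a fresh letter $*^{\epsilon_j}$ (with $\rho_H'(*)=\sigma$) for $a^{\epsilon_j}$ at each $j\in P$ produces a nonempty reduced word $\tilde w$. A direct tracing of the computation shows $e_{\tilde w}[s,\rho_H'](l_k) = l_k$ for every $k\in K^\star$; since this partial injection is contained in $e_{\tilde w}[\rho_G,\sigma]$, the latter total permutation, viewed as an element of $\langle\im(\rho_G),\sigma\rangle$, inherits infinitely many fixed points, contradicting cofinitariness of the enlarged group.

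The main obstacle is ruling out the degenerate case $e_{\tilde w}[\rho_G,\sigma] = \id$, which would render the infinitely many fixed points vacuous. A formal collapse of $\tilde w$ to identity would encode $\sigma$ as a word in $\im(\rho_G)$, contradicting $\sigma\notin\im(\rho_G)$; but this requires care because $\tilde w$ may contain both $*$-letters and remaining $a$-letters. The freedom in choosing $a\in A_1$ (since $|A_1|>\aleph_0$ while $\sigma$ already lies in $V[H]$ and depends on only countably many coordinates) can be used to avoid spurious algebraic relations between $\sigma$ and $\rho_G(a)$. Once this is handled, no $\sigma\notin\im(\rho_G)$ extends $\im(\rho_G)$ to a cofinitary group, establishing maximality.
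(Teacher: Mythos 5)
Your proposal reproduces the paper's high-level structure correctly: the two-step factorization $V[G]=V[H][K]$ via Lemma~\ref{twostep}, the reduction of maximality to the density of the set $D_{\sigma,N}=\{(s,F)\in\Q_{A_1,\rho_H}:\exists k\geq N\ s_a(k)=\sigma(k)\}$, and the observation that this density forces $\rho_G(a)$ to agree with $\sigma$ infinitely often, contradicting cofinitariness of the would-be enlarged group. Up to this point you match the paper, which at this stage simply invokes Lemma~\ref{cofinlemma}.

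Where you diverge is in your attempt to re-derive the density directly via pigeonhole, and that is where there is a genuine gap. You assume for contradiction that for infinitely many $k$ the extension $(s\cup\{(a,k,\sigma(k))\},F)$ is not a $\leq$-extension, pigeonhole to a single $w\in F$ and a fixed nonempty set $P$ of $a$-positions where the new pair $(k,\sigma(k))$ is used, substitute $*$ for $a$ at the positions of $P$, and conclude that $e_{\tilde w}[\rho_G,\sigma]$ has infinitely many fixed points. This correctly produces a candidate ``bad'' element of $\langle\im(\rho_G),\sigma\rangle$, but the contradiction requires $e_{\tilde w}[\rho_G,\sigma]\neq\id$, and you explicitly acknowledge you cannot rule out $e_{\tilde w}[\rho_G,\sigma]=\id$. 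Your suggested fix (``vary $a\in A_1$'') does not resolve this: the density statement has to be verified for the particular $a$ and particular condition $(s,F)$ at hand, and the word $w\in F$ (hence the potential relation encoded by $\tilde w$) is fixed by the condition, so changing $a$ changes the entire density statement rather than eliminating the degeneracy. Your parenthetical claim that a collapse of $\tilde w$ ``would encode $\sigma$ as a word in $\im(\rho_G)$'' is simply false when $|P|\geq 2$, since then $\sigma$ occurs more than once in $\tilde w$ and the relation cannot be solved for $\sigma$. What is actually needed --- and what the paper's Lemma~\ref{cofinlemma} accomplishes with its explicit $N_i$-bound on the trajectories $e_v[\rho](k)$ through the fixed-point set of the fully substituted word $\bar w_i$ --- is a quantitative argument showing that for $k$ large enough the new pair can in effect be traversed only once, so that the relevant substituted word has a single occurrence of the fresh letter. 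Without establishing $|P|=1$ (or an equivalent control), the degenerate case remains open and the contradiction does not go through. So the proposal has the right skeleton but is missing the combinatorial heart of the argument.
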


The Theorem is a consequence of the following Lemma, which is parallel to \cite[Lemma 3.3]{zhang1}.

\begin{lemma}\label{cofinlemma}
Suppose $\rho:B\to S_\infty$ induces a cofinitary representation $\hat\rho:\F_B\to S_\infty$ and that there is $b_0\in B$ such that $\rho(b_0)\neq I$. Let $(s,F)\in\Q_{A,\rho{\:\restrict\,} B\setminus\{b_0\}}$ and let $a_0\in A$. Then there is $N\in\omega$ such that for all $n\geq N$
$$
(s\cup\{(a_0,n,\rho(b_0)(n))\},F)\leq_{\Q_{A,\rho{\:\restrict\,} B\setminus\{b_0\}}} (s,F).
$$
\end{lemma}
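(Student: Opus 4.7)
\medskip
\noindent\emph{Proof plan.} Set $\rho'=\rho\restrict B\setminus\{b_0\}$ and $\hat s_n=s\cup\{(a_0,n,\rho(b_0)(n))\}$. Because $s$ is finite, $\hat s_n$ is a condition of $\Q_{A,\rho'}$ for all but finitely many $n$: one just needs $n\notin\dom(s_{a_0})$ and $\rho(b_0)(n)\notin\ran(s_{a_0})$. The real content of the lemma is therefore to produce $N$ such that for every $n\geq N$, every $w\in F$ and every $l\in\omega$, if $e_w[\hat s_n,\rho'](l)=l$ then already $e_w[s,\rho'](l)=l$.

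Fix $w\in F$ in which $a_0$ occurs, and decompose it as
$$
w=u_k\,a_0^{\epsilon_k}\,u_{k-1}\,a_0^{\epsilon_{k-1}}\cdots a_0^{\epsilon_1}\,u_0,
$$
with each $u_i\in W_{(A\cup B)\setminus\{a_0,b_0\}}$ and $\epsilon_i\in\{\pm 1\}$. Any candidate fixed point $l$ of $e_w[\hat s_n,\rho']$ determines a unique subset $\sigma\subseteq\{1,\dots,k\}$ --- the positions at which the evaluation actually consumes the new pair $(n,\rho(b_0)(n))$. If $\sigma=\emptyset$ the two evaluations agree and we are done. The plan is to show that for each nonempty $\sigma$ and each $w\in F$ the set of admissible $n$ is finite; since $F$ and the powerset of $\{1,\dots,k\}$ are finite, taking $N$ above the union of these exceptional sets completes the argument.

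Write $U_i=e_{u_i}[s,\rho']$, and for $i\in\sigma$ let $\alpha_i,\beta_i\in\{n,\rho(b_0)(n)\}$ denote the input and output of the $i$-th $a_0$-letter in the trace. Whenever the trace crosses any $a_0$-position outside $\sigma$ --- equivalently whenever $1\notin\sigma$, $k\notin\sigma$, or two consecutive elements of $\sigma$ are non-adjacent in $\{1,\dots,k\}$ --- the adjacent intermediate value is forced into the finite set $\dom(s_{a_0})\cup\ran(s_{a_0})$ (possibly pulled back through a $U_i$), which is incompatible with $\alpha_i,\beta_i\in\{n,\rho(b_0)(n)\}$ once $n$ is large. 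This handles every $\sigma$ save $\sigma=\{1,\dots,k\}$.

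In the remaining regime $\sigma=\{1,\dots,k\}$, if any $u_i$ contains a letter of $A\setminus\{a_0\}$ then $U_i$ has finite domain and the constraint $\beta_i\in\dom(U_i)$ again confines $n$ to a finite set. Otherwise each $u_i\in W_{B\setminus\{b_0\}}$, so $U_i=\hat\rho(u_i)$ is a total bijection, and composing the equations along the trace yields $\hat\rho(u_kb_0^{\epsilon_k}u_{k-1}\cdots b_0^{\epsilon_1}u_0)(l)=l$. The main obstacle is verifying that this word is a nontrivial reduced element of $\F_B$: since each $u_i$ avoids $b_0^{\pm 1}$ no cancellation can occur at a $u_i$-boundary, and in the only delicate situation $u_i=\emptyset$ reducedness of $w$ forces $\epsilon_{i+1}=\epsilon_i$, which rules out the collapse $b_0^{\epsilon_{i+1}}b_0^{\epsilon_i}=\emptyset$. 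The cofinitarity hypothesis on $\hat\rho$ then pins $l$ to a finite set, and the equation $\alpha_1=\hat\rho(u_0)(l)\in\{n,\rho(b_0)(n)\}$ pins $n$ as well. Assembling the finitely many exceptional sets delivers the desired $N$.
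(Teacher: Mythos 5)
Your proof follows the same basic strategy as the paper's: replace every $a_0$ by $b_0$ to obtain a word in $\F_B$, and use the cofinitarity of $\hat\rho$ to bound the fixed points. The tactical implementation, however, is genuinely different. The paper decomposes each $w\in F$ by grouping the $a_0$-powers as $w=u_{j}a_0^{k_j}\cdots u_1 a_0^{k_1}u_0$, first invoking Lemma~\ref{extension} to pass to an extension of $s$ in which the domains and ranges of the various $e_{a_0^{k_m}}[s,\rho]$ and $e_{u_m}[s,\rho]$ are mutually saturated (this ``WLOG'' is justified because the conclusion for an extension of $(s,F)$ implies the conclusion for $(s,F)$), and then defines $N$ in terms of the finitely many intermediate values occurring along traces of fixed points of $\hat\rho(\bar w)$. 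You instead split the word into individual $a_0^{\pm1}$ letters and, for each putative fixed point $l$ of $e_w[\hat s_n,\rho']$, track the subset $\sigma\subseteq\{1,\dots,k\}$ of $a_0$-steps at which the new pair is consumed; this replaces the saturation step by an explicit case analysis on $\sigma$ (empty, proper nonempty, full), in which boundary crossings between $\sigma$ and its complement force $n$ into a finite set, and the full case reduces to a fixed-point count for $\hat\rho(\bar w)$. Your proof is somewhat more self-contained (it does not rely on the domain/range extension Lemma~\ref{extension}), at the cost of the $\sigma$-bookkeeping; the key nontrivial check that the collapsed word $\bar w$ is reduced (including the observation that $u_i=\emptyset$ forces $\epsilon_{i+1}=\epsilon_i$) is exactly what the paper's power-grouping $a_0^{k_m}$ handles automatically.

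One point worth flagging, which you share with the paper's proof: the final step, ``the cofinitarity hypothesis on $\hat\rho$ pins $l$ to a finite set,'' uses that $\hat\rho(\bar w)$ has finitely many fixed points, which requires $\hat\rho(\bar w)\neq I$. You correctly argue that $\bar w$ is reduced and nontrivial in $\F_B$, but the paper's definition of ``cofinitary representation'' permits a nontrivial kernel, so $\hat\rho(\bar w)=I$ is not formally excluded. (The paper's own proof needs this too, in defining $N_i$ as a max over $\{k: e_{\bar w_i}[\rho](k)=k\}$.) In the contexts where the lemma is applied in the paper the representations involved are injective, so this is a shared implicit hypothesis rather than an error peculiar to your argument; but if you want a watertight statement you should add the hypothesis that $\hat\rho$ is injective, or at least that no $\bar w$ arising from $F$ lies in $\ker\hat\rho$.

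Minor imprecision: in the $\sigma=\{1,\dots,k\}$ case with some $u_i$ containing a letter from $A\setminus\{a_0\}$, you say ``the constraint $\beta_i\in\dom(U_i)$ confines $n$''; for $i=0$ the relevant constraint is instead $\alpha_1\in\ran(U_0)$, which is still a finite set since a finite partial injection has finite range, so the conclusion stands, but the wording should account for both endpoints.
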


\begin{proof}
Let $w_1,\ldots, w_l\in F$ enumerate the words in $F$ in which $a_0$ occur. Then we may write each word $w_i$ on the form
$$
w_i=u_{i,j_i}a_0^{k(i,j_i)}u_{i,j_i-1}a_0^{k(i, j_i-1)}\cdots u_{i,1}a_0^{k(i, 1)}u_{i,0}
$$
where $u_{i,m}\in W_{A\setminus\{a_0\}\cup B\setminus\{b_0\}}$ and are non-$\emptyset$ whenever $m\notin\{j_i,0\}$. By Lemma \ref{extension} we may assume that for all $u_{i,m}$ with $\dom(e_{u_{i,m}}[s,\rho])$ and $\ran(e_{u_{i,m}}[s,\rho])$ finite that
$$
\dom(e_{a_0^{k(i,m+1)}}[s,\rho])\supseteq \ran(e_{u_{i,m}}[s,\rho])
$$
and
$$
\ran(e_{a_0^{k(i,m)}}[s,\rho])\supseteq \dom(e_{u_{i,m}}[s,\rho]).
$$
Let $\bar w_i$ be the word in which every occurrence of $a_0$ in $w_i$ has been replaced by $b_0$. If $e_{\bar w_i}[\rho]$ is totally defined, then since $\rho$ induces a cofinitary representation there are at most finitely many $n$ such that $e_{\bar w_i}[\rho](n)\neq n$. For each $\bar w_i$ with $e_{\bar w_i}[\rho]$ totally defined and $1\leq m\leq j_i$ let
$$
\bar w_{i,m}=u_{i,m}b_0^{k(i,m)}\cdots u_{i,1}b_0^{k(i,1)}u_{i,0},
$$
and let
\begin{align*}
N_i=\max\{ e_v[\rho](k):& e_{\bar w_i}[\rho](k)=k \wedge v=b^{\sign(k(i,m)p}\bar w_{i,m}\wedge
\\
&0\leq p\leq\sign(k(i,m))k(i,m)\wedge 0\leq m\leq j_i\}.
\end{align*}
Then let $N\in\omega$ be such that $N\geq\max\{N_i:i\leq l\}$ and $n\notin\dom(s_{a_0})$ and $\rho(b_0)(n)\notin\ran(s_{a_0})$ whenever $n\geq N$. Then for any $n\geq N$ we have that on the one hand, if $e_{\bar w_i}[\rho]$ is not everywhere defined, then
$$
\dom(e_{w_i}[s,\rho])=\dom(e_{w_i}[s\cup\{(a_0,n,\rho(b_0)(n))\},\rho]),
$$
while if $e_{\bar w_i}[\rho]$ is everywhere defined then necessarily
$$
e_{w_i}[s\cup\{(a_0,n,\rho(b_0)(n))\},\rho](k)=k
$$
only when $e_{w_i}[s,\rho](k)=k$.
\end{proof}

\begin{proof}[Proof of Theorem \ref{maxlthm}]
Let $b_0\notin B\cup A$. Suppose $\card(A)>\aleph_0$ and that $G$ is $\Q_{A,\rho}$-generic, and suppose further that there is a permutation $\sigma\in \cofin(S_\infty)^{V[G]}$ such that $\rho_G':B\cup\{b_0\}\to S_\infty$ defined by $\rho_G'\restrict B=\rho_G$, and $\rho_G'(b_0)=\sigma$ induces a cofinitary representation of $\F_{B\cup\{b_0\}}$. Let $\dot\sigma$ be a name for $\sigma$. Then there is $A_0\subseteq A$ countable so that $\dot\sigma$ is a $\Q_{A_0,\rho}$-name and so we already have $\sigma\in V[H]$, where $H=G\cap \Q_{A_0,\rho}$. Let $A_1=A\setminus A_0$, and let $K$ be defined as in Lemma \ref{twostep}. Define
$$
D_{\sigma, N}=\{(s,F)\in\Q_{A_1,\rho_{H}}: (\exists n\geq N) s(n)=\sigma(n)\}.
$$
By Lemma \ref{cofinlemma} this set is dense. Thus in $V[H][K]$, for any $a_0\in A\setminus A_0$ we have $(\rho_{H})_K(a_0)(n)=\sigma(n)$ for infinitely many $n$. Since $(\rho_H)_K=\rho_G$ by Lemma \ref{twostep}, this is contradicts that $\rho_G'$ induces a cofinitary representation.
\end{proof}

\section{Iteration along a two-sided template}

\subsection{Preliminaries} We now recall various definitions and introduce several notions that are needed to set up the framework in which we will treat the iteration along a two-sided template.

\subsubsection{Localization}

As indicated we are aiming to give an iterated forcing construction which will provide a generic extension in which the minimal size of a
maximal cofinitary group is of countable cofinality. In order to provide a lower bound for $\mathfrak{a}_g$, along this iteration construction cofinally often
we will force with the following partial order $\BbL$, known as localization.

\begin{definition}\label{d.loc} The forcing notion $\BbL$ consists of pairs $(\sigma,\phi)$ such that $\sigma\in{^{<\omega}(^{<\omega}[\omega])}$, $\phi\in{^\omega(^{<\omega}[\omega])}$ such that $\sigma\subseteq\phi$, $\forall i<|\sigma|(|\sigma(i)|=i)$ and for all $i\in\omega(|\phi(i)|\leq |\sigma|)$.
The extension relation is defined as follows: $(\sigma,\phi)\leq(\tau,\psi)$ if and only if $\sigma$ end-extends $\tau$ and for all $i\in\omega$ $(\psi(i)\subseteq\phi(i))$.
\end{definition}

Recall that a slalom is a function $\phi:\omega\to [\omega]^{<\omega}$ such that for all $n\in\omega$ we have $|\phi(n)|\leq n$. We say that a slalom localizes a real $f\in{^\omega\omega}$ if there is $m\in\omega$ such that for all $n\geq m$ we have $f(n)\in\phi(n)$.  The following is well-known and follows easily from the definition of $\BbL$.

\begin{lemma}\label{l.localize}
The poset $\BbL$ adds a slalom which localizes all ground model reals.
\end{lemma}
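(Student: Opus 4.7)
The plan is to verify that the canonical $\BbL$-generic object
$$
\Phi = \bigcup\{\sigma : (\exists\phi)\,(\sigma,\phi)\in G\}
$$
is a total slalom on $\omega$ that localizes every $f\in\omega^\omega\cap V$. First I would check that $\Phi$ is total: for each $k\in\omega$ the set of $(\sigma,\phi)$ with $|\sigma|>k$ is dense, since given $(\sigma_0,\phi_0)$ with $|\sigma_0|=k_0$ one can extend the stem by choosing, for each $n\in[k_0,k_1)$, an arbitrary $n$-element superset $\sigma_1(n)$ of $\phi_0(n)$ (possible because $|\phi_0(n)|\leq k_0\leq n$), and then padding the tail $\phi_1(n)$ for $n\geq k_1$ to any superset of $\phi_0(n)$ of cardinality at most $k_1=|\sigma_1|$. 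Because $|\sigma(i)|=i$ by definition, the resulting $\Phi$ satisfies $|\Phi(n)|=n$, hence is a slalom.

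The main step is a density argument: for any $f\in\omega^\omega\cap V$ I would show the set
$$
E_f=\{(\sigma,\phi)\in\BbL : (\exists N)\,(\forall n\geq N)\ f(n)\in\phi(n)\}
$$
is dense. Given $(\sigma_0,\phi_0)$ with $k_0=|\sigma_0|$, set $N=k_0+1$ and choose any $k_1>N$. For $n\in[k_0,k_1)$ take $\sigma_1(n)$ to be any $n$-element superset of $\phi_0(n)\cup\{f(n)\}$; this is possible because $|\phi_0(n)\cup\{f(n)\}|\leq k_0+1\leq n$. For $n\geq k_1$ set $\phi_1(n)=\phi_0(n)\cup\{f(n)\}$, whose size is at most $k_0+1\leq k_1=|\sigma_1|$, respecting the cap $|\phi_1(i)|\leq|\sigma_1|$. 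By construction $\sigma_1$ end-extends $\sigma_0$, $\sigma_1\subseteq\phi_1$, and $\phi_0(i)\subseteq\phi_1(i)$ for all $i$, so $(\sigma_1,\phi_1)\leq(\sigma_0,\phi_0)$ and $(\sigma_1,\phi_1)\in E_f$ with witness $N=k_0+1$.

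Finally, to conclude that $\Phi$ localizes $f$, I would argue that if $(\sigma,\phi)\in G\cap E_f$ has witness $N$, then for each $n\geq N$ there is some $(\sigma',\phi')\leq(\sigma,\phi)$ in $G$ with $|\sigma'|>n$; the extension relation gives $\phi(i)\subseteq\phi'(i)$ pointwise, and the compatibility condition $\sigma'\subseteq\phi'$ gives $\Phi(n)=\sigma'(n)=\phi'(n)\supseteq\phi(n)\ni f(n)$. The only mildly delicate point is juggling the three defining constraints $|\sigma(i)|=i$, $|\phi(i)|\leq|\sigma|$, and $\sigma\subseteq\phi$ simultaneously when extending; the argument avoids any obstruction by only inserting new values of $f$ at positions $n\geq k_0+1$, where there is strictly enough room to accommodate $f(n)$ alongside the prior commitments of $\phi_0$.
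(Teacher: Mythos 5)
Your argument is correct and is exactly the standard density argument one would give; the paper does not supply a proof, remarking only that this fact is ``well-known and follows easily from the definition of $\BbL$.'' One small off-by-one in your write-up: the claimed bound $|\phi_0(n)\cup\{f(n)\}|\leq k_0+1\leq n$ fails at $n=k_0$, so at that single coordinate one should take $\sigma_1(k_0)$ to be just a $k_0$-element superset of $\phi_0(k_0)$, without trying to force $f(k_0)$ into it --- which is exactly consistent with your choice of witness $N=k_0+1$ and your closing remark that $f$'s values are only inserted for $n\geq k_0+1$.
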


Let $\add(\mathcal N)$ denote the additivity of the (Lebesgue) null ideal, and let $\cof(\mathcal N)$ denote the cofinality of the null ideal. Then:

\begin{theorem}[Bartoszyn\'{n}ski, Judah{\cite[Ch.2]{judabarto}}]\label{t.barto}
(1) $\add(\mathcal N)$ is the least cardinality of a family $F\subseteq\omega^\omega$ such that no slalom localizes all members of $F$

(2) $\cof(\mathcal N)$ is the least cardinality of a family $\Phi$ of slaloms such that every member of $\omega^\omega$ is localized by some $\phi\in\Phi$.
\end{theorem}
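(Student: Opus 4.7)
My plan is to follow Bartoszy\'nski's classical approach, which realizes the pair ``slaloms with the localization relation'' as Tukey-equivalent to the null ideal under inclusion, so that the additivity and cofinality of the former match those of the latter.

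The heart of the proof is a pair of definable reductions. In one direction, to each slalom $\phi$ I would associate a null set $N_\phi\subseteq 2^\omega$ built via Borel--Cantelli: arrange $\phi$ to code a sequence $(U_k)_{k\in\omega}$ of basic clopen sets with $\sum_k\mu(U_k)<\infty$, and set $N_\phi=\bigcap_n\bigcup_{k\ge n}U_k$. In the other direction, to each null $G_\delta$-set $N\subseteq 2^\omega$ I would associate a real $f_N\in\omega^\omega$ that enumerates a Borel--Cantelli witness sequence for $N$. The key computational point is to verify that whenever a slalom $\phi$ localizes $f_N$, then $N\subseteq N_\phi$; this pins down the Tukey connection between ``localizes'' and ``$\subseteq$.''

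Granting these reductions, part (1) follows: any family $F\subseteq\omega^\omega$ not localized by a single slalom yields a family of null sets (via an appropriate coding of the $f\in F$ as Borel--Cantelli witnesses) whose union is non-null, and conversely, giving that the minimal size of such an $F$ equals $\add(\mathcal N)$. Similarly for part (2), a family $\Phi$ of slaloms that localizes all of $\omega^\omega$ yields a cofinal family in the null ideal through $\phi\mapsto N_\phi$, and conversely any cofinal family in $\mathcal N$ produces a localizing family of slaloms through the coding derived from their Borel--Cantelli witnesses, yielding $\cof(\mathcal N)$.

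The main obstacle is calibrating the growth bounds. The definition of a slalom here imposes $|\phi(n)|\le n$, whereas the Borel--Cantelli construction naturally requires summability $\sum\mu(U_k)<\infty$; these two bounds must be reconciled. The standard fix is to work first with the auxiliary class of $h$-slaloms (functions $\phi$ with $|\phi(n)|\le h(n)$ for a fixed fast-growing $h\in\omega^\omega$), carry out the coding in that more forgiving setting, and then establish via a composition argument---merging finitely many slaloms into one---that the resulting cardinal invariants do not depend on the choice of $h$. Once this calibration is settled, the remaining work is routine definitional unfolding.
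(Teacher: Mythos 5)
The paper offers no proof of this theorem: it is stated as a black-box citation to Bartoszy\'nski--Judah, Chapter 2, and is invoked in that capacity in the proof of Lemma~\ref{l.nonM}. There is therefore no internal proof to compare your attempt against.

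That said, your outline is a faithful summary of the argument in the cited source. The Tukey-equivalence strategy --- coding slaloms to null sets via Borel--Cantelli, coding null $G_\delta$-sets to reals via their covering witnesses, and verifying the morphism conditions --- is exactly how the slalom characterization of $\add(\mathcal N)$ and $\cof(\mathcal N)$ is established there, and you correctly flag the calibration issue between the $|\phi(n)|\le n$ bound in Definition~\ref{d.loc} and the summability hypothesis that Borel--Cantelli requires, along with the standard resolution through $h$-slaloms and a merging lemma showing independence of $h$. The one place your sketch compresses matters is that a full Tukey equivalence requires morphisms in \emph{both} directions. You spell out one explicitly ($N\mapsto f_N$ and $\phi\mapsto N_\phi$ with the condition ``$\phi$ localizes $f_N\Rightarrow N\subseteq N_\phi$''), but the other --- a map $f\mapsto A_f$ into $\mathcal N$ and a map $N\mapsto\phi_N$ into slaloms satisfying ``$A_f\subseteq N\Rightarrow\phi_N$ localizes $f$'' --- is only gestured at as ``an appropriate coding of the $f\in F$,'' and in the actual proof that is where most of the measure-theoretic combinatorics (averaging measures of cylinders, Fubini-type estimates) lives. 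As a blueprint for reconstructing the proof from the reference, your proposal is sound.
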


Finally, we will need the following result due to Brendle, Spinas and Zhang:

\begin{theorem}[\cite{brspzh00}]\label{t.agM}
$\mathfrak a_g\geq\non(\mathcal M)$.
\end{theorem}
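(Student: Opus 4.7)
The plan is to prove the contrapositive: every cofinitary group $G\subseteq S_\infty$ of cardinality $\kappa<\non(\mathcal{M})$ is not maximal. By the conjugation reduction of Lemma~\ref{nocancel}, it suffices to produce $\sigma\in S_\infty\setminus G$ so that for every $w\in\What{G\cup\{*\}}$ in which the fresh letter $*$ appears, $e_w[\sigma,\mathrm{inc}_G]$ has only finitely many fixed points. I enumerate such words as $\{w_\alpha:\alpha<\kappa\}$ (of cardinality $|G|\cdot\aleph_0=|G|$ for infinite $G$), and aim to construct a single $\sigma$ handling all $w_\alpha$ simultaneously.

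The key step is to translate each condition ``$e_{w_\alpha}[\sigma]$ has infinitely many fixed points'' into an ``infinitely often equal'' statement about $\sigma$, so as to bring Bartoszy\'nski's characterization of $\non(\mathcal{M})$ to bear. In the simplest case $w_\alpha=*h$ with $h\in G$, a fixed point at $n$ is the equation $\sigma(h(n))=n$, i.e.\ $\sigma^{-1}(n)=h(n)$, so infinitely many fixed points force $\sigma^{-1}$ to agree with $h$ infinitely often. For a general word $*^{k_1}u_1*^{k_2}u_2\cdots *^{k_\ell}u_\ell$ with $u_i\in\F_G$, each fixed point forces a chain of $\sigma$-equalities $\sigma(p_i)=q_i$, with successive $(p_i,q_i)$ computed from the previous pair by applying $G$-elements; from this combinatorics one extracts a function $g_\alpha\in\omega^\omega$, depending only on $w_\alpha$, such that infinitely many fixed points of $e_{w_\alpha}[\sigma]$ force $\sigma$ to agree with $g_\alpha$ at infinitely many points.

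Since $\kappa<\non(\mathcal{M})$, Bartoszy\'nski's theorem identifying $\non(\mathcal{M})$ with the least size of an infinitely-often-equal family in $\omega^\omega$ supplies an $f\in\omega^\omega$ eventually different from every $g_\alpha$. It then remains to turn $f$ into a permutation $\sigma\in S_\infty\setminus G$ still eventually different from each $g_\alpha$; this is a back-and-forth construction using the domain- and range-extension freedom of Lemma~\ref{extension}, or equivalently the transfer of Bartoszy\'nski's i.o.e.\ characterization from $\omega^\omega$ to the Polish group $S_\infty$. By construction each $e_{w_\alpha}[\sigma,\mathrm{inc}_G]$ has only finitely many fixed points, so $\langle G,\sigma\rangle$ is cofinitary, contradicting the assumed maximality of $G$.

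The main obstacle is the translation step for words in which $*$ occurs more than once: a single fixed point produces several simultaneous $\sigma$-equalities, and one must isolate a single ``new coincidence'' per fixed point that can be recorded as an equality ``$\sigma(k)=g_\alpha(k)$'' for a fixed function $g_\alpha$ depending only on $w_\alpha$. The right way to do this is an induction on the number of $*$-occurrences, in the spirit of the $a$-good-word analysis of Lemma~\ref{goodlemma}, peeling off one $*$ at a time and identifying the fresh coincidence each fixed point provides; this word-combinatorial bookkeeping is the technical heart of the Brendle--Spinas--Zhang argument.
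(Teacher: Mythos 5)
The paper does not prove this theorem at all --- it is stated as a black-box citation of Brendle--Spinas--Zhang --- so there is no ``paper's proof'' to compare against. But your proposed argument contains a genuine gap in the translation step, which you yourself flag as the ``technical heart'' and then wave away.

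The problem is already visible for the word $w=*^2$ (no group letters at all): a fixed point of $e_w[\sigma]=\sigma^2$ at $n$ is a $2$-cycle $\sigma(n)=m$, $\sigma(m)=n$ with both $n$ and $m$ free. There is no function $g_w\in\omega^\omega$ determined by $w$ and $G$ alone such that ``$\sigma^2$ has infinitely many fixed points'' forces ``$\sigma=^\infty g_w$''; indeed the constraint is on the cycle structure of $\sigma$, not on its pointwise values relative to some fixed target. The same obstruction occurs for every word with more than one $*$-block, e.g. $w=*u_1*u_2$: a fixed point gives $\sigma(u_2(n))=m$ and $\sigma(u_1(m))=n$, and attempting to solve for the ``new'' coincidence yields $\sigma(u_1(m))=u_2^{-1}\sigma^{-1}(m)$ --- the right-hand side involves $\sigma^{-1}$, so the constraint is self-referential and cannot be recorded as agreement with a function $g_\alpha$ fixed in advance. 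An induction that ``peels off one $*$ at a time'' does not break this circularity, because the intermediate value $m=\sigma(u_2(n))$ is unconstrained until $\sigma$ is chosen.

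Consequently the application of Bartoszy\'nski's characterization $\non(\mathcal M)=\mathfrak b_{\neq^*}$ in the pointwise eventually-different form is not available. The actual Brendle--Spinas--Zhang argument works with a block-wise (interval-partition) refinement of the characterization of $\non(\mathcal M)$ and builds $\sigma$ so as to control its cycle structure on consecutive intervals simultaneously --- roughly, $\sigma$ is arranged to push forward along an interval partition so that the ``walk'' coded by any word $w$ cannot close up, and the cardinal hypothesis $|G|<\non(\mathcal M)$ is used to choose the interval data generically against all $|G|$-many constraints. That interval-combinatorial machinery, not a reduction to pointwise eventual difference from a single family $\{g_\alpha\}$, is what makes the multi-$*$ case go through; without it, your outline does not yield a proof.
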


In our intended forcing construction cofinally often we will force with the partial order $\BbL$, which using the above
characterizations will provide a lower bound for $\mathfrak{a}_g$.

\subsubsection{Complete embeddings.}\label{complete_embeddings}
Recall that if $\P$ and $\Q$ are posets such that $\P\subseteq \Q$, then we say that $\P$ is completely contained in $\Q$, written $\P\lessdot\Q$ if $\P\subseteq\Q$ and
\begin{enumerate}
\item if $p,p'\in\P$ and $p\leq_\P p'$ then $p\leq_{\Q} p'$.
\item if $p, p'\in\P$ and $p\perp_{\P} p'$ then $p\perp_{\Q} p'$.
\item\label{i.reduction} if $q\in\Q$ then there is $r\in\P$ (called a \emph{reduction} of $q$) such that for all $p\in\P$ with $p\leq_{\P} r$, the conditions $p$ and $q$ are compatible.
\end{enumerate}
We note that (\ref{i.reduction}) above may be seen to be equivalent to
\begin{enumerate}[\indent(3')]
\item All maximal antichains in $\P$ are maximal in $\Q$.
\end{enumerate}

\begin{lemma}\label{l.strongsuslin} Let $\P$ and $\Q$ be posets, and suppose $\P\lessdot\Q$. Let $q\in\Q$, $p\in\P$ and $q\leq_\Q p$. Then any reduction of $q$ to $\P$ is compatible in $\P$ with $p$, and so $q$ has a reduction extending $p$.
\end{lemma}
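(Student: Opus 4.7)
The plan is to argue by contradiction for the first assertion and then to upgrade the obtained reduction in a direct way. Let $r \in \P$ be any reduction of $q$ to $\P$, so that every $p'\in\P$ with $p'\leq_\P r$ is $\Q$-compatible with $q$. I want to show that $r$ and $p$ are $\P$-compatible.

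Suppose for contradiction that $r \perp_\P p$. By clause (2) of the definition of $\lessdot$, this yields $r \perp_\Q p$. On the other hand, $r\leq_\P r$, so by the reduction property $r$ and $q$ are $\Q$-compatible: pick $q'\in\Q$ with $q'\leq_\Q r$ and $q'\leq_\Q q$. Combining $q'\leq_\Q q$ with the hypothesis $q\leq_\Q p$ gives $q'\leq_\Q p$ by transitivity, so $q'$ witnesses that $r$ and $p$ are $\Q$-compatible after all, contradicting the incompatibility derived from (2). Hence $r$ and $p$ are $\P$-compatible.

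For the ``and so'' clause, let $r'\in\P$ be any common $\P$-extension of $r$ and $p$, so in particular $r'\leq_\P p$. I claim that $r'$ is itself a reduction of $q$. Indeed, given any $p''\in\P$ with $p''\leq_\P r'$, transitivity gives $p''\leq_\P r$, and since $r$ is a reduction of $q$, $p''$ is $\Q$-compatible with $q$. Thus $r'$ is a reduction of $q$ that extends $p$, as required.

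The only substantive step is the first paragraph, where one must translate back and forth between compatibility in $\P$ and compatibility in $\Q$; once condition (2) is invoked, the rest is just a chain of transitivity. No further structural properties of $\P$ or $\Q$ are needed beyond the bare definition of $\lessdot$.
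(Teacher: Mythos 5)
Your proof is correct and follows essentially the same route as the paper's own argument: derive $r\perp_\Q p$ from clause (2), use the reduction property and transitivity to produce a common $\Q$-extension of $r$ and $p$, and then extend $r$ below $p$ in $\P$. The only cosmetic difference is that you apply the reduction property directly to $r$ itself rather than to an arbitrary $x\leq_\P r$; both routes give the same contradiction.
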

\begin{proof} Suppose $r\in\P$ is a reduction of $q$ and $r\bot_\P p$. Let $x\in\P$, $x\leq_\P r$. Then since $r$ is a reduction of $q$, we have that
$x$ is compatible with $q$ in $\Q$ and so there is $x^\prime\in\Q$ which is their common extension. But then $x^\prime\leq_\Q x\leq_\P r$ and so $x^\prime\leq_\Q r$. Also $x^\prime\leq_\Q q\leq_\Q p$ and so $x^\prime\leq_\Q p$. Therefore $r$ is compatible with $p$ in $\Q$. But by assumption $\P\lessdot \Q$ and so for all $x,y\in\P(y\bot_\P z\rightarrow y\bot_\Q z)$. Therefore $r\bot_\Q p$, which is a contradiction.

To complete the proof, consider any reduction $r$ of $q$ to $\P$. Then $r$ is compatible in $\P$ with $´p$ and so they have a common extension $r_0$. However, any extension of a reduction is a reduction and so $r_0$ is a reduction of $q$ with $r_0\leq_\P p$.
\end{proof}

\subsubsection{Canonical Projection of a Name for a Real}

\begin{definition}\label{d.projection_name_real} Let $\mathbb{B}$ be a partial order and $y\in\mathbb{B}$. For each $n\geq 1$ let $\mathcal{B}_n$ be a maximal antichain below $y$. We will say that the set $\{(b,s(b))\}_{b\in\mathcal{B}_n,n\geq 1}$   is {\it{a nice name for a real below $y$}} if
\begin{enumerate}
{\item whenever $n\geq 1$, $b\in\mathcal{B}_n$ then $s(b)\in{^n\omega}$}
{\item whenever $m>n\geq 1$, $b\in \mathcal{B}_n$, $b^\prime\in \mathcal{B}_m$ and $b,b^\prime$ are compatible, then $s(b)$ is an initial segment of $s(b^\prime)$.}
\end{enumerate}
\end{definition}

\begin{remark} Whenever $\dot{f}$ is a $\mathbb{B}$-name for a real, we can associate with $\dot{f}$ a family of maximal antichains $\{\mathcal{B}_n\}_{n\geq 1}$ and initial approximations $s(b)\in{^n\omega}$ of $\dot{f}$ for  $b\in\mathcal{B}_n$ such that for all $n$ and $b$, $b\Vdash_{\mathbb{B}} \dot{f}\rest n =\check{s}(b)$ and the collection $\{(b,s(b))\}_{b\in\mathcal{B}_n,n\in\omega}$ has the above properties. Thus we can assume that all names for reals are nice and abusing notation we will write $\dot{f}=\{(b,s(b))\}_{b\in\mathcal{B}_n,n\in\omega}$.
\end{remark}

\begin{lemma}\label{l.projection_name_real} Let $\mathbb{A}$ be a complete suborder of $\mathbb{B}$, $y\in \mathbb{B}$ and $x$ a reduction of $y$ to $\mathbb{A}$. Let $\dot{f}=\{(b,s(b))\}_{b\in\mathcal{B}_n,n\geq 1}$ be a nice name for a real below $y$. Then there is $\dot{g}=\{(a,s(a))\}_{a\in\mathcal{A}_n, n\geq 1}$, a $\mathbb{A}$-nice name for a real below $x$, such that for all $n\geq 1$, for all $a\in\mathcal{A}_n$, there is
$b\in\mathcal{B}_n$ such that $a$ is a reduction of $b$ and $s(a)=s(b)$.
\end{lemma}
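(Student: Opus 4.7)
The plan is to construct $\mathcal{A}_n$ and a witnessing map $b:\mathcal{A}_n\to\mathcal{B}_n$ inductively, exploiting a tree structure I first impose on the given presentation of $\dot{f}$. By a routine simultaneous refinement I may pass to an equivalent presentation $\{\mathcal{B}_n',s'\}$ of $\dot f$ in which $\mathcal{B}_{n+1}'$ refines $\mathcal{B}_n'$ in $\mathbb{B}$ and labels are copied down the tree: if $b'\in\mathcal{B}_{n+1}'$ lies $\mathbb{B}$-below $b\in\mathcal{B}_n'$, then $s'(b)\subseteq s'(b')$. Since any reduction of $b'\leq_{\mathbb{B}} b$ is automatically a reduction of $b$ with the same label, the conclusion for the refined presentation yields it for the original; I will henceforth assume the tree property.

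For the base case, let $D_1$ be the set of $p\in\mathbb{A}$ with $p\leq_{\mathbb{A}} x$ that reduce some $b\in\mathcal{B}_1$. To show $D_1$ is dense below $x$, given $p'\leq_{\mathbb{A}} x$, I use the complete embedding to obtain $p'\leq_{\mathbb{B}} x$; since $x$ reduces $y$, there is $q\leq_{\mathbb{B}} p',y$, and refining $q$ below some $b\in\mathcal{B}_1$ and applying Lemma~\ref{l.strongsuslin} delivers a reduction of that extension lying $\mathbb{A}$-below $p'$, which is then also a reduction of $b$. Let $\mathcal{A}_1$ be a maximal antichain in $D_1$ below $x$; density guarantees that it is maximal below $x$ in $\mathbb{A}$ as well. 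For each $a\in\mathcal{A}_1$, choose a witness $b(a)\in\mathcal{B}_1$ and set $s(a)=s(b(a))$.

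For the inductive step, let $D_{n+1}$ consist of $p\in\mathbb{A}$ such that $p\leq_{\mathbb{A}} a$ for some $a\in\mathcal{A}_n$ and $p$ reduces some $b'\in\mathcal{B}_{n+1}$ with $b'\leq_{\mathbb{B}} b(a)$. For density below $x$: given $p'\leq_{\mathbb{A}} x$, refine to $p_1\leq_{\mathbb{A}} p',a$ for some $a\in\mathcal{A}_n$ by maximality. Then $p_1$ is $\mathbb{B}$-compatible with $b(a)$ (as $a$ reduces $b(a)$), giving $q\leq_{\mathbb{B}} p_1,b(a)\leq_{\mathbb{B}} y$; refining $q$ into some $b'\in\mathcal{B}_{n+1}$ produces $q'\leq_{\mathbb{B}} q,b'$. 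By the tree property $b'$ lies $\mathbb{B}$-below a unique element of $\mathcal{B}_n$, and $q'\leq_{\mathbb{B}} b',b(a)$ forces that element to be $b(a)$, so $b'\leq_{\mathbb{B}} b(a)$. A final application of Lemma~\ref{l.strongsuslin} to $q'\leq_{\mathbb{B}} p_1$ yields a reduction of $q'$ that is $\mathbb{A}$-below $p_1$, hence also a reduction of $b'$. Take $\mathcal{A}_{n+1}$ to be a maximal antichain in $D_{n+1}$ below $x$, recording $a(p)\in\mathcal{A}_n$ and $b(p)\in\mathcal{B}_{n+1}$ for each $p\in\mathcal{A}_{n+1}$, and set $s(p)=s(b(p))$.

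For niceness of $\dot g$, the only nontrivial condition is the consistency of labels: if $a\in\mathcal{A}_n$ and $a'\in\mathcal{A}_m$ with $n\leq m$ are $\mathbb{A}$-compatible, the inductive construction produces an ancestor chain $a'=a_m,\ a_{m-1}=a(a_m),\ldots,\ a_n\in\mathcal{A}_n$ with $a'\leq_{\mathbb{A}} a_n$ and $b(a')\leq_{\mathbb{B}}\cdots\leq_{\mathbb{B}} b(a_n)$ in the tree $\{\mathcal{B}_k\}$. Any common $\mathbb{A}$-extension of $a$ and $a'$ refines both $a$ and $a_n$, so the antichain property of $\mathcal{A}_n$ forces $a=a_n$; thus $s(a)=s(b(a_n))\subseteq s(b(a'))=s(a')$ by the tree property of $\{\mathcal{B}_k\}$. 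The main obstacle is precisely this label consistency across different $\mathcal{A}_n$'s, which both motivates the reduction to a tree-structured presentation of $\{\mathcal{B}_n\}$ and dictates the nested construction of $\mathcal{A}_{n+1}$ below $\mathcal{A}_n$.
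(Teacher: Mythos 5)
Your proof is correct and takes essentially the same approach as the paper: recursively build nested antichains $\mathcal{A}_n$ below $x$ so that each element reduces some element of $\mathcal{B}_n$ and inherits its label, using the reduction property, the complete embedding, and Lemma~\ref{l.strongsuslin} to establish density at each stage. The only difference is your preliminary pass to a tree-structured refinement of $\{\mathcal{B}_n\}$, which the paper avoids: since the common extension $\tilde t$ constructed in the inductive step lies below both $b\in\mathcal{B}_n$ and $\bar b\in\mathcal{B}_{n+1}$, condition (2) of Definition~\ref{d.projection_name_real} already yields $s(b)\subseteq s(\bar b)$ without any refinement, so the tree step is harmless but unnecessary.
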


\begin{remark} Whenever $\dot{f},\dot{g}$ are as above, we will say that $\dot{g}$ is a {\emph{canonical projection}} of $\dot{f}$ below $x$.
\end{remark}
\begin{proof} Recursively we will construct the antichains $\mathcal{A}_n$. Along this construction we will guarantee that for all $a\in\mathcal{A}_n$,
$a^\prime\in\mathcal{A}_{n+1}$either $a^\prime\leq a$ or $a\bot a^\prime$, and that if $a^\prime\leq a$, then $s(a^\prime)$ end-extends $s(a)$.

First we will define $\mathcal{A}_1$. Let $t\in\mathbb{A}$ be an arbitrary extension of $x$. Since $x$ is a reduction of $y$, there is $\hat t\in \mathbb{B}$ such that $\hat{t}\leq_{\mathbb{B}} t,y$. Therefore there is $b\in\mathcal{B}_1$ such that $\hat{t}$ and $b$ are compatible with a common extension $\bar{t}$.
Then in particular $\bar{t}\leq_{\mathbb{B}} t$ and so we can find a reduction $a$ of $\bar{t}$ extending $t$. Since $\bar{t}\leq b$, $a$ is also a reduction of $b$. Define $s(a)=s(b)$, $a(t)=a$. Let $\mathcal{A}_1$ be a maximal antichain in the dense below $x$ set $D_1=\{a(t):t\leq x\}$.

Suppose $\mathcal{A}_n$ has been defined. Let $a\in\mathcal{A}_n$ and $t\leq_{\mathbb{A}} a$. By the inductive hypothesis, there is $b\in\mathcal{B}_n$ such that $a$ is a reduction of $b$ and $s(a)=s(b)$. Then $t$ is compatible in  $\mathbb{B}$ with $b$ with common extension $\hat{t}$. Then in particular $\hat{t}\leq_{\mathbb{B}} y$ and so there is $\bar{b}\in\mathcal{B}_{n+1}$ such that $\hat{t}$ is compatible with $\bar{b}$ in $\mathbb{B}$ with common extension $\tilde{t}$. Then in particular $\tilde{t}\leq \bar{b}, b$ and so $s(b)$ is an initial segment of $s(\bar{b})$. Since $\tilde{t}\leq t$, it has a reduction $\bar{a}\leq_\mathbb{A} t$. Define $a(t)=\bar{a}$, $s(\bar{a})=s(\bar{b})$. Again since $\tilde{t}\leq\bar{b}$, $\bar{a}$ is also a reduction of $\bar{b}$.
Let $\mathcal{A}_{n+1,a}$ be a maximal antichain in the dense below $a$ set $\{\bar{a}(t):t\leq_{\mathbb{A}}a\}$ and let $\mathcal{A}_{n+1}=\bigcup_{a\in\mathcal{A}_n} \mathcal{A}_{n+1,a}$.
\end{proof}

\subsubsection{Canonical Projection of a Name for a Slalom}

\begin{definition}\label{d.projection_name_slalom} Let $\mathbb{B}$ be a partial order and $y\in\mathbb{B}$. Let  $\sigma\in{^{<\omega}(^{<\omega}[\omega])}$ be such that $\forall i<|\sigma|(|\sigma(i)|=i)$, and for each $n\geq 1$ let $\mathcal{B}_n$ be a maximal antichain below $y$. We will say that the pair $(\check{\sigma},\dot{\phi})$ is {\it{a nice name for an
element of $\mathbb{L}$ below $y$}}, where $\dot{\phi}=\{(b,\sigma(b))\}_{b\in\mathcal{B}_n,n\geq 1}$, if the following conditions hold:
\begin{enumerate}

{\item whenever $n\geq 1$ and $b\in\mathcal{B}_n$ then $\sigma(b)\in{^n(^{<\omega}[\omega])}$}
{\item whenever $1\leq n\leq|\sigma|$ and $b\in\mathcal{B}_n$ then  $\sigma(b)=\sigma\rest n$}
{\item whenever $n > |\sigma|$, then $\sigma\subset\sigma(b)$ and $\forall i: |\sigma|\leq i<n(|\sigma(b)(i)|\leq |\sigma|)$,}
{\item whenever $m>n\geq|\sigma|$, $b\in \mathcal{B}_n$, $b^\prime\in \mathcal{B}_m$ and $b,b^\prime$ are compatible, then $\sigma(b)$ is an initial segment of $\sigma(b^\prime)$.}
\end{enumerate}
\end{definition}

\begin{remark} If $(\check{\sigma},\dot{\phi})$ where $\dot{\phi}=\{(b,\sigma(b))\}_{b\in\mathcal{B}_n,n\geq 1}$ is a nice name for an element of $\mathbb{L}$ below $y$, then $y\Vdash (\check{\sigma},\dot{\phi})\in\mathbb{L}$ and for all $n\in\omega$, $b\in\mathcal{B}_n$ $b\Vdash\dot{\phi}\restrict n= \check{s(b)}$.
\end{remark}

\begin{lemma}\label{l.projection_name_slalom} Let $\mathbb{A}$ be a complete suborder of $\mathbb{B}$, $y\in \mathbb{B}$ and $x$ a projection of $y$ to $\mathbb{A}$. Let $(\check{\sigma},\dot{\phi})$ where $\dot{\phi}=\{(b,\sigma(b))\}_{b\in\mathcal{B}_n,n\geq 1}$ be a nice name for an element of $\mathbb{L}$ below $y$. Then
there is an $\mathbb{A}$-nice name $(\check{\sigma},\dot{\psi})$ where $\dot{\psi}=\{(a,\sigma(a))\}_{a\in\mathcal{A}_n, n\geq 1}$ for an element in $\mathbb{L}$ below $x$ such that for all $n\geq 1$, for all $a\in\mathcal{A}_n$, there is $b\in\mathcal{B}_n$ such that $a$ is a reduction of $b$ and $\sigma(a)=\sigma(b)$.
\end{lemma}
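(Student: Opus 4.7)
The plan is to mirror the recursive construction used in Lemma~\ref{l.projection_name_real}, adapting it to the slalom setting by treating the initial segment of length $|\sigma|$ separately and then repeating the real-name argument for the tail. Along the recursion I will maintain the inductive invariants that for each $n\geq 1$ and each $a\in\mathcal{A}_n$ there is a designated $b\in\mathcal{B}_n$ such that $a$ is a reduction of $b$ and $\sigma(a)=\sigma(b)$, and that the $\mathcal{A}_n$ refine one another in the sense that for $a\in\mathcal{A}_n$ and $a'\in\mathcal{A}_{n+1}$ either $a'\leq_{\mathbb{A}} a$ or $a\bot_{\mathbb{A}} a'$, with $\sigma(a')$ end-extending $\sigma(a)$ in the former case.

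For $1\leq n\leq |\sigma|$ the definition is forced: every $b\in\mathcal{B}_n$ has $\sigma(b)=\sigma\rest n$, so I simply take $\mathcal{A}_n=\{x\}$ (or any fixed maximal antichain below $x$) and declare $\sigma(a)=\sigma\rest n$ for every $a$ in it; the required reduction property follows immediately because any extension of $x$ in $\mathbb{A}$ is compatible with some $b\in\mathcal{B}_n$ by the reduction property of $x$ for $y$, and all such $b$ carry the same label $\sigma\rest n$. For the base of the genuine recursion, starting at $n=|\sigma|$, this also provides the required matching $a\mapsto b$.

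For the inductive step with $n\geq|\sigma|$, suppose $\mathcal{A}_n$ has been constructed. Fix $a\in\mathcal{A}_n$ and its associated $b\in\mathcal{B}_n$ with $\sigma(a)=\sigma(b)$. For each $t\leq_{\mathbb{A}} a$, compatibility of $t$ with $b$ in $\mathbb{B}$ (using that $a$ is a reduction of $b$) yields a common extension $\hat t\leq_{\mathbb{B}} t,b$. Since $\hat t\leq_{\mathbb{B}} y$, there is $\bar b\in\mathcal{B}_{n+1}$ compatible with $\hat t$, with common extension $\tilde t$; by condition~(4) of Definition~\ref{d.projection_name_slalom} and the fact that $\tilde t\leq_{\mathbb{B}} b,\bar b$, the sequence $\sigma(b)$ is an initial segment of $\sigma(\bar b)$, and $\sigma(\bar b)$ satisfies the slalom bound of clause~(3). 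Using Lemma~\ref{l.strongsuslin}, pick a reduction $\bar a$ of $\tilde t$ to $\mathbb{A}$ with $\bar a\leq_{\mathbb{A}} t$; then $\bar a$ is also a reduction of $\bar b$. Setting $\sigma(\bar a)=\sigma(\bar b)$ and letting $\mathcal{A}_{n+1,a}$ be a maximal antichain in the set $\{\bar a(t):t\leq_{\mathbb{A}} a\}$, which is dense below $a$, gives the desired refinement; put $\mathcal{A}_{n+1}=\bigcup_{a\in\mathcal{A}_n}\mathcal{A}_{n+1,a}$, which is a maximal antichain below $x$.

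The resulting pair $(\check\sigma,\dot\psi)$ with $\dot\psi=\{(a,\sigma(a))\}_{a\in\mathcal{A}_n,n\geq 1}$ then satisfies clauses~(1)--(4) of Definition~\ref{d.projection_name_slalom} by construction: clauses~(1) and~(2) hold because $\sigma(a)=\sigma(b)$ for the matched $b$, clause~(3) because the slalom bound $|\sigma(b)(i)|\leq|\sigma|$ is inherited from $\sigma(b)$, and clause~(4) from the end-extension invariant. The only point that requires care, and which I expect to be the main subtlety rather than a serious obstacle, is to verify that the slalom bound $|\sigma(a)(i)|\leq|\sigma|$ in clause~(3) is preserved at every step; this is immediate once one observes that this bound is copied directly from the corresponding $b\in\mathcal{B}_n$, and does not interact with the reduction mechanism at all. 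Hence the proof is essentially identical in structure to that of Lemma~\ref{l.projection_name_real}, with the additional bookkeeping of carrying $\sigma$ along as a fixed initial segment.
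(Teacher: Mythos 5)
Your overall strategy is exactly the one the paper intends: the paper's proof of Lemma~\ref{l.projection_name_slalom} consists of the single sentence ``similar to the proof of Lemma~\ref{l.projection_name_real}'', and your inductive step for $n\geq|\sigma|$ is a faithful and correct adaptation of that construction (including the correct use of clause~(4) of Definition~\ref{d.projection_name_slalom} to get the end-extension of labels, and of Lemma~\ref{l.strongsuslin} to choose the reduction $\bar a\leq_{\mathbb A}t$).

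There is, however, one genuine flaw: your treatment of the levels $1\leq n\leq|\sigma|$. You set $\mathcal{A}_n=\{x\}$ and claim the reduction property ``follows immediately because any extension of $x$ in $\mathbb{A}$ is compatible with some $b\in\mathcal{B}_n$.'' That conflates two different statements. For $x$ to be a reduction of a \emph{fixed} $b\in\mathcal{B}_n$, \emph{every} extension of $x$ in $\mathbb{A}$ must be compatible with that same $b$; knowing only that each extension is compatible with \emph{some} element of $\mathcal{B}_n$ is strictly weaker (take $\mathcal{B}_n=\{b_1,b_2\}$ with some extensions of $x$ compatible only with $b_1$ and others only with $b_2$). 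The fact that all $b\in\mathcal{B}_n$ carry the same label $\sigma\rest n$ makes the labels well-defined but does not make $x$ a reduction of any single $b$. This matters twice: first, the lemma's conclusion explicitly demands, for \emph{all} $n\geq1$, that each $a\in\mathcal{A}_n$ be a reduction of some $b\in\mathcal{B}_n$; second, and more seriously, your inductive step at $n=|\sigma|$ begins ``fix $a\in\mathcal{A}_n$ and its associated $b\in\mathcal{B}_n$ \dots compatibility of $t$ with $b$ (using that $a$ is a reduction of $b$)'', so the base of your recursion rests on exactly the property that $\mathcal{A}_{|\sigma|}=\{x\}$ fails to have. The repair is immediate: for each $n\leq|\sigma|$ run the same base-case construction as in Lemma~\ref{l.projection_name_real} (for $t\leq_{\mathbb A}x$ find $\hat t\leq_{\mathbb B}t,y$, then $b\in\mathcal{B}_n$ and a common extension $\bar t$ of $\hat t$ and $b$, then a reduction $a(t)\leq_{\mathbb A}t$ of $\bar t$, hence of $b$), label everything by $\sigma\rest n$, and take $\mathcal{A}_n$ maximal in $\{a(t):t\leq_{\mathbb A}x\}$. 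With that change the rest of your argument goes through.
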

\begin{proof} Similar to the proof of~\ref{l.projection_name_real}.
\end{proof}

Another forcing notion which will be of interest for us is Hechler forcing $\mathbb{H}$. Recall that it consists of pairs $(s,f)\in{^{<\omega}\omega\times{^\omega\omega}}$ such that $s\subseteq f$ and extension relation $(s,f)\leq (t,g)$ iff $s$ end-extends $t$ and for all $i\in\omega(g(i)\leq f(i))$. Clearly, if $y$ forces that $(\check{s},\dot{f})$ is a condition in $\mathbb{H}$
and $\dot{f}$ is a nice name for a real below $y$, then $\dot{f}$ has a canonical projection $\dot{f}^\prime$ below $x$ such that $x$ forces that $(\check{s},\dot{f}^\prime)$ is a Hechler condition.

\subsubsection{Suslin, $\sigma$-Suslin and good $\sigma$-Suslin posets}

Recall that a \emph{Suslin poset} is a poset $(\BbS,\leq_{\BbS})$ such that $\BbS$$(\subseteq\omega^\omega)$, $\leq_{\BbS}$ and $\perp_{\BbS}$ have ${\mathbf\Sigma}^1_1$ definitions (with parameters in the ground model.) For a Suslin forcing $\BbS$, the ordering $\leq_{\BbS}$ will be defined by the $\mathbf\Sigma^1_1$ predicate in whatever model we work in (that has a code for $\leq_{\BbS}$.) The key property of Suslin forcings that we need is the following well-known fact. 

\begin{lemma}\label{l.suslinembedd}
Let $\P$ and $\Q$ be posets and let $\BbS$ be a c.c.c. Suslin poset. If $\P\lessdot\Q$ then $\P*\dot{\BbS}\lessdot \Q*\dot{\BbS}$ (where $\dot{\BbS}$ denotes the name of $\BbS$ for the relevant poset.)
\end{lemma}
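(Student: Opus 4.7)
The plan is to verify the three clauses of the definition of $\lessdot$ from Subsection~\ref{complete_embeddings}. All three rely on absoluteness of the $\Sigma^1_1$ predicates defining $\BbS$, $\leq_\BbS$ and $\BbS$-compatibility (Mostowski absoluteness). For inclusion and preservation of $\leq$: every $\P$-name $\dot s$ is automatically a $\Q$-name, and for any $\Q$-generic $G_\Q$ containing $p\in\P$, $G_\P:=G_\Q\cap\P$ is $\P$-generic with $\dot s^{G_\Q}=\dot s^{G_\P}$; Mostowski absoluteness of the $\Sigma^1_1$ formulas defining $\BbS$ and $\leq_\BbS$ between $V[G_\P]$ and $V[G_\Q]$ yields the required preservations.

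For the nontrivial direction of $\perp$-preservation, given $(q,\dot t)\leq_{\Q*\dot\BbS}(p,\dot s),(p',\dot s')$, I would produce a common $\P*\dot\BbS$-extension of $(p,\dot s)$ and $(p',\dot s')$. By Lemma~\ref{l.strongsuslin}, pick a reduction $r\in\P$ of $q$ with $r\leq_\P p,p'$. For any $\P$-generic $G_\P\ni r$ extended to a $\Q$-generic $G_\Q\ni q$, $\dot t^{G_\Q}$ witnesses compatibility of $\dot s^{G_\P}$ and $\dot s'^{G_\P}$ in $\BbS^{V[G_\Q]}$; Mostowski absoluteness of this $\Sigma^1_1$ compatibility predicate yields a witness $z\in\BbS^{V[G_\P]}$, and a standard mixing argument produces a $\P$-name $\dot z$ and condition $r^*\leq_\P r$ with $r^*\Vdash_\P\dot z\leq_\BbS\dot s,\dot s'$, giving the desired common extension $(r^*,\dot z)$.

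For existence of reductions, given $(q,\dot t)\in\Q*\dot\BbS$, pick a reduction $r\in\P$ of $q$ and apply Lemma~\ref{l.projection_name_real} to the nice-name form of $\dot t$ (WLOG with $s$ injective on each $\mathcal B_n$) to obtain a canonical $\P$-projection $\dot t'$ below $r$: each $a\in\mathcal A_n$ is a reduction of some $b(a)\in\mathcal B_n$ with $s(a)=s(b(a))$. The candidate reduction is $(r,\dot t')$, and the items to verify are (i) $r\Vdash_\P\dot t'\in\dot\BbS$, and (ii) $(r,\dot t')$ reduces $(q,\dot t)$ in $\P*\dot\BbS$. Item (ii) reduces to the compatibility-absoluteness argument of the preceding paragraph, applied to $q_0\leq_\Q p,q$ for any $(p,\dot s)\leq_{\P*\dot\BbS}(r,\dot t')$ and using $p\Vdash_\P\dot s\leq_\BbS\dot t'$.

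The main obstacle is item (i). Under the injectivity of $s\restriction\mathcal B_n$, the canonical-projection witnesses $\{b(a_n):n\geq 1\}$ form a pairwise-compatible family (their initial segments extend each other), and they are pairwise compatible with every $a_m\in G_\P$: for $m\geq n$, $a_m\leq_\P a_n$ is compatible with $b(a_n)$ by the reduction property of $a_n$; for $m<n$, the relation $a_n\leq_\P a_m$ combined with $a_n$-$b(a_n)$ compatibility yields $a_m$-$b(a_n)$ compatibility. Hence a $\Q$-generic $G_\Q\supseteq G_\P$ containing all $b(a_n)$ can be constructed, yielding $\dot t^{G_\Q}=\dot t'^{G_\P}\in\BbS^{V[G_\Q]}$. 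Mostowski absoluteness of the $\Sigma^1_1$ defining formula of $\BbS$ between $V[G_\Q]$ and $V[G_\P]$ then gives $\dot t'^{G_\P}\in\BbS^{V[G_\P]}$, as required.
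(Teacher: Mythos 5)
Your treatment of clauses (1) and (2) of $\lessdot$ is essentially correct (for the \emph{downward} absoluteness of compatibility, which you use in the $\perp$-preservation step, you should note that it relies on compatibility being $\Pi^1_1$ via the $\Sigma^1_1$-definability of $\perp_{\BbS}$, not merely on upward absoluteness of $\Sigma^1_1$). The gap is in item~(i) of your clause~(3). First, the normalization ``WLOG $s$ is injective on each $\mathcal B_n$'' is not available: distinct $b,b'\in\mathcal B_n$ with $s(b)=s(b')$ are incompatible, so they cannot be merged, and Definition~\ref{d.projection_name_real} does not let you drop any of them from a maximal antichain. Without injectivity, your pairwise-compatibility argument for $\{b(a_n)\}$ fails, since condition~(2) of Definition~\ref{d.projection_name_real} says compatibility implies initial-segment extension, not the converse. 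More fundamentally, the assertion that a $\Q$-generic $G_\Q$ over $V$ with $G_\Q\cap\P=G_\P$ and $b(a_n)\in G_\Q$ for all $n$ ``can be constructed'' is circular: if such a $G_\Q$ existed then $\dot t^{G_\Q}=\bigcup_n s(b(a_n))=\dot t'^{G_\P}$ would lie in $\BbS^{V[G_\Q]}$ (because $q\in G_\Q$), and hence in $\BbS^{V[G_\P]}$ by absoluteness --- but that conclusion \emph{is} item~(i). Note that item~(i) is exactly the content of ``goodness'' in Definition~\ref{d.good_suslin}(3): Lemma~\ref{l.projection_name_real} only produces a nice name for a real, and goodness is the extra hypothesis, introduced in this paper precisely for this reason, that the projection again names an $\BbS$-condition. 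Lemma~\ref{l.suslinembedd}, however, is stated for arbitrary c.c.c.\ Suslin $\BbS$, where goodness is not assumed, so your route via canonical projections cannot succeed without further hypotheses.

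The standard proof of Lemma~\ref{l.suslinembedd} does not build an explicit reduction at all; it verifies the equivalent clause~(3$'$) on preservation of maximal antichains, together with clause~(2). Given a maximal antichain $A$ in $\P*\dot\BbS$ and $(q,\dot t)\in\Q*\dot\BbS$, let $G_\Q\ni q$ be $\Q$-generic and set $G_\P=G_\Q\cap\P$. Maximality of $A$ gives that $A_{G_\P}=\{\dot s^{G_\P}:(p,\dot s)\in A,\ p\in G_\P\}$ is predense in $\BbS^{V[G_\P]}$; since $\BbS$ is c.c.c., predensity is witnessed by a \emph{countable} maximal antichain, and ``this countable set is a maximal antichain of $\BbS$'' is (at worst) $\Pi^1_2$, hence absolute between $V[G_\P]$ and $V[G_\Q]$ by Shoenfield. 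Therefore $\dot t^{G_\Q}$ is compatible in $\BbS^{V[G_\Q]}$ with some $\dot s^{G_\P}$ where $(p,\dot s)\in A$ and $p\in G_\P\subseteq G_\Q$; a condition in $G_\Q$ below both $p$ and $q$ forcing this compatibility then yields a common extension of $(q,\dot t)$ and $(p,\dot s)$ in $\Q*\dot\BbS$. This argument sidesteps entirely the problem of naming a concrete $\BbS$-condition over $\P$, which is where your attempt runs into trouble.
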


We will work with the following strengthening of the notion of Suslin forcing:

\begin{definition}\label{d.sigma_suslin}
Let $(\BbS,\leq_{\BbS})$ be a Suslin forcing notion, whose conditions can be written in the form $(s,f)$ where $s\in{^{<\omega}\omega}$ and $f\in{^\omega\omega}$. We will say that $\BbS$ is {\emph{$n$-Suslin}}  if whenever $(s,f)\leq_{\BbS} (t,g)$ and $(t,h)$ is a condition in $\BbS$ such that $h\rest {n\cdot |s|}= g\rest {n\cdot |s|}$ then $(s,f)$ and $(t,h)$ are compatible. A forcing notion is called {\emph{$\sigma$-Suslin}} if it is
$n$-Suslin for some $n$.
\end{definition}

Clearly, if $\BbS$ is $n$-Suslin and $m\geq n$, then $\BbS$ is also $m$-Suslin. If $\BbS$ is $n$-Suslin and $(s,f)$ and $(s,g)$ are conditions in $\BbS$ such that $f\rest {n\cdot |s|}=g\rest {n\cdot|s|}$ then $(s,f)$ and $(s,g)$ are compatible. Thus every $\sigma$-Suslin forcing notion is $\sigma$-linked and so has the Knaster property. Hechler forcing $\mathbb{H}$ is $1$-Suslin, localization $\mathbb{L}$ is $2$-Suslin.

\begin{definition}\label{d.good_suslin}Let $(\BbS,\leq_{\BbS})$ be a Suslin forcing notion, whose conditions can be written in the form $(s,f)$ where $s\in^{<\omega}\omega$, $f\in{^\omega\omega}$.
\begin{enumerate}
{\item The pair $(\check{s},\dot{f})$ is {\emph{a nice name for a condition}} in $\BbS$ below $y\in\mathbb{B}$ if $\dot{f}$ is a nice name for a real below $y$ and $y\Vdash_{\mathbb{B}}(\check{s},\dot{f})\in\dot{\BbS}$. }
{\item Whenever $(\check{s},\dot{f})$ is a nice name for a condition in $\BbS$ below $y\in\mathbb{B}$, $x\in\mathbb{A}$ is a reduction of $y$ and $\dot{g}$
is a canonical projection of $\dot{f}$ below $x$ such that $x\Vdash_{\mathbb{B}}(\check{s},\dot{g})\in\dot{\BbS}$, we will say that $(\check{s},\dot{g})$ is {\emph{a canonical projection of the nice name}} $(\check{s},\dot{f})$ below $x$.}
{\item $\BbS$ is called {\emph{good}} if every nice name for a condition in $\BbS$ below $y$ has a canonical projection below $x$, whenever $x\in\mathbb{A}$ is a reduction of $y\in\mathbb{B}$.}
\end{enumerate}
\end{definition}

As an immediate corollary of Lemma~\ref{l.projection_name_slalom} we obtain that the localization poset $\mathbb{L}$ is a good $\sigma$-Suslin forcing notion.
It is straightforward to verify that the Hechler poset $\mathbb{H}$ is good $\sigma$-Suslin.

\subsubsection{Finite function posets}

\begin{definition}\label{finite_function_poset}  Let $A$ be fixed sets and let $\Q$ be a poset of pairs $p=(s^p,F^p)$ where
$s^p\subseteq A\times\omega\times\omega$ is finite, for every $a\in A$, $s^p_a=\{(n,m): (a,n,m)\in s\}$ is a finite partial function and
$F\in [\What{A}]^{<\omega}$. For $p\in\Q$ let $\oc(s^p)=\{a: \exists n,m(a,n,m)\in s^p\}$ and let
$\hbox{oc}(p)=\oc(s^p)\cup\{a:a\;\hbox{is a letter from a word in}\; F^p\}$. For $B\subseteq A$ let $p\rest B=(s^p\cap B\times\omega\times\omega, F^p)$, let $p\srestrict B=(s^p\cap B\times\omega\times\omega, F^p\cap \What{B})$ and let $\hbox{dom}(\Q)=A$. Then $\Q$ is a {\emph{finite function poset (with side conditions)}} if:
\begin{enumerate}[\indent (i)]
\item "Restrictions" whenever $p,q\in \Q$, $B\subseteq A$ then
 \begin{itemize}
 \item $p\restrict B$, $p\srestrict B$ are conditions in $B$, and $p\restrict B\leq p\srestrict B$,
 \item if $p\leq q$ then $p\srestrict B\leq q\srestrict B$.
 \end{itemize}
\item "Extensions" whenever $p=(s,F)\in\Q$
 \begin{itemize}
 \item and $t\subseteq A\times\omega\times\omega$ is finite such that $\oc(p)\cap\oc(t)=\emptyset$, then $(s\cup t,F)\leq p$;
 \item and $E\in [\What{A}]^{<\omega}$ contains $F$, then $(s,E)\leq (s,F)$.
 \end{itemize}
\end{enumerate}
Whenever $B\subseteq\dom(\Q)$ by $\Q_B$ we denote the suborder $\{p\srestrict B: p\in\Q\}$.
\end{definition}

\begin{definition}\label{d.strong_reduction}
Let $\Q$ be a finite function poset. We say that $\Q$ has {\emph{the strong embedding property}} if whenever $A_0\subseteq \dom(\Q)$,
and $p=(s,F)\in\Q$, then there is $t_0\subseteq (oc(s)\cap A_0)\times\omega\times\omega$ such that $s\restrict A_0\subseteq t_0$,
$(t_0, F\cap \What{A_0})\leq_{\Q_{oc(p)\cap A_0}} p\srestrict A_0$ and whenever $(t,E)\leq_\Q (t_0, F\cap\What{A_0})$ is such that
$\oc(t)$ and $\oc(E)$ are disjoint from $\oc(p)\backslash A$, then $(t\cup s,F)\leq (s,F)$ and $(t\cup s,E)\leq (t,E)$. We say that
$(t_0, F\cap\What{A_0})$ is a {\emph{strong reduction}} of $p$ and $(s\cup t,F\cup F)$ a {\emph{canonical extension}} of $(s,F)$ and $(t,E)$.
\end{definition}

\begin{remark}\label{d.ffe}
Note that if $\Q$ is a finite function poset with the strong embedding property then whenever $A\subseteq B\subseteq\dom(\Q)$, $C\subseteq\dom(\Q)$ are
such that $C\cap B=A$, for every condition $p\in\Q\restrict B$ there is $p_0\leq_{\Q\restrict A} p\restrict A$ such that $\oc(p_0)=\oc(p)\cap A$ and if $q_0$ is a $\Q\restrict C$-extension of $p_0$, then $q_0$ is compatible with $p$. We will say that $p_0$ is a strong $\Q\rest A$-reduction of $p$.
\end{remark}

\begin{lemma}
$\Q_{A,\rho}$ is a finite function poset with the strong embedding property.
\end{lemma}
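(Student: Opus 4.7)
The plan is to verify each clause of Definition~\ref{finite_function_poset} essentially by inspection, using the key observation that for any word $w$ and any $s \subseteq A \times \omega \times \omega$, the evaluation $e_w[s,\rho]$ depends only on $s_a$ for letters $a$ occurring in $w$. The strong embedding property of Definition~\ref{d.strong_reduction} has in fact already been established as Remark~\ref{r.strong_embedding_mcg}, so the bulk of the work reduces to bookkeeping.

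For the \emph{Restrictions} clause, fix $p=(s,F)\in \Q_{A,\rho}$ and $A_0\subseteq A$. Both $p\restrict A_0$ and $p\srestrict A_0$ are finite with injection-slices, hence valid conditions of $\Q_{A,\rho}$ and $\Q_{A_0,\rho}$ respectively, and $p\restrict A_0 \leq p\srestrict A_0$ holds trivially because they share the same $s$-part and the $F$-part only shrinks. For the monotonicity $p \leq q \Rightarrow p\srestrict A_0 \leq q\srestrict A_0$, any word $w \in F^q \cap \What{A_0 \cup B}$ uses only letters from $A_0 \cup B$, so $e_w[s^p\restrict A_0,\rho] = e_w[s^p,\rho]$ and likewise for $s^q$; the fixed-point implication then transfers directly from $p \leq q$. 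For the \emph{Extensions} clause, when $\oc(t)\cap\oc(p)=\emptyset$ and $(s\cup t,F)$ is still a valid condition, every $w\in F$ uses only letters in $\oc(F)\subseteq\oc(p)$, which is disjoint from $\oc(t)$; hence $e_w[s\cup t,\rho] = e_w[s,\rho]$ and $(s\cup t,F)\leq (s,F)$ is immediate. Enlarging $F$ to some finite $E\supseteq F$ imposes no new fixed-point constraint on old words, so $(s,E)\leq (s,F)$ is trivial as well.

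Finally, the strong embedding property is precisely the content of Remark~\ref{r.strong_embedding_mcg}, which packages the stronger conclusion established inside the proof of Claim~\ref{completeclaim}: given $A_0\subseteq A$ and $p=(s,F)\in \Q_{A,\rho}$, repeated applications of Corollary~\ref{saturate} to the $A_0$-syllables of the words in $F\setminus\What{A_0\cup B}$ produce the required strong reduction $t_0\subseteq (\oc(s)\cap A_0)\times\omega\times\omega$ with $\oc(t_0)=\oc(s)\cap A_0$ and $(t_0,F\cap\What{A_0\cup B})\leq_{\Q_{\oc(p)\cap A_0}} p\srestrict A_0$. The canonical-extension claim—that any $(t,E)\leq (t_0,F\cap\What{A_0\cup B})$ whose occurrences avoid $\oc(p)\setminus A_0$ yields a common extension $(s\cup t, F\cup E)$ of $p$ and $(t,E)$—then follows from the same syllable-by-syllable evaluation argument carried out in Claim~\ref{completeclaim}. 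The one subtle point, namely that extensions living in $A\setminus A_0$ might a priori create new fixed points of mixed-alphabet words in $F$, is handled precisely by the saturation built into $t_0$, which forces every potential ``return'' through an $A_0$-letter to already be realized. This is the one nontrivial ingredient in the whole argument, but it has already been proved, so the present lemma requires no new work beyond assembling the references.
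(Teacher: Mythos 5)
Your proof is correct and follows the same route as the paper, which states this lemma without proof precisely because the clauses of Definition~\ref{finite_function_poset} follow by inspection (using the observation that $e_w[s,\rho]$ depends only on the $s_a$ for $a\in\oc(w)$) and the strong embedding property is exactly the content of Remark~\ref{r.strong_embedding_mcg}, extracted from the proof of Claim~\ref{completeclaim}. You correctly identify the one nontrivial point: the saturation built into $t_0$ via Corollary~\ref{saturate} is what prevents an extension with occurrences outside $\oc(p)\setminus A_0$ from creating new fixed points of the mixed-alphabet words in $F$, and that work was already done inside Lemma~\ref{complete}.
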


Another example of a finite function poset with the strong embedding property is the following forcing notion $\D_A$. Let $A$ be a nonempty set and let
$\D_A$ be the poset of all pairs $(s^p, F^p)$ where $s^p\subseteq A\times\omega\times 2$ is a finite set such that for all $a\in A$, $s^p_a=\{(n,m): (a,n,m)\in s\}$ is a finite partial function and $F\in [A]^{<\omega}$.  The condition $q$ is said to extend $p$ iff $s^q\supset s^p$, $F^q\supset F^p$ and for all $a,b\in F^p$ we have that $s^a_q\cap s^b_q\subseteq s^a_p\cap s^b_p$. If $|A|>\omega$, then $\D_A$ adds a maximal almost disjoint family of size $|A|$.

\subsection{Two-sided templates} If $(L,\leq)$ is a linearly ordered set and $x\in L$, we let $L_x=\{y\in L:y<x\}$ and $L_x^==\{y\in L: y\leq x \}$. If $L_0\subseteq L$ is a distinguished subset of $L$ and $A\subseteq L$, then the \emph{$L_0$-closure} of $A$ is defined as
$$
\cl_{L_0}(A)=A\cup\bigcup_{x\in A} L_x\cap L_0,
$$
and we will say that $A$ is \emph{$L_0$-closed} if $A=\cl_{L_0}(A)$. Note that $\cl_{L_0}(A)$ is the smallest set $B\supseteq A$ with the property that if $x\in B$ then $L_x\cap L_0\subseteq B$. We will usually drop mention of $L_0$ when it is clear from the context, and write ``closed'' instead of ``$L_0$-closed'' and write $\cl$ instead of $\cl_{L_0}$.

\begin{definition}[J. Brenlde,~\cite{brendle03}]\label{d.template}
A \emph{two-sided template} is a $4$-tuple $\mathcal T=((L,\leq),\mathcal I, L_0, L_1)$ consisting of a linear ordering $(L,\leq)$, a family $\mathcal I\subseteq\mathcal P(L)$, and a decomposition $L=L_0\cup L_1$ into two disjoint pieces such that the following holds:
\begin{enumerate}
\item $\mathcal I$ is closed under finite intersections and unions, and $\emptyset,L\in\mathcal I$.
\item If $x,y\in L$, $y\in L_1$ and $x<y$ then there is $A\in\mathcal I$ such that $A\subseteq L_y$ and $x\in A$.
\item If $A\in\mathcal I$, $x\in L_1\backslash A$, then $A\cap L_x\in \mathcal I$.
\item The family $\{A\cap L_1: A\in \mathcal I\}$ is well-founded when ordered by inclusion.
\item All $A\in\mathcal I$ are $L_0$-closed.
\end{enumerate}
Given a two-sided template $\mathcal T$ as above, $x\in L$ and $A\in\mathcal I$, we define
$$
\mathcal I_A=\{B\in \mathcal I: B\subset A\},
$$
$$
\mathcal I_x=\{B\in\mathcal I: B\subseteq L_x\}
$$
and $\mathcal I_{A,x}=\mathcal I_A\cap\mathcal I_x$. Finally we define the rank function $\Dp:\mathcal I\to\on$ by letting $\Dp(A)=0$ for $A\subseteq L_0$ and $\Dp(A)=\sup\{\Dp(B)+1: B\in\mathcal I\wedge B\cap L_1\subset A\cap L_1\}$. We define $\Rk(\mathcal T)$, the \emph{rank} of $\mathcal T$, to be $\Rk(\mathcal T)=\Dp(L)$.

If $A\subseteq L$ then $\mathcal T_A$ is the template $((A,\leq), \mathcal I\restrict A, L_0\cap A,L_1\cap A)$, where
$$
\mathcal I\restrict A=\{A\cap B:B\in \mathcal I\}.
$$
Note that if $A\in\mathcal I$ then $\Rk(\mathcal T_A)=\Dp(A)$. Moreover, if $A\subseteq L$ is arbitrary, then $\Rk(\mathcal T_A)\leq\Rk(\mathcal T)$.
\end{definition}

\subsection{Iteration along a two-sided template}
We are now ready to define the iteration along a two-sided template. This definition is a generalization of the definition of iterating ''Hechler forcing and adding a mad family along a template" given in ~\cite{brendle03}.

\begin{definition}\label{template_poset}
Let $\mathcal T=((L,\leq),\mathcal I,L_0,L_1)$ be a two-sided template, $\Q$ a finite function forcing with the strong embedding property such that $L_0=\dom(\Q)$ and $\BbS$ a good $\sigma$-Suslin forcing notion. The poset $\P(\mathcal T,\Q,\BbS)$ is defined recursively according the following clauses:
\begin{enumerate}
\item If $\Rk(\mathcal T)=0$, then $\P(\mathcal T,\Q,\BbS)=\Q_{L_0}$.
\item Assume that for all $\mathcal T$ with $\Rk(\mathcal T)<\kappa$, $\P(\mathcal T,\Q,\BbS)$ has been defined (and {\emph{is}} a poset, see comment below). Let $\mathcal T$ be a two-sided template of rank $\kappa$, and for $B\in\mathcal I$ of $\Dp(B)<\kappa$ let $\P_B=\P(\mathcal T_B,\Q,\BbS)$. We define $\P=\P(\mathcal T,\Q,\BbS)$ as follows:

\begin{enumerate}[(i)]
\item $\P$ consists of all pairs $P=(p,F^p)$ where $p$ is a finite partial functions with $\dom(p)\subseteq L$, $P\restrict_{L_0}:=(p\restrict L_0, F^p)\in\Q$ and if
$x_p\overset{\rm def} = \max\{\dom(p)\cap L_1\}$ is defined then there is $B\in\mathcal I_{x_p}$ (called a \emph{witness} that $P\in\P$) such that $P\srestrict L_{x_p}:=(p\restrict L_{x_p},F^p\cap\What{B})\in\P_B$, $p(x_p)=(\check{s}_x^p,\dot{f}_x^p)$, where $s^p_x\in{^{<\omega}\omega}$, $\dot{f}^p_x$ is a  $\P_B$ name for a real and $(P\srestrict L_{x_p},p(x_p))\in \P_B*\dot\BbS$.
\item For $P,Q\in\P$, let $Q\leq_{\P}P$ iff $\dom(p)\subseteq\dom(q)$, $(q\restrict L_0, F^q)\leq_{\Q} (p\restrict L_0,F^p)$, and if $x_p$ is defined then either
\begin{itemize}
\item[(ii.a)] $x_p<x_q$ and $\exists B\in\mathcal I_{x_q}$ such that $P\srestrict L_{x_q}, Q\srestrict L_{x_q}\in\P_B$ and $Q\srestrict L_{x_q}\leq_{\P_B} P\srestrict L_{x_q}$,
\end{itemize}
\noindent or
\begin{itemize}
\item[(ii.b)] $x_p=x_q$ and $\exists B\in\mathcal I_{x_q}$ witnessing $P,Q\in\P$, and such that  $$(Q\srestrict L_{x_q}, q(x_q))\leq_{\P_B*\dot\BbS} (P\srestrict L_{x_p}, p(x_p)).$$
\end{itemize}
\end{enumerate}
\end{enumerate}
Below we will call $B$ as in (ii.a) or (ii.b) a \emph{witness} to $Q\leq_{\P} P$.
\end{definition}

Whenever the side condition $F^p$ is clear from the context, we will denote the condition $P=(p,F^p)$ simply by the finite partial function $p$. Also for $A\subseteq L$, let $P\restrict A=(p\rest A,F^p)$ and $P\srestrict A=(p\restrict A, F^p\cap\What{A})$.
The definition is recursive and it is not clear to what extend it succeeds in defining a poset. However this will follow from Lemma~\ref{l.mainlemma}, stated below, which establishes not only transitivity but also a strong version of the complete embedding property, which is necessary for this definition to succeed. This Lemma is a generalization of the Main Lemma of~\cite{brendle03}. We note that if $A\in\mathcal I$ then it is clear from the definition that $\P_A\overset{\rm def}=\P(\mathcal T_A,\Q,\BbS)$ is a subset of $\P(\mathcal T,\Q,\BbS)$ and that the relation $\leq_{\P_A}$ is contained in $\leq_\P$. Clearly, the above definition also defines $\P_A=\P(\mathcal{T}_A,\Q,\BbS)$ for arbitrary $A\subseteq L$.

\begin{lemma}[Completeness of Embeddings]\label{l.mainlemma}
Let $\mathcal T=((L,\leq),\mathcal I, L_0, L_1)$ be a template, let $\Q$ be a finite function poset with $L_0=\dom(\Q)$ which satisfies the strong embedding property and let $\BbS$ be a good $\sigma$-Suslin poset. Let $B\in\mathcal{I}$, $A\subset B$ be closed. Then $\P_B$ is a partial order,  $\P_A\subset \P_B$ and even $\P_A\lessdot \P_B$. Furthermore, any $P=(p,F^p)\in \P_B$ has a canonical reduction $P_0=(p_0,F^{p_0})=p_0(P,A,B)\in\P_A$ such that
\begin{enumerate}[(i)]
{\item $\dom(p_0)=\dom(p)\cap A$, $F^{p_0}=F^p$,}
{\item $s^{p_0}_x=s_x^p$ for all $x\in \dom(p_0)\cap L_1$}
{\item $P\rest L_0=(p\rest L_0,F^{p_0})$ is a strong  $\Q_A$-reduction of $P\restrict L_0=(p\rest L_0, F^p)$}
\end{enumerate}
and such that whenever $D\in\mathcal{I}$, $B,C\subseteq D$, $C$ is closed, $C\cap B=A$ and $Q_0\in\P_C$ extends $P_0$, then there is $Q\in\P_D$ extending both $Q_0$ and $P$.
\end{lemma}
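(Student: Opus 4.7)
The plan is to carry out a transfinite induction on $\Dp(B)$, simultaneously verifying four statements for all $B\in\mathcal{I}$ of the given rank and all closed $A\subseteq B$: (a) $\leq_{\P_B}$ is transitive, so $\P_B$ is a partial order; (b) $\P_A\subseteq \P_B$ and $\leq_{\P_A}$ coincides with the restriction of $\leq_{\P_B}$; (c) the canonical reduction $p_0(P,A,B)\in\P_A$ with properties (i)--(iii) exists; and (d) the strong gluing clause involving $C,D$. Clause (d) is the genuine inductive strengthening: specializing to $D=B$ and $C=A$ recovers the usual reduction property, but without the freedom to combine a reduction on $L_{x_p}$ with $\Q$-data arriving from outside $B$, the pieces produced during the recursive construction of $p_0$ could not be reassembled into a single $\P_D$-condition, and the induction would not close.

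The base case $\Dp(B)=0$ has $B\subseteq L_0$, so $\P_B=\Q_B$ and $\P_A=\Q_A$, and all four claims reduce to Definition~\ref{d.strong_reduction} together with Remark~\ref{d.ffe}: the strong $\Q_A$-reduction serves as $p_0$, and the associated canonical extension supplies the gluing. In the inductive step, given $P=(p,F^p)\in\P_B$ with $x_p=\max(\dom(p)\cap L_1)$ defined, I would fix a witness $B'\in\mathcal{I}_{x_p}$ for $P\in\P_B$, enlarging it if necessary using closure of $\mathcal{I}$ under finite unions so that the finitely many further witnesses needed below also lie inside $B'$. If $x_p\in A$, set $A'=A\cap B'$; since $A'$ is closed and $\Dp(B')<\Dp(B)$, the IH yields $R=p_0(P\srestrict L_{x_p},A',B')\in\P_{A'}$. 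Because $p(x_p)=(\check{s}^p_{x_p},\dot{f}^p_{x_p})$ is a nice $\P_{B'}*\dot\BbS$-condition, goodness of $\BbS$ (Definition~\ref{d.good_suslin}) together with Lemma~\ref{l.projection_name_real} produces a canonical projection $(\check{s}^p_{x_p},\dot g)$ below $R$, which we take as $p_0(x_p)$; the $L_0$-entries of $p$ above $x_p$ lying in $A$ are then attached via one further application of the strong $\Q$-embedding. If instead $x_p\notin A$, then since $A$ is closed we have $\dom(p_0)\cap L_1\subseteq L_{x_p}$, and template axiom~(3) lets the recursion continue inside a smaller element of $\mathcal{I}$; $p_0$ is then the recursive reduction of $P\srestrict L_{x_p}$ together with the strong $\Q$-reduction of the $L_0$-part. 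Properties (i)--(iii) are immediate from the construction.

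For the gluing clause (d), given $D\in\mathcal{I}$, $B,C\subseteq D$ with $C$ closed and $C\cap B=A$, and $Q_0\in\P_C$ extending $P_0$, the common extension $Q\in\P_D$ is built coordinate-wise: on $L_0$, the canonical extension in Definition~\ref{d.strong_reduction} merges the $L_0$-parts of $P$ and $Q_0$, whose occurrences outside $A$ are disjoint since $\dom(p)\cap\dom(q_0)\subseteq B\cap C=A$; on $L_{x_p}$, the inductive gluing clause applied to $P\srestrict L_{x_p}$ and $Q_0\srestrict L_{x_p}$ performs the merge; on the $x_p$-coordinate, the canonical projection defining $p_0(x_p)$ is by construction a reduction of $\dot{f}^p_{x_p}$, so the relation $q_0(x_p)\leq p_0(x_p)$ forces compatibility with $p(x_p)$ below the merged lower-level condition. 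A witness for $Q\in\P_D$ is obtained as the union of the witnesses for $P$ and $Q_0$, which lies in $\mathcal{I}$ by union-closure. The principal obstacle will be maintaining (d) with enough strength to survive the recursion: this forces careful bookkeeping of witnesses in $\mathcal{I}$ so that the inductive ``$A'$'' always sits inside the inductive ``$B'$'' (handled by union-closure of $\mathcal{I}$ together with template axioms (2) and (3)), and simultaneous verification of transitivity (a) so that relations of the form $Q_0\leq P_0$ behave functorially at every level of the recursion.
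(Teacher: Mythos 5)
Your overall architecture is right: the induction on $\Dp(B)$, the four simultaneous claims with (d) as the genuine strengthening, the case split on whether $x_p\in A$, and the need to track witnesses in $\mathcal I$ via union-closure and template axioms (2)--(3). The construction of $P_0$ is also essentially the paper's. But the decisive technical content of the lemma is missing from your treatment of the gluing in the case $x_p\in A$, and what you write there would not close the induction.

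You assert: ``the canonical projection defining $p_0(x_p)$ is by construction a reduction of $\dot f^{p}_{x_p}$, so the relation $q_0(x_p)\leq p_0(x_p)$ forces compatibility with $p(x_p)$ below the merged lower-level condition.'' This confuses a reduction of a \emph{name} with compatibility of the associated $\BbS$-\emph{conditions}. The names $\dot f^{p}_{x_p}$ (a $\P_{\bar B}$-name) and its canonical projection $\dot f^{p_0}_{x_p}$ (a $\P_{\bar A}$-name) are genuinely different names; when both are evaluated in the $\P_{\bar D}$-generic extension they need not define the same real, and the relation $q_0(x_p)\leq p_0(x_p)$ says nothing directly about the pair $\bigl(q_0(x_p),p(x_p)\bigr)$. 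Already for Hechler forcing this fails: $(s',\dot f')\leq (s,\dot g)$ does not make $(s',\dot f')$ compatible with $(s,\dot f)$ merely because $\dot g$ is ``a projection'' of $\dot f$. This is precisely why the hypothesis that $\BbS$ is a \emph{good $n$-Suslin} poset is in the statement, and the paper's Case 2 argument is built entirely around exploiting it. The actual argument (paper, Case 2 of the Complete Embeddings part) fixes $n$ with $\BbS$ $n$-Suslin and $m=|s^{q_0}_x|$, extends $\bar Q_0$ to $\hat Q_0$ deciding $\dot f^{p_0}_x\restriction n\cdot m$ to some $s'$, and then uses genericity over $\P_{\bar C}$, the antichain structure of nice names, and Lemma~\ref{l.strongsuslin} to locate a condition $T\in\P_{\bar D}$ which simultaneously forces $q_0(x)\leq_{\dot\BbS}p_0(x)$ and $\dot f^{p}_x\restriction n\cdot m=\dot f^{p_0}_x\restriction n\cdot m=s'$, with $s^{p}_x=s^{p_0}_x$. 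Only at that point does $n$-Suslinness license the conclusion that $q_0(x)$ and $p(x)$ have a common extension in $\dot\BbS$ below $T$. Your sketch replaces this entire genericity-and-Suslin argument with an appeal to the reduction property, which is not sufficient. Everything else in your outline is compatible with the paper's route, so the fix is local but nontrivial: you need to reintroduce the $\sigma$-Suslin hypothesis exactly at this step, together with the canonical-projection bookkeeping (Lemma~\ref{l.projection_name_real} and Definition~\ref{d.good_suslin}) that makes the finite-agreement argument go through.
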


Lemma \ref{l.mainlemma} is proved by induction on the rank of $\mathcal T$. It uses the following lemmas, which are helpful for making simple manipulations with the conditions of $\P(\mathcal T,\Q,\BbS)$. In Lemmas~\ref{domain} through~\ref{l.paste} assume that $\mathcal{T}$, $\Q$ and $\BbS$ are as in Definition~\ref{template_poset} and that the Completeness of Embeddings Lemma~\ref{l.mainlemma} has been established for all templates of Rank $<\Rk(\mathcal{T})$. Let $\PP=\PP(\mathcal{T},\Q,\BbS)$.

\begin{lemma}\label{domain} If $P=(p,F^P)$ and $Q=(q,F^q)$ are conditions in $\P$ such that $\oc(P)$ and $\oc(Q)$ are contained in $L_x$ for some $x\in L_1$ and $P\leq_\P Q$, then there is $B\in \mathcal{I}_x$ such that $Q\leq_{\P_B} P$.
\end{lemma}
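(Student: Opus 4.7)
The plan is to gather into a single member of $\mathcal I_x$ all the data that witnesses $P\in\P$, $Q\in\P$, and the comparison $P\leq_\P Q$, and then appeal to the inductive Completeness of Embeddings hypothesis for templates of rank strictly less than $\Rk(\mathcal T)$.

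First, I would extract the existing witnesses. Since $P,Q\in\P$, Definition~\ref{template_poset}(i) supplies, when $x_p$ and $x_q$ are defined, sets $B_P\in\mathcal I_{x_p}$ and $B_Q\in\mathcal I_{x_q}$ certifying that $P\srestrict L_{x_p}\in\P_{B_P}$ and $Q\srestrict L_{x_q}\in\P_{B_Q}$; both lie in $\mathcal I_x$ since $x_p,x_q<x$. From $P\leq_\P Q$, Definition~\ref{template_poset}(ii.a)/(ii.b) yields a further witness $B_0\in\mathcal I$, likewise contained in $L_x$. For each $y\in\oc(P)\cup\oc(Q)\subseteq L_x$, template property~(2), applied with the upper element $x\in L_1$, furnishes $A_y\in\mathcal I$ with $y\in A_y\subseteq L_x$.

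Then I would set
$$
B=B_0\cup B_P\cup B_Q\cup\bigcup\{A_y:y\in\oc(P)\cup\oc(Q)\},
$$
which by template property~(1) lies in $\mathcal I$ and, being a subset of $L_x$, belongs to $\mathcal I_x$. Since $x\in L_1$ and $B\cap L_1\subsetneq L\cap L_1$, we have $\Dp(B)<\Rk(\mathcal T)$, so the inductive Completeness of Embeddings Lemma is available for $B$ and its members $B_0,B_P,B_Q$.

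Finally, I would verify that $P,Q\in\P_B$ and $P\leq_{\P_B}Q$ working inside $\mathcal T_B$. Membership follows from Definition~\ref{template_poset}(i): $B_P\in\mathcal I|_B$ witnesses $P\in\P_B$, since the top-level $\BbS$-name $p(x_p)$, being a $\P_{B_P}$-name for an element of $\dot\BbS$, is via Lemma~\ref{l.suslinembedd} and the inductive complete embedding $\P_{B_P}\lessdot\P_B$ also a $\P_B$-name for an element of $\dot\BbS$; the analogous argument with $B_Q$ handles $Q$. The comparison $P\leq_{\P_B}Q$ is witnessed by $B_0$: the restricted comparison inside $\P_{B_0}$ given by Definition~\ref{template_poset}(ii) transfers to $\P_B$ through the inductive $\P_{B_0}\lessdot\P_B$, while the $\Q$-part on $L_0$ is unchanged. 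The degenerate cases where $x_p$ or $x_q$ is undefined reduce to pure $\Q$-comparisons and are immediate. The main point requiring care is propagating the $\BbS$-names through the various inductive complete embeddings, which is exactly what Lemma~\ref{l.suslinembedd} combined with the goodness of $\BbS$ provides.
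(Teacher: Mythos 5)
Your argument is correct and, modulo presentation, is the same as the paper's: you collect into a single member of $\mathcal I_x$ the witness for the comparison $Q\leq_\P P$ together with enough of $\oc(P)\cup\oc(Q)$ (via template property~(2) applied to the elements below $x$), and then check that the inherited witness certifies the comparison in $\P_B$, invoking the inductive Completeness of Embeddings Lemma for the sub-template of strictly smaller rank. The paper's version is more terse (it works with a single witness $B'$ for the $\leq$-relation and closes under $\oc(P)\cup\oc(Q)$, rather than naming $B_P$, $B_Q$, $B_0$ separately), but the mechanism is identical; your extra care in explicitly carrying the membership witnesses $B_P,B_Q$ is, if anything, a small improvement in clarity, since in case~(ii.a) the comparison witness alone does not automatically certify that $q(x_q)$ is a $\P_B$-name.
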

\begin{proof} If $x_p$ is defined and $x_p=x_q$ (resp. $x_p<x_q$) let $B^\prime\in\mathcal{I}_{x_p}$ (resp. $B^\prime
\in\mathcal{I}_{x_q}$) be a witness to $Q\leq_P P$. Using definition~\ref{d.template}.(2) find $B\in\mathcal{I}_x$ such that $B^\prime\subseteq B$ and
$\oc(P)\cup\oc(Q)\subseteq B$. Then $B^\prime\in\mathcal{I}_{B,x_p}$ (resp. $B^\prime\in\mathcal{I}_{B,x_q}$) is a witness to $Q\leq_{\P_B} P$. If $x_p$ is not defined and $B\in\mathcal{I}_x$ is such that $\oc(P)\cup\oc(Q)\subseteq B$, then since $Q\restrict L_0\leq_{\Q_B} P\restrict L_0$ we obtain $Q\leq_{\P_B} P$.
\end{proof}

\begin{lemma}\label{l.cut} Let $P=(p,F^p)$ and $Q=(q,F^q)$ be conditions in $\P$ and let $x_0\in L$. Then $Q\srestrict L_{x_0}\in\P$, $Q\srestrict L_{x_0}^=\in\P$ and if $Q\leq_\P P$ then $Q\srestrict L_{x_0}\leq_\P P\srestrict L_{x_0}$ and $Q\srestrict L_{x_0}^=\leq_{\P} P\srestrict L_{x_0}^{=}$.
\end{lemma}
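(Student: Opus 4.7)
The plan is to argue by induction on $\Rk(\mathcal{T})$. The base case $\Rk(\mathcal{T})=0$ gives $\P = \Q_{L_0}$, and both claims follow at once from the Restrictions clause of Definition~\ref{finite_function_poset}. For the inductive step I assume the lemma holds for all templates of rank strictly less than $\Rk(\mathcal{T})$.

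To see $Q\srestrict L_{x_0}\in\P$, let $x^\ast := \max(\dom(q) \cap L_1 \cap L_{x_0})$ if defined. If $x^\ast$ is undefined, $Q\srestrict L_{x_0}$ has no $L_1$-point, so verifying it is a condition reduces to showing $(q\restrict (L_0 \cap L_{x_0}), F^q \cap \What{L_{x_0}}) \in \Q$, which follows from Restrictions applied to $(q\restrict L_0, F^q) \in \Q$. If $x^\ast = x_q$ (equivalently $x_q < x_0$), the original witness $B \in \mathcal{I}_{x_q}$ of $Q \in \P$ already serves. Otherwise $x^\ast < x_q$, so $x_q \geq x_0$ and $x^\ast \in \dom(q)\cap L_1 \subseteq B$ (the witness of $Q \in \P$ contains $\dom(q) \cap L_{x_q}$ by the definition of a $\P_B$-condition). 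I then invoke the inductive hypothesis for the template $\mathcal{T}_B$ of rank $\Dp(B) < \Rk(\mathcal{T})$ applied to $Q\srestrict L_{x_q} \in \P_B$ at the point $x_0$, yielding $Q\srestrict L_{x_0}=(Q\srestrict L_{x_q})\srestrict L_{x_0}\in \P_B$ together with a witness $B' \in (\mathcal{I}\restrict B)_{x^\ast}$; since $\mathcal{I}$ is closed under finite intersections, $B' \in \mathcal{I}_{x^\ast}$, and unfolding the recursive definition one sees that $q(x^\ast)$ is a $\P_{B'}$-name, so $B'$ witnesses $Q\srestrict L_{x_0} \in \P$. The argument for $Q\srestrict L_{x_0}^=$ is identical except when $x_0 \in \dom(q) \cap L_1$, in which case $x_0$ itself is the new top $L_1$-point and a witness in $\mathcal{I}_{x_0}$ is produced by the same reasoning, using the original $B$ if $x_q = x_0$ and the inductive hypothesis on $\P_B$ if $x_q > x_0$.

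For the ordering part, suppose $Q \leq_\P P$ with witness $B$ as in clause (ii.a) or (ii.b) of Definition~\ref{template_poset}. The $\Q$-inequality descends to the strong restrictions by the Restrictions clause of $\Q$. For the template component, case analysis on how $x_0$ compares with $x_p$ and $x_q$ either keeps the top $L_1$-points of the restrictions equal to $x_p$ and $x_q$ (in which case the same witness $B$ certifies the ordering of the restricted conditions, with the same $\BbS$-inequality on $p(x_q),q(x_q)$ in case (ii.b)), or forces a drop strictly below $x_q$. In the latter case, $P\srestrict L_{x_0}$ and $Q\srestrict L_{x_0}$ are both conditions of $\P_B$ by the first part, and applying the inductive hypothesis inside $\P_B$ to $Q\srestrict L_{x_q} \leq_{\P_B} P\srestrict L_{x_q}$ (together, in case (ii.b), with the persistence of the $\BbS$-inequality forced at the top) yields $Q\srestrict L_{x_0} \leq_{\P_B} P\srestrict L_{x_0}$. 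Since $\leq_{\P_B}\, \subseteq\, \leq_\P$, the desired inequality follows, and the $L_{x_0}^=$ variant is entirely parallel. The main subtlety is the bookkeeping in the case analysis comparing $x_0$ with $x_p$ and $x_q$, and the consistent use of closure of $\mathcal{I}$ under finite intersections to promote witnesses obtained in $\mathcal{I}\restrict B$ to elements of $\mathcal{I}_{x^\ast}$.
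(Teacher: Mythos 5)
Your proof is correct, but it runs on a different induction parameter than the paper's. The paper proves both assertions by induction on $n_q=|\dom(q)\cap L_1|$, staying inside the fixed poset $\P$ the whole time: when $x_0\leq x_q$ it passes to $Q\srestrict L_{x_q}$, which is already known to be a condition (of $\P_B\subseteq\P$ for the witness $B$) with strictly fewer $L_1$-points, and applies the inductive hypothesis to it in $\P$; the ordering part is handled the same way, using $\leq_{\P_B}\subseteq\leq_{\P}$ exactly as you do. You instead induct on $\Rk(\mathcal T)$ and descend into the subtemplate $\mathcal T_B$, invoking the lemma for $\P_B$. That is legitimate — $\Dp(B)<\Rk(\mathcal T)$ since $x_q\in L_1\setminus B$, and it fits the standing hypothesis under which Lemmas~\ref{domain}--\ref{l.paste} are proved — but it buys nothing and costs a little extra bookkeeping that you gloss over: the cut point $x_0$ need not belong to $B$, so ``the lemma for $\mathcal T_B$ applied at $x_0$'' is not literally an instance of the statement (whose cut points range over the underlying order of $\mathcal T_B$), and one must either observe that the proof works for arbitrary initial segments or replace $x_0$ by a suitable point of $B$; relatedly, the side condition $F^q\cap\What{L_{x_0}}$ of $Q\srestrict L_{x_0}$ computed in $\mathcal T$ need not coincide with the one obtained by cutting inside $\mathcal T_B$, since $L_{x_0}\cap L_0$ need not be contained in $B$. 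These are repairable (and the paper is itself informal at exactly this level of detail), but the paper's choice of inducting on the number of $L_1$-coordinates avoids the template-change entirely and is the cleaner route. The rest of your argument — the treatment of the case $x_q<x_0$ via the original witness, the use of Definition~\ref{finite_function_poset} when no $L_1$-points survive the cut, and the promotion of witnesses from $\mathcal I\restrict B$ to $\mathcal I$ via closure under intersections — matches the paper's reasoning.
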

\begin{proof}
The proofs of $Q\srestrict L_{x_0}\in\P$ and $Q\srestrict L_{x_0}\leq P\srestrict L_{x_0}$ proceed by induction on $n_q=|\dom(q)\cap L_1|$. The case $n_q=0$ follows by definition~\ref{finite_function_poset}. Thus suppose each of those is true whenever $n_q<n$ and let $n_q=n$.
To see that $Q\srestrict L_{x_0}\in\P$ note that if $x_q < x_0$ and $B$ is a witness to $Q\in\P$, then $B$ also witnesses $Q\srestrict L_{x_0}\in\P$. If $x_0\leq x_q$, then $n_{q\rest L_{x_0}} < n$ and so we can use the inductive hypothesis.

If $\dom(p\restrict L_{x_0})\subseteq L_0$, then $Q\srestrict L_{x_0}\leq_\P P\srestrict L_{x_0}$ follows from definition~\ref{finite_function_poset}. Suppose $n_{p\restrict L_{x_0}}\neq 0$  and let $B$ be a witness to $Q\leq P$. If $x_q<x_0$, then $B$ also witnesses $Q\srestrict L_{x_0}\leq P\srestrict L_{x_0}$. If $x_0<x_q$, then $Q\srestrict L_{x_q}\leq_{\P_B} P\restrict L_{x_q}$ and since $\leq_{\P_B}\subseteq\leq_{\P}$ we have that $Q\srestrict L_{x_q}\leq_\P P\srestrict L_{x_q}$. If $x_0=x_q$ we are done and if $x_0< x_q$ then $n_{q\rest L_{x_q}} <n$ and so by the inductive hypothesis, $Q\srestrict L_{x_0}\leq_\P P\srestrict L_{x_0}$.
\end{proof}

\begin{lemma}\label{ClaimX2} Let $P=(p,F^p)$ and $Q=(q,F^q)$ be conditions in $\P$. If $\dom(p)\subseteq\dom(q)$, $Q\restrict L_0\leq_\Q P\restrict L_0$ and $Q\srestrict {L_{x_p}^=}\leq_\P P\srestrict {L_{x_p}^=}$, then $Q\leq_\P P$.
\end{lemma}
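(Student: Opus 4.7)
The plan is to case-split on whether $x_p$ is defined. If $x_p$ is undefined then $\dom(p)\subseteq L_0$, so $Q\leq_\P P$ follows immediately from the two hypotheses and Definition~\ref{template_poset}. So assume $x_p$ is defined, and set $p'=p\rest L_{x_p}^=$, $q'=q\rest L_{x_p}^=$. Since $x_p\in\dom(p)\subseteq\dom(q)$ and $\dom(q')\cap L_1\subseteq L_{x_p}^=$, both $x_{p'}$ and $x_{q'}$ equal $x_p$. Clause (ii.a) of $\leq_\P$ therefore cannot apply to $Q\srestrict L_{x_p}^=\leq_\P P\srestrict L_{x_p}^=$, so clause (ii.b) does, and it yields a witness $B\in\mathcal{I}_{x_p}$ with
$$(Q\srestrict L_{x_p},q(x_p))\leq_{\P_B*\dot{\BbS}}(P\srestrict L_{x_p},p(x_p)).$$

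If $x_q=x_p$, then $Q\srestrict L_{x_q}=Q\srestrict L_{x_p}$ and $q(x_q)=q(x_p)$, so the same $B$ witnesses clause (ii.b) for $Q\leq_\P P$ and we are done. Suppose instead $x_q>x_p$; then I would exhibit clause (ii.a). Let $B_Q\in\mathcal{I}_{x_q}$ witness $Q\in\P$ and let $B''=B\cup B_Q$, enlarged inside $\mathcal{I}_{x_q}$ (using Definition~\ref{d.template}(2) and closure under finite unions) to also contain the finitely many letters of $F^p\cap\What{L_{x_q}}$. Then $Q\srestrict L_{x_q}\in\P_{B_Q}\subseteq\P_{B''}$, and $P\srestrict L_{x_q}\in\P_{B''}$ follows by using $B\in\mathcal{I}_{B'',x_p}$ as the witness for its $L_{x_p}^=$-part, together with the inclusions $\dom(p)\cap L_{x_q}\subseteq\dom(q)\cap L_{x_q}\subseteq B_Q\subseteq B''$.

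To finish, I have to show $Q\srestrict L_{x_q}\leq_{\P_{B''}}P\srestrict L_{x_q}$, and my proposal is to apply Lemma~\ref{ClaimX2} itself inside $\P_{B''}$ to $P^*=P\srestrict L_{x_q}$ and $Q^*=Q\srestrict L_{x_q}$. This appeal is legitimate because $\Rk(\mathcal{T}_{B''})=\Dp(B'')<\Dp(L)=\Rk(\mathcal{T})$ (as $x_q\in L_1\setminus B''$), and the overarching induction on rank that drives Lemma~\ref{l.mainlemma} makes Lemmas~\ref{domain}--\ref{l.paste} available at smaller rank. The three hypotheses to verify inside $\P_{B''}$ are: $\dom(p^*)\subseteq\dom(q^*)$ (inherited); the $L_0$-inequality, obtained from $Q\restrict L_0\leq_\Q P\restrict L_0$ via the Restrictions clause of Definition~\ref{finite_function_poset} combined with $\Q_{B''\cap L_0}\lessdot\Q$; and $Q^*\srestrict L_{x_p}^=\leq_{\P_{B''}}P^*\srestrict L_{x_p}^=$, which since $L_{x_p}^=\subseteq L_{x_q}$ reduces to $Q\srestrict L_{x_p}^=\leq_{\P_{B''}}P\srestrict L_{x_p}^=$ and is witnessed by the very same $B$ viewed as an element of $\mathcal{I}_{B'',x_p}$.

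The main obstacle will be the bookkeeping: making sure the single witness $B$ transfers from $\P$ to $\P_{B''}$ in a way that is compatible with clause (ii.b), verifying that $P\srestrict L_{x_q}$ and $Q\srestrict L_{x_q}$ genuinely live in $\P_{B''}$ (which requires that their $L_0$-letters, including those appearing in $\oc(F^p)$ and $\oc(F^q)$ inside $L_{x_q}$, lie in $B''\cap L_0$), and that the various side-condition intersections $F\cap\What{L_{x_q}}$ behave correctly when passing through the sub-posets $\Q_{B''\cap L_0}\lessdot\Q$. Once $B''=B\cup B_Q$ has been chosen (and suitably enlarged), these are routine but finicky checks that make up essentially the entire content of the argument.
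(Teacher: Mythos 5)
Your proposal is correct and follows essentially the same route as the paper: the paper handles $x_q>x_p$ by iterating bottom-up through the points $x_1<\dots<x_n$ of $\dom(q)\cap L_1$ above $x_p$, building an increasing chain of witnesses $H_0\subseteq\dots\subseteq H_n$ with $H_{j-1}$ witnessing $Q\srestrict L_{x_j}\leq_{\P_{H_j}}P\srestrict L_{x_j}$, which is exactly your recursion unwound (your $B''$ playing the role of $H_n$ and the recursive call producing $H_{n-1}$, etc.). The witness-transfer and enlargement bookkeeping you flag is the same as what the paper's construction implicitly requires.
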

\begin{proof}
Note that $x_q\geq x_p$. If $x_q=x_p$, then if $B$ is a witness to $Q\srestrict L_{x_p}^=\leq_\P P\srestrict L_{x_p}^=$, then $B$ is also a witness to $Q\leq P$. Thus suppose $x_q > x_p=x$. Let $(\dom(q)\cap L_1)\backslash L_{x_p}^==\{x_j\}_{j=1^,<}^n$ and let $H\in\mathcal{I}_{x_p}$ be a witness to $Q\srestrict L_{x_p}^=\leq_\P P\srestrict L_{x_p}^=$. In finitely many steps we can find an increasing sequence $\{H_j\}_{j=1}^n$ of elements of $\mathcal{I}$ such that forall $j$, $H_j\subseteq L_{x_j}$, $\oc(Q\srestrict L_{x_j})$, $\oc(P\srestrict L_{x_j})$ are contained in $H_j$ and $H_{j-1}$ is a witness to $Q\srestrict L_{x_j}\leq_{\P_{H_j}} P\srestrict L_{x_j}$ (taking $H_0=H$). Then $H_{n-1}$ is a witness to $Q\srestrict L_{x_n}\leq_{\P_{H_n}} P\srestrict L_{x_n}$, i.e. $H_n\in \mathcal{I}\cap\PO(L_{x_n})$ is a witness to $Q\leq_\P P$.
\end{proof}

\begin{lemma}\label{l.paste} Let $Q=(q,F^q)=Q\srestrict L_x$ be such that $x=\max\{\dom(q)\cap L_1\}$ be a condition in $\P$ with witness
$\bar{D}$. Let $P=(p,F^p)$ be a condition such that $(Q_0\srestrict L_x)\restrict L_0$ is a strong $\Q_{L_0\cap L_x}$-reduction of
$Q_0\restrict L_0$ and such that $Q\leq_\P Q_0\srestrict L_x^=$ with witness $\bar{D}$. Then $Q{\bar{\ltimes}}_x P=(q\bar{\ltimes}_x p, F^q\cup F^p)$
is a common extension of $Q$ and $P$ where $q\bar{\ltimes}_x p = q\cup p\restrict L\backslash L_x^=$.
\end{lemma}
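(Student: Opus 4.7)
The approach is to verify directly that $R = Q\bar\ltimes_x P = (r, F^q\cup F^p)$, where $r = q\cup p\restrict(L\setminus L_x^=)$, defines a condition in $\P$ that simultaneously extends $Q$ and $P$. The verification splits along the two sides of the template: the $L_0$-coordinates are glued using the strong embedding property of $\Q$, and the $L_1$-coordinates are handled by choosing an appropriate witness in $\mathcal I$.

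For the $L_0$-side I would first note that, since $x\in L_1$, one has $L_x^=\cap L_0=L_x\cap L_0$, whence $r\restrict L_0=q\restrict L_0\cup p\restrict(L_0\setminus L_x)$. The assumption $Q\leq_\P P\srestrict L_x^=$ gives the $\Q$-inequality $(q\restrict L_0,F^q)\leq_\Q (p\restrict(L_0\cap L_x),F^p\cap\What{L_0\cap L_x})$, and every letter occurring in $(q\restrict L_0,F^q)$ lies in $L_0\cap L_x$. Combined with the hypothesis that $(P\srestrict L_x)\restrict L_0$ is a strong $\Q_{L_0\cap L_x}$-reduction of $P\restrict L_0$, Definition~\ref{d.strong_reduction} together with Remark~\ref{d.ffe} yields that $(r\restrict L_0,F^q\cup F^p)$ is a condition of $\Q$ that $\Q$-extends both $(q\restrict L_0,F^q)$ and $(p\restrict L_0,F^p)$.

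For the $L_1$-side I would select a witness $B^\ast\in\mathcal{I}_{x_r}$ for $R\in\P$, where $x_r=\max\dom(r)\cap L_1$. If $\dom(p)\cap L_1\subseteq L_x^=$, then $x_r=x$ and Definition~\ref{d.template}(2) lets me enlarge $\bar D$ into some $B^\ast\in\mathcal{I}_x$ absorbing every letter occurring in $R\srestrict L_x$; applying the inductive form of Lemma~\ref{l.mainlemma} to the template $\mathcal T_{B^\ast}$ of smaller rank certifies $R\srestrict L_x\in\P_{B^\ast}$. If instead $x_r=x_p>x$, I would enlarge the $\mathcal{I}_{x_p}$-witness of $P\in\P$ so that it also contains $\bar D$, using closure of $\mathcal I$ under finite unions and Definition~\ref{d.template}(3) to stay inside $L_{x_p}$; again the inductive hypothesis delivers $R\srestrict L_{x_p}\in\P_{B^\ast}$.

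Finally, $R\leq_\P Q$ is immediate from the chosen witness and clauses (ii.a) or (ii.b) of Definition~\ref{template_poset}. For $R\leq_\P P$, Lemma~\ref{l.cut} reduces the question to checking that $R\srestrict L_{x_p}^=\leq_\P P\srestrict L_{x_p}^=$, and Lemma~\ref{ClaimX2} reduces this further to verifying that $R\srestrict L_x^=\leq_\P P\srestrict L_x^=$, which follows from the first step together with the hypothesis $Q\leq_\P P\srestrict L_x^=$ (noting that the coordinates of $p$ strictly above $x$ already coincide with those of $r$). The main obstacle I anticipate is the bookkeeping in the second step: one has to pick $B^\ast$ carefully so that it simultaneously absorbs $\bar D$, the original witnesses for $P$ and $Q$, and the witnesses furnished by the inductive application of Lemma~\ref{l.mainlemma}, while remaining inside the correct initial segment $L_{x_r}$. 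Once this is done, transitivity of the inequalities handed off by the inductive Main Lemma closes the argument.
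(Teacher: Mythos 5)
Your gluing of the $L_0$-coordinates via the strong-reduction hypothesis matches the paper's argument and is fine, as is the base case where $\dom(p)\cap L_1\subseteq L_x^=$. The gap is on the $L_1$-side when $x_p>x$, and it is twofold. First, Lemma~\ref{ClaimX2} cannot be iterated downwards the way you claim: it reduces $R\leq_\P P$ to $R\srestrict L_{x_p}^=\leq_\P P\srestrict L_{x_p}^=$, where $x_p$ is the \emph{top} $L_1$-point of $\dom(p)$, and applying it again to that pair leaves you at the same coordinate $x_p$; it never descends past an $L_1$-coordinate of $p$, so you cannot reach the statement $R\srestrict L_x^=\leq_\P P\srestrict L_x^=$, which is the one you actually know from the hypothesis. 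Second, and more substantively, you never verify the clause that makes the intermediate coordinates hard: for each $y\in\dom(p)\cap L_1$ with $y>x$ one must produce a \emph{single} witness $B\in\mathcal I_y$ such that $R\srestrict L_y\in\P_B$ and $R\srestrict L_y\forces_{\P_B}p(y)\in\dot\BbS$. This is needed both for $R\in\P$ and for clause (ii.b) of $R\leq_\P P$ (even though $r(y)=p(y)$, the pair $(R\srestrict L_y,p(y))$ must be a $\P_B*\dot\BbS$-extension of $(P\srestrict L_y,p(y))$, which requires $R\srestrict L_y$ to force membership of $p(y)$ in $\dot\BbS$). The name $p(y)$ lives over $P$'s witness, while the inequality $R\srestrict L_y\leq P\srestrict L_y$ over a common witness --- which is what lets you transfer the forcing statement --- is exactly the assertion being proved. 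Choosing one big $B^\ast$ at the top and invoking ``the inductive hypothesis'' does not break this circularity, because Lemma~\ref{l.mainlemma} at smaller rank says nothing about the glued object being a condition.

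The paper resolves this by induction on $n_p=|\dom(p)\cap L_1\setminus L_x^=|$: apply the lemma to $P\srestrict L_{x_p}$ (one fewer coordinate) to get that $Q\bar{\ltimes}_x(P\srestrict L_{x_p})$ is a common extension of $Q$ and $P\srestrict L_{x_p}$; use Lemma~\ref{domain} to find $B_0\in\mathcal I_{x_p}$ with $Q\bar{\ltimes}_x(P\srestrict L_{x_p})\leq_{\P_{B_0}}Q,\,P\srestrict L_{x_p}$; set $B=B_0\cup B_1$ where $B_1$ witnesses $P\in\P$; and use $\P_{B_0},\P_{B_1}\lessdot\P_B$ (available from the Main Lemma at smaller rank) to conclude $Q\bar{\ltimes}_x(P\srestrict L_{x_p})\forces_{\P_B}p(x_p)\in\dot\BbS$. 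Then $B$ witnesses $R\leq_\P P$ and $B_0$ witnesses $R\leq_\P Q$. You need to add this induction, or an equivalent coordinate-by-coordinate ascent, for the proof to close.
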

\begin{proof} Since $q\bar{\ltimes}_x p\restrict L_0= q\restrict L_0\cup p\restrict L_0\backslash L_x^=$ and $(P\srestrict L_x)\restrict L_0$ is a strong
$\Q_{L_0\cap L_x}$-reduction of $P\restrict L_0$, we have that $(Q{\bar{\ltimes}}_x P)\restrict L_0\leq P\restrict L_0$. On the other hand $\dom(p\restrict L_0\backslash L_x^=)\cap \oc(Q)=\emptyset$ and so $(Q{\bar{\ltimes}}_x P)\restrict L_0\leq Q\restrict L_0$.

Suppose $n_p:=|\dom(p)\cap L_1\setminus L_{x_0}|$ is $0$. Then $\bar{D}$ witnesses that $Q{\bar{\ltimes}}_x P$ extends each of $P$ and $Q$. Now suppose
that the claim is true whenever $0\leq n_p <n$ and let $P$ be a condition with $n_p=n$. Then $x_p>x$ and $Q\leq (P\srestrict L_{x_p})\srestrict L_x^=$. By the inductive hypothesis $Q{\bar{\ltimes}}_x (P\srestrict L_{x_p})$ is a condition in $\P$ extending both $Q$ and $P\srestrict L_{x_p}$. By Lemma~\ref{domain} there is $B_0\in\mathcal{I}_{x_p}$ such that $Q\bar{\ltimes}_x(P\srestrict L_{x_p})\leq_{\P_{B_0}} Q,P\srestrict L_{x_p}$. Let
$B_1$ be a witness to $P\in\P$. Thus $P\srestrict L_{x_p}\in\P_{B_1}$ and $P\srestrict L_{x_p}\Vdash_{\P_{B_1}} p(x_p)\in\dot{\BbS}$. Then $B=B_0\cup B_1\in\mathcal{I}_{x_p}$ and $\P_{B_0}$, $\P_{B_1}$ completely embed into $\P_B$. This implies that $Q\bar{\ltimes}_x{(P\srestrict L_{x_p})}\leq_{\P_B} Q, P\srestrict L_{x_p}$ and so in particular $Q\bar{\ltimes}_x (P\srestrict L_{x_p})\Vdash_{\P_B} p(x_p)\in\dot{\BbS}$. Then $B$ is also a witness to $Q\bar{\ltimes}_x P \leq_\P P$. Since $x_q< x_{q\bar{\ltimes}_x p}=x_p$, the set $B_0$ is a witness to $Q\bar{\ltimes}_x P \leq_\P Q$
\end{proof}

\begin{proof}[Proof of Lemma \ref{l.mainlemma}]
We establish the Lemma by recursion on the rank of the underlying template.  The $\Rk(\mathcal T_B)=0$ case is clear. So assume that the Lemma holds for all templates of rank $<\alpha$, and let $\Rk(\mathcal{T}_B)=\alpha$. Let $\P=\P_B$.

\bigskip
\noindent
{\bf{Transitivity:}} To see that $\leq_{\P}$ is transitive, fix $P_0,P_1,P_2\in\P$ such that $P_1\leq_{\P} P_0$ and $P_2\leq_{\P} P_1$, and assume that $x_{p_0}$ is defined (since otherwise there is nothing to show.) Fix witnesses $B_1\in \mathcal I_{x_{p_1}}$ and $B_2\in\mathcal I_{x_{p_2}}$ to $P_1\leq_{\P} P_0$ and $P_2\leq_{\P} P_1$. Since $\Dp(B_1\cup B_2)<\alpha$, the inductive hypothesis gives that $\P_{B_1},\P_{B_2}\lessdot \P_{B_1\cup B_2}$, and so we have $P_i\restrict L_{x_{p_2}}=P_i\restrict B_1\cup B_2\in\P_{B_1\cup B_2}$ for $0\leq i\leq 2$, and that
$$
P_2\srestrict L_{x_{p_2}}\leq_{\P_{B_1\cup B_2}} P_1\srestrict L_{x_{p_2}}\leq_{\P_{B_1\cup B_2}} P_0\srestrict L_{x_{p_2}}.
$$
Thus by the inductive hypothesis we have $P_2\srestrict L_{x_{p_2}}\leq_{\P_{B_1\cup B_2}} P_0\srestrict L_{x_{p_2}}$. If $x_{p_0}<x_{p_2}$ then it now follows from the definition of $\leq_{\P}$ that $P_2\leq_{\P} P_0$. So assume that $x_{p_0}=x_{p_2}$. It is clear that $p_i(x_{p_2})$ is a $\P_{B_1\cup B_2}$-name for $0\leq i\leq 2$. Since $\P_{B_1},\P_{B_2}\lessdot \P_{B_1\cup B_2}$ we must have that $P_1\srestrict L_{x_{p_2}}\forces_{\P_{B_1\cup B_2}} p_1(x_{p_2})\leq_{\dot\BbS} p_0(x_{p_2})$ and $P_2\srestrict L_{x_{p_2}}\forces_{\P_{B_1\cup B_2}} p_2(x_{p_2})\leq_{\dot\BbS} p_1(x_{p_2})$. But then $P_2\restrict L_{x_{p_2}}\forces_{\P_{B_1\cup B_2}} p_1(x_{p_2})\leq_{\dot\BbS} p_0(x_{p_2})$ and so $P_2\srestrict L_{x_{p_2}}\forces_{\P_{B_1\cup B_2}} p_2(x_{p_2})\leq_{\dot\BbS} p_0(x_{p_2})$. Thus
$$(P_2\srestrict L_{x_{p_2}}, p_2(x_{p_2}))\leq_{\P_{B_1\cup B_2}*\dot\BbS} (P_0\srestrict L_{x_{p_2}}, p_0(x_{p_2}))$$
as required.

\bigskip
\noindent
{\bf{Suborders:}} Let $A\subset B$ be closed, $B\in\mathcal{I}$ be given. We will show that $\P_A\subset \P_B$. Assume $R=(r,F^r)\in\P_A$. Let $x=x_r$. By definition of the iteration there is $\bar{A}\in{(\mathcal{I}\restrict A)}_x$ such that $R\srestrict{(A\cap L_x)}\in\P_{\bar{A}}$ and $\dot{f}_x^r$ is a $\P_{\bar{A}}$-name.

Note that $\bar{A}\in \mathcal{I}\restrict A$ means that there is $B_0\in\mathcal{I}$ such that $\bar{A}=B_0\cap A$. On the other hand $A\subset B$, so $\bar{A}\subset B$ and so $B_0\cap A=B_0\cap B\cap A$. But $\mathcal{I}$ is closed under finite intersections and so $B_0\cap B\in\mathcal{I}$, even $B_0\cap B\in\mathcal{I}_B$. So without loss of generality there is $\bar{B}\in\mathcal{I}_ B$ (just take $\bar{B}=B_0\cap B$) such that $\bar{A}=A\cap \bar{B}$. Since $\bar{A}\subseteq L_x$, $x\notin \bar{B}$. Then by definition~\ref{d.template}.(3), $\bar{B}\cap L_x\in\mathcal{I}_B$. Therefore we can assume that $\bar{B}\subseteq L_x$. Thus $\bar{B}\subset B$ and $\hbox{Dp}(\bar{B})<\hbox{Dp}(B)=\alpha$. By the inductive hypothesis, $\P_{\bar{A}}\subseteq\P_{\bar{B}}$ and $\P_{\bar{A}}\lessdot \P_{\bar{B}}$. Therefore $\dot{f}^r_x$ is a $\P_{\bar{B}}$-name as well. Thus $R\srestrict L_x\in\P_{\bar{B}}$ and $\dot{f}^r_x$ is a $\P_{\bar{B}}$-name. That is, $R\in\P_B$.

\bigskip
\noindent
{\bf{Complete Embeddings:}} Assume $P=(p,F^p)\in \P_B$. We will construct a "canonical reduction" $P_0=p_0(P,A,B)$. Let $x=x_p$. By definition of the iteration, there is $\bar{B}\in\mathcal{I}_{B,x}$ such that $P\srestrict L_x=\bar{P}\in\P_{\bar{B}}$ and $\dot{f}^p_x$ is a $\P_{\bar{B}}$-name. Let $\bar{A}=A\cap \bar{B}$. Then $\bar{A}\in\mathcal{I}\restrict A$, $\bar{A}\subset \bar{B}$, $\bar{A}\in\PO(L_x)$.
Repeating the argument from $(2)$, we obtain  $\P_{\bar{A}}\lessdot \P_{\bar{B}}$. Therefore $\bar{P}$ has a "canonical reduction" $\bar{P}_0=p_0(\bar{P},\bar{A},\bar{B})$. Let $F^{p_0}= F^p\cap\What{A}$. Define $p_0\restrict L_0$ so that $(p_0\restrict L_0, F^{p_0})$
is a strong $\Q_A$-reduction of $(p_0\restrict L_0, F^p)$ and $p_0\restrict L_0\cap\bar{A}\supseteq \bar{p}_0\restrict L_0$. Let $p_0\restrict L_1\cap L_x=
\bar{p}_0\restrict L_1\cap L_x$. Then $P_0\srestrict L_x\leq_{\P_{\bar{A}}}\bar{P}_0$ and so $P_0\srestrict L_x$ is a canonical reduction of $\bar{P}$.
We can assume that $p(x)$ is a nice name for a condition in $\BbS$ below $\bar{P}$. If $x\notin A$, let $p_0(x)=p(x)$ and if $x\in A$ let $p_0(x)$ be a canonical projection of $p(x)$ below $P_0\srestrict L_x$.

\bigskip
\noindent
Now assume $D\in\mathcal{I}$, $C\subseteq D$ closed are such that $B\cup C\subseteq D$, $A=B\cap C$ and $\Dp(D)=\alpha$.
Let $Q_0=(q_0, F^{q_0})\leq_{\P_C} P_0$. We will construct a common extension of $Q_0$ and $P$.

{\bf{Case 1:}} $x\notin A$. Then clearly $x\notin C$. Let $y=\max(\dom(q_0)\cap L_x\cap L_1)$. Then $y<x$. By Lemma~\ref{l.cut} $Q_0\restrict L_y^=\leq_{\P_C} P_0\restrict L_y^=$ and so there is $\bar{E}\in(\mathcal{I}\restrict C)_y$ witnessing this fact. Using~\ref{d.template}.(2) find $\bar{F}\in\mathcal{I}_{D,y}$ such that
$\bar{E}=\bar{F}\cap C$. By~\ref{d.template}.(3) there is $\bar{G}\in\mathcal{I}_{D,x}$ such that
$\dom(q_0)\cap L_x\backslash L_y\subseteq\bar{G}$. Let $\bar{D}=\bar{B}\cup \bar{F}\cup\bar{G}$, $\bar{C}=(\bar{G}\cap C)\cup \bar{E}\cup \bar{A}$ and note that
$\bar{D}\in\mathcal{I}_{D,x}$, $\bar{C}\in(\mathcal{I}\restrict C)_x$. Clearly $\bar{C}\subseteq\bar{D}$, $\bar{C}\cap\bar{B}=\bar{A}$.

Note that $\bar{Q}_0:=Q_0\srestrict L_x\leq_{\P_{\bar{C}}} P_0\srestrict L_x$ with witness $\bar{E}$ (observe that $\bar{E}$ also belongs to $(\mathcal{I}\restrict \bar{C})_y$).  Passing to an extension if necessary, we can assume that $\bar{Q}_0\restrict L_0$ is a strong $\Q_{\bar{C}}$-reduction of $Q_0\restrict L_0$. Since $\hbox{Dp}_{\mathcal{I}\rest C}(\bar{C})\leq\hbox{Dp}_{\mathcal{I}}(\bar{D})<\hbox{Dp}_\mathcal{I} (D)=\alpha$, we can apply the inductive hypothesis to $\bar{A}$, $\bar{B}$, $\bar{C}$, $\bar{D}$. Thus there is a common extension
$\bar{Q}=(\bar{q},F^{\bar{q}})\leq_{\P_{\bar{D}}}\bar{Q}_0, P\srestrict L_x$. With this we are ready to define a common extension $Q=(q,F^q)$ of $Q_0$ and $P$ as follows:

Let $q^\prime=\bar{q}\cup\{(x,p(x))\}$, $F^{q^\prime}=F^{\bar{q}}$ and let $Q^\prime=(q^\prime, F^{q^\prime})$.
Then $\bar{D}$ does not only witness $Q^\prime\in\P_D$, but also $Q^\prime\leq_{\P_D} \bar{Q}_0=Q_0$.
By Lemma~\ref{l.paste} $Q^{\prime\prime}:=Q^\prime{\bar{\ltimes}}_x Q_0$ is a common extension in $\P_D$ of $Q^\prime$ and $Q_0$. Denote $Q^{\prime\prime}=(q^{\prime\prime}, F^{q^\prime}\cup F^{q_0})$ and let  $\hat{p}=p\restrict L_0\backslash \dom(q^{\prime\prime})$.
Let $q=q^{\prime\prime}\cup\hat{p}$, $F^q=F^{q^\prime}\cup F^{q_0}$ and let $Q=(q,F^q)$.
Since $\oc(Q^{\prime\prime})\cap\dom(\hat{p})=\emptyset$, we obtain that $Q=(q^{\prime\prime}\cup\hat{p}, F^{q_0}\cup F^p)$ is a condition in $\P$, extending $Q^{\prime\prime}$. Thus in particular $Q\leq Q_0$.

To see that $Q\leq P$, first observe that $Q^{\prime\prime}\restrict L_0 \leq Q_0\restrict L_0\leq P_0\restrict L_0$ and since by definition $P_0\restrict L_0$ is a strong $\Q_A$-reduction of $P\restrict L_0$, we obtain $(q^{\prime\prime}\restrict L_0\cup\hat{p}\restrict L_0, F^p)\leq P\restrict L_0$. But then $Q\restrict L_0\leq P\restrict  L_0$, $Q\srestrict L_{x_p}^=\leq P\srestrict L_{x_p}^=$ and $\dom(p)\subseteq \dom(q)$, which by Lemma~\ref{ClaimX2} gives $Q\leq P$.

{\bf{Case 2: $x\in A$.}}
Then $x\in C$. Let $\bar{C}\in(\mathcal{I}\restrict C)_x$ be a witness to $Q_0\srestrict L_x^=\leq_{\P_C} P_0\srestrict L_x^=$. That is $\bar{Q}_0=Q_0\srestrict L_x\leq_{\P_{\bar{C}}} P_0\srestrict L_x(\leq\bar{P}_0)$ and $Q_0\srestrict L_x\Vdash " (\check{s}_x^{q_0},\dot{f}^{q_0}_x)\leq_{\dot{\BbS}}(s^{p_0}_x,\dot{f}^{p_0}_x)"$.
By definition $\bar{A}=A\cap\bar{B}$, where $\bar{B}\in\mathcal{I}_{B,x}$. Also by definition of $\bar{C}\in(\mathcal{I}\restrict C)$ there is $C_0^\prime\in\mathcal{I}$ such that $\bar{C}=C_0^\prime\cap C$. Then $x\notin C_0^\prime$ and so by~\ref{d.template}.(3) $C_0=C_0^\prime\cap L_x\in\mathcal{I}_x$ and $\bar{C}= C_0\cap C$. Passing to an extension if necessary, we can assume that $\bar{Q}_0\restrict L_0$ is a strong $\Q_{\bar{C}}$-reduction of $Q_0\restrict L_0$.

Then $\bar{A}\cup\bar{C}\in (\mathcal{I}\restrict C)_x$ and since
$\hbox{Rk}(\mathcal{T}_{\bar{A}\cup\bar{C}})<\hbox{Rk}(\mathcal T)$, we have $\P_{\bar{C}}\lessdot \P_{\bar{A}\cup\bar{C}}$. Therefore $\dot{f}^{q_0}_x$ is also a $\P_{\bar{A}\cup\bar{C}}$-name and  so without loss of generality, we may assume that $\bar{A}\subseteq\bar{C}$.
Observe that $\bar{A}=\bar{C}\cap \bar{B}$. Note also that $\bar{D}:= D\cap C_0\in\mathcal{I}_{D,x}$ and $\bar{C}=\bar{D}\cap C$. We may also assume that
$\bar{B}\subseteq\bar{D}$ (otherwise take $\bar{B}\cup\bar{D}\in\mathcal{I}_{D,x}$). Since $\hbox{Dp}_{\mathcal{I}}(\bar{D}) < \hbox{Dp}_{\mathcal I}(D)=\alpha$, we can use the inductive hypothesis when working with $\bar{A}$, $\bar{B}$, $\bar{C}$, $\bar{D}$.

Let $n$ be such that $\BbS$ is $n$-Suslin. Let $m=|s^{q_0}_x|$. Find $\hat{Q}_0\leq_{\P_{\bar{C}}} \bar{Q}_0$ and $s^\prime\in{^{n\cdot m}\omega}$  such that $\hat{Q}_0\Vdash"\dot{f}^{p_0}_x\rest n\cdot m = \check{s}^\prime".$ Let $G$ be $\P_{\bar{C}}$-generic filter such that $\hat{Q}_0\in G$. Now note that $\dot{f}^{p_0}_x$ is a $\P_{\bar{A}}$-name
and $\P_{\bar{A}}\lessdot\P_{\bar{C}}$ by the inductive hypothesis (here we use the fact that $\hbox{Dp}_{\mathcal{I}\rest{\bar{C}}}(\bar{C})\leq \hbox{Dp}_{\mathcal{I}}(\bar{D})<\alpha$). Therefore $G\cap\bar{A}$ is a $\P_{\bar{A}}$-generic and there is $U\in G\cap \bar{A}$ such that $U\Vdash_{\P_{\bar{A}}}\dot{f}^{p_0}_x\restrict n\cdot m=\check{s}^\prime$. Now $U, P_0\srestrict L_x\in G\cap \bar{A}$, so they have a common extension $E^\prime\in G\cap \bar{A}$ and $E^\prime\Vdash_{\P_{\bar{A}}} \dot{f}^{p_0}_x\restrict n\cdot m=\check{s}^\prime$. Since $E^\prime$, $\hat{Q}_0$ are in $G$ they have a common extension $\hat{\hat{Q}}_0\in G$ (and so in $\P_{\bar{B}}$). Then in particular $\hat{\hat{Q}}_0\leq E^\prime$ and so $\hat{\hat{Q}}_0$ has a reduction $\tilde{Q}_0$ in $\P_{\bar A}$ which extends $E^\prime$. Thus $\tilde{Q}_0\Vdash_{\P_{\bar{A}}} \dot{f}^{p_0}_x\restrict n\cdot m=\check{s}^\prime$ and $\tilde{Q}_0\leq P_0\srestrict L_x$. But then $\tilde{Q}_0$ is compatible in $\P_{\bar{A}}$ with some element  $a\in\mathcal{A}_{n\cdot m}$. Here following the notation of Lemma~\ref{l.projection_name_real}, we assume that $\dot{f}^p_x=\{(b,s(b))\}_{b\in\mathcal{B}_,n\geq 1}$ and $\dot{f}^{p_0}_x=\{(a,s(a))\}_{a\in\mathcal{A}_n, n\geq 1}$. Since $a\Vdash_{\P_{\bar{A}}} \dot{f}^{p_0}_x\restrict n\cdot m=\check{s(a)}$ and $a$ is compatible with $\tilde{Q}_0$, it must be the case that $s(a)=s^\prime$. Let
$P_0^*$ be a common $\P_{\bar{A}}$ extension of $a$ and $\tilde{Q}_0$. Then $P_0^*\leq a$ and $P_0^*$ is a reduction of $\hat{\hat{Q}}_0$ (since $\tilde{Q}_0$ is such a reduction; also $P_0^*$ is a reduction of $\bar{Q}_0$).  By construction $a$ is a reduction of some condition $b\in\mathcal{B}_{n\cdot m}$ such that $s(b)=s(a)$, i.e. $b\leq \bar{P}$ and $b\Vdash_{\P_{\bar{B}}} \dot{f}^p_x\restrict n\cdot m=\check{s}^\prime$. Then $P^*_0$ is compatible with $b$, with common extension $\bar{P}^+$. By the inductive hypothesis $\P_{\bar{A}}\lessdot\P_{\bar{B}}$ and so $\bar{P}^+$ has a canonical reduction $\hat{P}^+=p_0(\bar{P}^+,\bar{A},\bar{B})$. By Lemma~\ref{l.strongsuslin}, $\hat{P}^+$ is compatible with $P_0^*$ (since $\bar{P}^+\leq P_0^*$ and every canonical reduction is clearly also a reduction). Therefore they have a common extension $\bar{P}^+_0$. Note that $\bar{P}_0^+\leq P_0^*$ and $\bar{P}_0^+$ is a canonical reduction of $\bar{P}^+$. Since $P_0^*$ is a reduction of $\hat{\hat{Q}}_0$ onto $\P_{\bar{A}}$, there is $\bar{Q}_0^+\in\P_{\bar{C}}$ extending $\bar{P}_0^+$ and $\hat{\hat{Q}}_0$. Now using the fact that $\bar{Q}_0^+\leq\bar{P}_0^+$ and $\bar{P}_0^+$ being a canonical reduction of $\bar{P}^+$, we obtain a condition $T=(t,F^t)\in\P_{\bar{D}}$ such that $T\leq_{\P_{\bar{D}}} \bar{P}^+$ and $T\leq_{\P_{{\bar{D}}}} \bar{Q}_0^+$.

Then
$$T\Vdash_{\P_{\bar{D}}} "(s^{q_0}_x,\dot{f}^{q_0}_x)\leq_{\dot{\BbS}} (s^{p_0}_x,\dot{f}^{p_0}_x)\wedge (s^p_x,\dot{f}^p_x)\,\hbox{is such that}\; s^p_x=s^{p_0}_x\wedge\dot{f}^p_x\restrict n\cdot m=\dot{f}^{p_0}_x\restrict n\cdot m".$$
Since $\BbS$ is by assumption $n$-Suslin we have
$T\Vdash_{\P_{\bar{D}}} \exists t(x)\in \dot{\BbS}(t(x)\leq_{\dot{\BbS}} q_0(x), p(x))$.
Find $\bar{Q}^+\leq T$ and a nice name $(\check{s}^q_x,\dot{f}^q_x)$ for a condition in $\BbS$ below $\bar{Q}^+$ such that
$\bar{Q}^+\Vdash_{\P_{\bar{D}}} "(\check{s}^q_x,\dot{f}^q_x)\leq_{\dot{\BbS}} (s^{q_0}_x,\dot{f}^{q_0}_x), (s^p_x,\dot{f}^p_x)"$.
Denote $\bar{Q}^+=(\bar{q}^+, F^{{\bar{q}}^+})$.

With this we are ready to define a common extension $Q=(q,F^q)$ of $Q_0$ and $P$. Let $q^\prime=\bar{q}^+\cup\{(x,q(x))\}$, $F^{q^\prime}=F^{\bar{q}}$ and  $Q^\prime=(q^\prime, F^{q^\prime})$.  Given $Q^\prime$, define $Q^{\prime\prime}$, $\hat{p}$ and $Q$ as in Case 1. Then following the proof of Case 1,
one obtains that $Q$ is a common extension of $Q_0$ and $P$.
\end{proof}

\subsection{Basic properties of the iteration}

Having established our generalized ``Main Lemma'', we now proceed to develop the remaining basic tools that we need to work with the iteration along a two-sided template. These steps are parallel to those taken in Brendle \cite[pp. 2640--2642]{brendle03}, and we provide complete proofs only where it seems warranted.
For the discussion in this section fix $\mathcal T$, $\Q$ and $\BbS$ as in Lemma \ref{l.mainlemma}.

\begin{lemma}\label{l.template_knaster}
Suppose $\Q$ is Knaster. Then $\P(\mathcal T,\Q,\BbS)$ is Knaster.
\end{lemma}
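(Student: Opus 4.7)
The plan is to argue by induction on $\Rk(\mathcal T)$. The base case, $\Rk(\mathcal T)=0$, is immediate since then $\P(\mathcal T,\Q,\BbS)=\Q_{L_0}$, which is Knaster by hypothesis. For the inductive step, suppose $\P(\mathcal T',\Q,\BbS)$ is Knaster for every template $\mathcal T'$ of rank strictly less than $\Rk(\mathcal T)$, and let $\{P_\alpha=(p_\alpha,F^{p_\alpha}):\alpha<\omega_1\}$ be an uncountable sequence of conditions in $\P=\P(\mathcal T,\Q,\BbS)$. For each $\alpha$, I would first choose $B_\alpha\in\mathcal I$ such that $\oc(P_\alpha)\cup\oc(F^{p_\alpha})\subseteq B_\alpha$ and, if $x_{p_\alpha}$ is defined, $B_\alpha\in\mathcal I_{x_{p_\alpha}}$ witnesses $P_\alpha\in\P$; such $B_\alpha$ exists by Definition~\ref{d.template}(2) together with the closure of $\mathcal I$ under finite unions. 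Since $B_\alpha\cap L_1\subsetneq L_1$ (it lies below some $x_{p_\alpha}\in L_1$), we have $\Dp(B_\alpha)<\Rk(\mathcal T)$.

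Next I would apply the $\Delta$-system lemma and pigeonhole to refine to an uncountable $I\subseteq\omega_1$ such that: (a) $|\dom(p_\alpha)|$ is a constant $n$, and $\{\dom(p_\alpha):\alpha\in I\}$ forms a $\Delta$-system with root $R$; (b) $\{F^{p_\alpha}:\alpha\in I\}$ forms a $\Delta$-system with a common root $E^*$; (c) for every $x\in R\cap L_0$ the value $p_\alpha(x)$ is independent of $\alpha$, and for every $x\in R\cap L_1$ the stems $s^{p_\alpha}_x\in\omega^{<\omega}$ are independent of $\alpha$ (using countability of $\omega^{<\omega}$); (d) applying the Knaster property of $\Q$ to the sequence $\{P_\alpha\restrict L_0:\alpha\in I\}$, we may assume these conditions are pairwise compatible in $\Q$; moreover, by the strong embedding property of $\Q$, we may arrange that for $\alpha\neq\beta$ in $I$ the $\Q$-parts admit a canonical common extension whose letters outside the appropriate root do not interact.

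The core of the proof is then, for any fixed $\alpha\neq\beta\in I$, producing a common extension in $\P$. Set $A=B_\alpha\cap B_\beta$ (passing to its $L_0$-closure, which remains in $\mathcal I$ since $\mathcal I$ is closed under intersections and all elements of $\mathcal I$ are $L_0$-closed), and $B=B_\alpha\cup B_\beta\in\mathcal I$; we still have $\Dp(B)<\Rk(\mathcal T)$, and by Lemma~\ref{l.mainlemma} both $\P_{B_\alpha},\P_{B_\beta}\lessdot\P_B$. Let $P_\alpha^0=p_0(P_\alpha,A,B_\alpha)$ be the canonical reduction of $P_\alpha$ to $\P_A$ given by Lemma~\ref{l.mainlemma}; by our refinement choices and the choice of stems on $R$, $P_\alpha^0$ and $P_\beta$ agree on $R\cap L_0$ and their stems agree on $R\cap L_1$, so we can build a single condition $Q_0\in\P_{B_\beta}$ extending both $P_\alpha^0$ and $P_\beta$ by combining the $\Q$-parts via the Knaster-compatibility of (d) and, for each $x\in R\cap L_1$, forming the canonical $\BbS$-extension of $p_\alpha(x)$ and $p_\beta(x)$ below the extension of $Q_0\srestrict L_x$ (using that $\BbS$ is $\sigma$-Suslin, so matching stems give compatibility via $Q_0\srestrict L_x$). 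Finally, the extension clause of Lemma~\ref{l.mainlemma} applied to $A\subseteq B_\beta\subseteq B$ with $A=B_\beta\cap B_\alpha$ gives a $Q\in\P_B$ extending both $Q_0$ and $P_\alpha$, and hence both $P_\alpha$ and $P_\beta$.

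The main obstacle is step (c), the matching of $\BbS$-name data on the root $R$: names are not elements of a countable set, so a naive pigeonhole on names fails. What saves us is that $\sigma$-Suslinness makes stems the only data needed for pairwise compatibility of $\BbS$-conditions (modulo the base forcing), while the canonical reduction/extension machinery of Lemma~\ref{l.mainlemma} replaces the need to literally match names by the ability to glue along canonical reductions. Once this is in place, the Knaster conclusion follows because the above construction produces, for every pair $\alpha\neq\beta\in I$, a witness to compatibility.
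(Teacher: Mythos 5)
Your proof has a genuine gap in the handling of the $\BbS$-name data at coordinates in the root $R\cap L_1$, and it is exactly the point you flag as "the main obstacle." You claim that "$\sigma$-Suslinness makes stems the only data needed for pairwise compatibility of $\BbS$-conditions (modulo the base forcing)." That is not what Definition~\ref{d.sigma_suslin} says: $n$-Suslinness gives compatibility of $(s,f)$ and $(t,h)$ only when $(s,f)\leq_{\BbS}(t,g)$ and $h\restrict n\cdot|s|=g\restrict n\cdot|s|$. Applied to two conditions with a common stem $s_x$, this yields compatibility only if the two \emph{function} parts agree on their first $n\cdot|s_x|$ values. So at the step where you want a $Q_0\in\P_{B_\beta}$ extending both $P_\alpha^0$ and $P_\beta$, you need $p_\alpha^0(x)$ and $p_\beta(x)$ to be compatible, and matching stems alone do not give you that; without further work, the names $\dot f^{p_\alpha}_x$ and $\dot f^{p_\beta}_x$ can decide incompatible initial segments below the glued $\Q$-part. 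Once this $Q_0$ is unavailable, the final application of the extension clause of Lemma~\ref{l.mainlemma} never gets off the ground.

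The paper closes this gap by a further pigeonhole: in the refinement step, each $Q_\alpha\restrict L_x$ is (after extending) arranged to decide both $\pi_0(q_\alpha(x))=\check s_x$ \emph{and} $\pi_1(q_\alpha(x))\restrict n\cdot|s_x|=\check t_x$ for a fixed $t_x\in\omega^{n\cdot|s_x|}$; there are only countably many possibilities for the pair $(s_x,t_x)$ at each $x$ in the finite root, so this is another countable pigeonhole. That extra matching of $t_x$ is precisely what lets the $n$-Suslin property produce a common $\BbS$-extension. If you add this to your step (c), the rest of your scheme (canonical reduction $P_\alpha^0$ to $\P_{B_\alpha\cap B_\beta}$, a $Q_0\in\P_{B_\beta}$ extending $P_\alpha^0$ and $P_\beta$, then the extension clause of Lemma~\ref{l.mainlemma} to produce $Q\in\P_{B_\alpha\cup B_\beta}$) should go through, and it is a genuinely different organization from the paper's: the paper instead builds the common extension by a finite induction through the $L_1$-coordinates $x_0<\dots<x_{m-1}$, at each step strengthening the $\Q$-part and using $\sigma$-Suslinness to merge the two $\BbS$-names, rather than routing through canonical reductions and the extension clause. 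One further remark: your outer induction on $\Rk(\mathcal T)$ is never actually invoked (you use Lemma~\ref{l.mainlemma} and the hypotheses on $\Q$ and $\BbS$ directly, not the inductive hypothesis that lower-rank iterations are Knaster), so it can be dropped.
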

\begin{proof} Let $\langle q_\alpha:\alpha<\omega_1\rangle$ be an arbitrary sequence of conditions in $\P$. Since $\Q$ is Knaster
we can assume that $\la Q_\alpha\restrict L_0:\alpha<\omega_1\ra$ are pairwise compatible in $\Q$. Applying the $\Delta$-system lemma and the fact that $\Q$ is Knaster, we can assume that for all distinct $\alpha,\beta <\omega_1$
$\dom(q_\alpha)\cap\dom(q_\beta)=F$ for some fixed finite set $F\subseteq L$ and that $Q_\alpha\restrict L_0$, $Q_\beta\restrict L_0$ are compatible. Furthermore we can assume that for all $x\in F\cap L_1$ there are $s_x\in{^{<\omega}\omega}$, $t_x\in{^{n\cdot |s_x|}\omega}$ such that if $B$ is a witness to $Q_\alpha\restrict L_x^=\in\P$, then
$Q_\alpha\restrict L_x\Vdash_{\P_B}\pi_0(q_\alpha(x))=\check{s}_x\wedge \pi_1(q_\alpha(x))\restrict n\cdot |s_x|=\check{t}_x$.

Fix $\alpha,\beta$ distinct. We will show that $Q_\alpha$, $Q_\beta$ are compatible in $\P$. Let $\{x_i\}_{i\in m}$ enumerate in $<_L$-increasing order
$(\dom(q_\alpha)\cup \dom(q_\beta))\cap L_1$, and let $R=(r,F)$ be a common extension of $Q_\alpha\restrict L_0$ and $Q_\beta\restrict L_0$. Passing to an extension if necessary, we can assume that $R\srestrict L_{x_0}$ is a strong $\Q_{L_0\cap L_{x_0}}$-reduction of $R$. Furthermore there are $R^*_0\leq_{\Q_{L_{x_0}}} R\srestrict L_{x_0}$ and $t(x_0)$ such that  $R^*_0\Vdash_{\Q_{L_{x_0}}} t(x_0)\leq q_\alpha(x_0),\; q_\beta(x_0)$.
Let $R^*=(r^*,F^*)$ and let $R_0=(r_0, F_0)=(r^*_0\cup\{(x_0, t(x_0))\}\cup r\restrict L\backslash L_{x_0}, F^*_0\cup F)$. Since $R\srestrict L_{x_0}$ is a strong $\Q_{L_{x_0}}$-reduction of $R$, we obtain that $R_0\leq_{\P} R$. Furthermore $R_0\srestrict L_{x_1}$ is a common extension of $Q_\alpha\srestrict L_{x_1}$ and $Q_\beta\srestrict L_{x_1}$ (in $\P$).

Suppose for some $i< m-1$ we have a condition $R_i=(r_i, F_i)\leq_\P R$ such that $r_i\restrict L\backslash L_{x_i}^==r\restrict L\backslash L_{x_i}^=$,
$(R_i\srestrict L_{x_i})\srestrict L_0$ is a strong $\Q_{L_{x_i}}$-extension of $R_i$ and
$R_i\srestrict L_{x_i}\leq_{\P_{L_{x_i}}} Q_\alpha\srestrict L_{x_i}, Q_\beta\srestrict L_{x_i}$.
Then we can find an extension $R_i^*$ of $R_i\srestrict L_{x_i}$ in $\P_{L_{x_i}}$ and name $t(x_{i+1})$ such that
$R_i^*\Vdash_{\P_{L_{x_i}}} t(x_{i+1})\leq q_\alpha(x_{i+1}),q_\beta(x_{i+1})$.
Let $R_i^*=(r_i^*, F_i^*)$. Since $(R_i\srestrict L_{x_i})\restrict L_0$ is a strong $\Q_{L_{x_i}}$-reduction of $R_i$,
we obtain $R_{i+1}=(r_{i+1},F_{i+1})= (r_i^*\cup\{(x_{i+1}, t(x_{i+1}))\}\cup r\restrict L\backslash L_{x_{i+1}}, F_i^*\cup F_i)\leq_\P R_i$
and $R_{i+1}\srestrict L_{x_{i+2}}\leq Q_\alpha\srestrict L_{x_{i+2}}, Q_\beta\srestrict L_{x_{i+2}}$. Then for  $i=m$, we obtain $R_m\leq_{\P} Q_{\alpha},Q_{\beta}$.
\end{proof}

We omit the proofs of the next three Lemmas since they follow very closely the proofs of \cite[Lemma 1.3, Lemma 1.4 and Lemma 1.5]{brendle03}

\begin{lemma}\label{l.Sembed}
Let $x\in L_1$, $A\in\mathcal I_x$. Then the two-step iteration $\P_A*\BbS$ completely embeds into $\P$.
\end{lemma}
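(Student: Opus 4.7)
The plan is to define an embedding $e\colon \P_A*\dot\BbS\to\P$ (where $\P=\P(\mathcal T,\Q,\BbS)$) and check it satisfies the three conditions of complete containment from Section~\ref{complete_embeddings}. Set
$$
e\bigl((p,F^p),(\check s,\dot f)\bigr)=\bigl(p\cup\{(x,(\check s,\dot f))\},\,F^p\bigr).
$$
Since $A\in\mathcal I_x$ and $(P,(\check s,\dot f))\in\P_A*\dot\BbS$, the set $A$ itself witnesses that $e(P,(\check s,\dot f))\in\P$: indeed $P\srestrict L_x=P\in\P_A$ (as $\dom(P)\subseteq A\subseteq L_x$) and $(P,(\check s,\dot f))\in\P_A*\dot\BbS$ by assumption. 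Order preservation in both directions is then immediate from Definition~\ref{template_poset}, using $A$ as the witness on the $\P$-side.

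The reduction property is the heart of the argument. Given $R=(r,F^r)\in\P$, I will produce a reduction in $\P_A*\dot\BbS$. The main case is $x\in\dom(r)$: then $r(x)=(\check s^r_x,\dot f^r_x)$ is a $\P_B$-name for an $\BbS$-condition, where $B\in\mathcal I_x$ witnesses $R\srestrict L_x^=\in\P$. Replacing $B$ by $A\cup B\in\mathcal I_x$ if necessary, one may assume $A\subseteq B$. By Lemma~\ref{l.mainlemma}, $R\restrict L_x$ admits a canonical reduction $R_0\in\P_A$; since $\BbS$ is good (Definition~\ref{d.good_suslin}), $(\check s^r_x,\dot f^r_x)$ admits a canonical projection $(\check s^r_x,\dot g)$ below $R_0$. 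The candidate reduction of $R$ is then $(R_0,(\check s^r_x,\dot g))\in\P_A*\dot\BbS$. The subcase $x\notin\dom(r)$ is simpler: take $R_0\in\P_A$ to be the canonical reduction of $R\restrict L_x$ and pair with the trivial $\BbS$-condition.

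The main technical step is verifying that this is a genuine reduction. Given an arbitrary extension $(Q_0,(\check s_0,\dot g_0))\leq_{\P_A*\dot\BbS}(R_0,(\check s^r_x,\dot g))$, I need a common $\P$-extension of $e(Q_0,(\check s_0,\dot g_0))$ and $R$. First, Lemma~\ref{l.mainlemma} supplies a common extension $Q'\in\P_B$ of $Q_0$ and $R\restrict L_x$. Then, mirroring Case~2 of the proof of Lemma~\ref{l.mainlemma}, I invoke the $n$-Suslinness of $\BbS$ together with the defining property of canonical projections from Lemma~\ref{l.projection_name_real} (each $a\in\mathcal A_n$ is a reduction of some $b\in\mathcal B_n$ with $s(a)=s(b)$) to find, below a suitable extension of $Q'$ in $\P_B$, a nice $\BbS$-name $(\check s^q_x,\dot f^q_x)$ forced to extend both $(\check s_0,\dot g_0)$ and $(\check s^r_x,\dot f^r_x)$. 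Placing this name at $x$ yields a condition in $\P$ extending both $e(Q_0,(\check s_0,\dot g_0))\srestrict L_x^=$ and $R\srestrict L_x^=$; Lemma~\ref{l.paste} then lets me paste in the remaining elements of $\dom(r)\setminus L_x^=$ to obtain the desired common $\P$-extension. Incompatibility preservation is dual: any common $\P$-extension of two images yields, via Lemma~\ref{l.mainlemma} and the goodness of $\BbS$ applied to the value at $x$, a common $\P_A*\dot\BbS$-extension of the original two conditions.

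I expect the $n$-Suslin / canonical projection matching step to be the main obstacle, since it reproduces the most delicate portion of the Main Lemma's proof: one must carefully interleave the combinatorics of canonical projections of $\BbS$-names with the completeness of $\P_{A}\lessdot\P_{B}$ (and with the $n$-Suslin criterion for compatibility) to extract a common $\BbS$-extension in the right model.
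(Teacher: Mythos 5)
Your proposal is correct and follows the route the paper intends: the proof is omitted in the paper (deferred to Brendle's Lemma 1.3), and your argument --- take a witness $B\supseteq A$ in $\mathcal I_x$, form the canonical reduction of $R\srestrict L_x$ to $\P_A$ via Lemma~\ref{l.mainlemma}, attach the canonical projection of the name at $x$ supplied by goodness of $\BbS$, and verify the reduction property by the $n$-Suslin matching of Case~2 (here simplified, since the roles of $\bar C$ and $\bar A$ coincide) followed by pasting via Lemma~\ref{l.paste} --- is precisely that argument. The only remark worth adding is that the delicate matching step can be black-boxed: for conditions supported in $L_x^=$ with common witness $B$, compatibility in $\P$ amounts to compatibility in $\P_B*\dot\BbS$, so Lemma~\ref{l.suslinembedd} together with $\P_A\lessdot\P_B$ already yields the required reduction without re-running the Suslin combinatorics.
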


\begin{lemma}\label{l.names}
For any $p\in\P(\mathcal T,\Q,\BbS)$ there is a countable set $A\subseteq L$ such that $p\in\P_{\cl(A)}$. Similarly, if $\tau$ is a $\P$-name for a real then there is a countable $A\subseteq L$ such that $\tau$ is a $\P_{\cl(A)}$-name.
\end{lemma}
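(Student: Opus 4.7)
The plan is to prove both statements simultaneously by induction on $\Rk(\mathcal T)$. The base case $\Rk(\mathcal T)=0$ is immediate since $\P=\Q_{L_0}$ and every condition has finite support; and a nice name for a real below such a condition (its antichains are countable by ccc, which holds in all intended applications via Lemma~\ref{l.template_knaster}) involves only countably many finitely-supported conditions, so the union of their occurrence sets is countable.

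For the inductive step of the first part, let $P=(p,F^p)\in\P$; if $x_p$ is undefined, the finite set $A=\oc(p\restrict L_0)\cup\oc(F^p)$ works. Otherwise, fix a witness $B\in\mathcal I_{x_p}$, so $\Rk(\mathcal T_B)<\Rk(\mathcal T)$, $P\srestrict L_{x_p}\in\P_B$, and $p(x_p)=(\check s_x^p,\dot f_x^p)$ with $\dot f_x^p$ a $\P_B$-name for a real. The inductive hypothesis (first part, applied to the lower-ranked template $\mathcal T_B$) yields a countable $A_0\subseteq B$ with $P\srestrict L_{x_p}\in\P_{\cl(A_0)}$, and the inductive hypothesis (second part, also at lower rank) yields a countable $A_1\subseteq B$ so that $\dot f_x^p$ is a $\P_{\cl(A_1)}$-name. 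Set
\[
A=A_0\cup A_1\cup\{x_p\}\cup\oc(p\restrict L_0)\cup\oc(F^p),
\]
a countable union of countable sets, hence countable. The verification that $P\in\P_{\cl(A)}$ then uses that $P\srestrict L_{x_p}\in\P_{\cl(A_0)}\subseteq\P_{\cl(A)}$, and that $\dot f_x^p$ genuinely remains a $\P_{\cl(A)}$-name for the same real---this last point rests on the complete-embedding clause $\P_{\cl(A_1)}\lessdot\P_B$ of Lemma~\ref{l.mainlemma}, which is available because $\cl(A_1)\subseteq B$ (since $A_1\subseteq B$ and $B$ is $L_0$-closed).

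For the inductive step of the second part, given a $\P$-name $\tau$ for a real, use ccc to represent $\tau$ by a nice name whose countably many maximal antichains $\mathcal A_n$ are themselves countable. Apply the just-established first part to each condition $q\in\bigcup_n\mathcal A_n$ to obtain a countable set $A_q\subseteq L$ with $q\in\P_{\cl(A_q)}$; then $A=\bigcup_q A_q$ is a countable union of countable sets, hence countable, and witnesses that $\tau$ is a $\P_{\cl(A)}$-name. The main obstacle throughout is the verification step in the inductive argument for the first part: one must check that the recursive reinterpretation of the $\P_B$-name $\dot f_x^p$ as a $\P_{\cl(A)}$-name preserves its meaning, which is precisely where the complete-embedding content of Lemma~\ref{l.mainlemma} is indispensable, and why one must take care that the intermediate sets produced by the recursion are both $L_0$-closed and contained in a member of $\mathcal I$.
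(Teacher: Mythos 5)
Your proof is correct and is essentially the argument the paper has in mind: the paper omits this proof, referring to \cite{brendle03} (Lemmas 1.3--1.5), which use exactly the rank induction you give, peeling off the top coordinate $x_p$, invoking the inductive hypothesis (both halves) on the lower-rank witness $B\in\mathcal I_{x_p}$, and using $L_0$-closedness of $B$ to ensure $\cl(A_0\cup A_1)\subseteq B$ so that $B\cap\cl(A)$ serves as the required witness in $\mathcal I\restrict\cl(A)$. The only caveat, which you rightly flag, is that the second half of the statement tacitly presupposes that $\P$ (and the $\P_B$) are ccc, which is not literally among the hypotheses of the lemma as stated but holds in all intended applications by the Knaster property of $\Q$ and Lemma~\ref{l.template_knaster}.
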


\begin{lemma}
Let $\mathcal J\subseteq \mathcal I$ be such that $\mathcal T_{\mathcal J}=((L,\leq),\mathcal J,L_0,L_1)$ is a template. Suppose $\mathcal J$ is cofinal in $\mathcal I$. Then $\P(\mathcal T_{\mathcal J},\Q,\BbS)$ is forcing equivalent to $\P(\mathcal T,\Q,\BbS)$.
\end{lemma}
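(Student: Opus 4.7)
The plan is to prove the stronger statement that $\P(\mathcal{T}_{\mathcal{J}},\Q,\BbS)$ and $\P(\mathcal{T},\Q,\BbS)$ coincide as forcing notions under the natural identification that sends a condition $(p, F^p)$ of the first iteration (with some witness $B \in \mathcal{J}$) to the same pair viewed as a condition of the second iteration (with the witness $B$, now regarded as an element of $\mathcal{I}$). This is obviously enough to deduce forcing equivalence. I argue by induction on $\Rk(\mathcal{T})$, establishing simultaneously that $\P(\mathcal{T}_{\mathcal{J}, A},\Q,\BbS) = \P(\mathcal{T}_A,\Q,\BbS)$ for every closed $A \subseteq L$, where $\mathcal{T}_{\mathcal{J}, A}$ denotes the obvious restriction of $\mathcal{T}_{\mathcal{J}}$ to $A$. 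The base case $\Rk = 0$ is trivial since both posets reduce to $\Q_{L_0 \cap A}$.

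For the inductive step, the forward containment $(\subseteq)$ is immediate: any $\mathcal{J}$-witness $B \in \mathcal{J}_{x_p}$ is also an $\mathcal{I}$-witness, and the inductive hypothesis applied at $B$ identifies the relevant sub-iterations and the class of admissible $\BbS$-names. For $(\supseteq)$, let $P = (p, F^p) \in \P(\mathcal{T}_A, \Q, \BbS)$ have witness $B \in \mathcal{I}_{x_p}$. Cofinality of $\mathcal{J}$ in $\mathcal{I}$ provides some $B_0 \in \mathcal{J}$ with $B \subseteq B_0$; combined with the template axioms for $\mathcal{T}_{\mathcal{J}}$ (Definition~\ref{d.template}) this yields a $B^* \in \mathcal{J}_{x_p}$ with $B \subseteq B^*$. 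By Lemma~\ref{l.mainlemma} we have $\P_B \lessdot \P_{B^*}$ inside the $\mathcal{I}$-iteration, so $P \srestrict L_{x_p} \in \P_{B^*}$ and $p(x_p)$ is a $\P_{B^*}$-name for a condition in $\BbS$. The inductive hypothesis at $B^*$ (whose $\mathcal{I}$-rank is strictly smaller than that of $A$) identifies this $\P_{B^*}$ with the poset computed from $\mathcal{T}_{\mathcal{J}}$, so $B^*$ serves as a legitimate $\mathcal{J}$-witness exhibiting $P \in \P(\mathcal{T}_{\mathcal{J}, A}, \Q, \BbS)$. The order relations $\leq_{\P(\mathcal{T}_A, \Q, \BbS)}$ and $\leq_{\P(\mathcal{T}_{\mathcal{J}, A}, \Q, \BbS)}$ coincide by the same witness-lifting argument, now applied to the witness of the order relation rather than of membership.

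The main obstacle is the combinatorial witness-lifting step: given $B \in \mathcal{I}_{x_p}$, produce $B^* \in \mathcal{J}_{x_p}$ containing $B$. A set $B_0 \in \mathcal{J}$ obtained by direct cofinality need not lie in $L_{x_p}$; when $x_p \notin B_0$, one can replace $B_0$ by $B_0 \cap L_{x_p} \in \mathcal{J}$ via clause (3) of Definition~\ref{d.template} applied to $\mathcal{T}_{\mathcal{J}}$, but the case $x_p \in B_0$ must be handled by a more delicate argument combining clauses (1) and (2) for $\mathcal{T}_{\mathcal{J}}$ to cover $B$ by a finite union of elements of $\mathcal{J}_{x_p}$. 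Once this combinatorial point is settled, the remaining verifications are routine and closely mirror arguments already appearing in the proof of Lemma~\ref{l.mainlemma}.
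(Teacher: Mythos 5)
The paper itself omits a proof of this lemma, referring to Brendle's Lemma 1.5 in~\cite{brendle03}, so there is no ``paper proof'' to compare against directly; your strategy of proving outright equality of the posets by induction on rank, via lifting $\mathcal I$-witnesses to $\mathcal J$-witnesses, is the natural and likely intended route. However, you have correctly located the crux --- producing $B^*\in\mathcal J_{x_p}$ with $B\subseteq B^*$ --- and have \emph{not} closed it. Your fallback of covering $B$ by a finite union of elements of $\mathcal J_{x_p}$ using clauses (1)--(2) does not work: clause (2) produces, for each \emph{single} point $z<x_p$, a set $A_z\in\mathcal J_{x_p}$ with $z\in A_z$, but $B$ is in general infinite (witnesses are $L_0$-closed members of $\mathcal I$, not finite sets), so no finite union of the $A_z$ need cover $B$, and clause (1) only gives closure under \emph{finite} unions.

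Worse, the witness-lifting claim is actually false under the literal reading of ``cofinal'' as: for every $A\in\mathcal I$ there is $C\in\mathcal J$ with $A\subseteq C$. Take $L=L_1=\omega+1$, $L_0=\emptyset$, $\mathcal I=\{\emptyset\}\cup\{[0,n]:n<\omega\}\cup\{\omega,L\}$, $\mathcal J=\mathcal I\setminus\{\omega\}$. One checks both are two-sided templates and $\mathcal J$ is cofinal in $\mathcal I$ (the only element of $\mathcal I\setminus\mathcal J$ is $\omega$, and $\omega\subseteq L\in\mathcal J$). Yet $\omega\in\mathcal I_\omega$ has no superset in $\mathcal J_\omega=\{\emptyset\}\cup\{[0,n]:n<\omega\}$. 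So your inductive step for $(\supseteq)$ breaks down in Case 2, and with it the claim that the two posets literally coincide. Either the lemma is tacitly using a stronger, ``local'' notion of cofinality (for every $x\in L_1$, $\mathcal J_x$ is cofinal in $\mathcal I_x$) --- in which case the missing step is immediate and the rest of your argument goes through --- or one must abandon literal equality and argue instead that $\P(\mathcal T_{\mathcal J},\Q,\BbS)$ densely embeds, which would require replacing names (not just witnesses) and is a genuinely different and more delicate argument. As written, the proposal does not prove the lemma.
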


\section{$\mathfrak a_g$ can be $\aleph_\omega$}

We now start working towards the main theorem of the paper. The model in which $\hbox{cof}(\mathfrak a_g)=\omega$ is obtained by forcing with a poset of the form $\P(\mathcal T,\Q,\BbS)$, where $\Q$ being the poset $\Q_{L_0}$ that adds a cofinitary group with $L_0$ generators, $\BbS$ be localization forcing, and $\mathcal T$ is the particular template used by Brendle in \cite{brendle03}.

\subsection{Basic estimates for $\mathfrak a_g$} Before specifying $\mathcal T$, we prove two generally applicable Lemmas, which are parallel to \cite[Proposition 1.6 and Proposition 1.7]{brendle03}.

\begin{lemma}\label{l.nonM}
Let $\mathcal T$ be a template, let $\Q$ be a finite function poset with the complete embedding property and $L_0=\dom(\Q)$, let $\BbS=\BbL$ be localization forcing, and let $\mu$ be a regular uncountable cardinal. Suppose $\mu\subseteq L_1$ (as an order), that $\mu$ is cofinal in $L$, and that $L_\alpha\in\mathcal I$ for all $\alpha<\mu$. Then $\P(\mathcal T,\Q,\BbS)$ forces that $\non(\mathcal M)=\mu$ and $\mathfrak a_g\geq\mu$.
\end{lemma}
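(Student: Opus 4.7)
The plan is to prove $\non(\mathcal M) = \mu$ in $V[G]$, since then $\mathfrak a_g \geq \non(\mathcal M) = \mu$ follows from Theorem~\ref{t.agM}. Both bounds will rest on the following ``bounding principle'': any real $f \in V[G]$ lies in $V[G \cap \P_{L_\alpha}]$ for some $\alpha < \mu$, and more generally any family $F \subseteq \omega^\omega$ in $V[G]$ with $|F| < \mu$ is contained in some such $V[G \cap \P_{L_\alpha}]$. To see this, note that by Lemma~\ref{l.names} each $f$ is a $\P_{\cl(A_f)}$-name for some countable $A_f \subseteq L$; since $\mu \subseteq L_1$ is cofinal in $L$ and $\mu$ is regular uncountable, the countable $A_f$ is bounded below some $\alpha_f \in \mu$, so $A_f \subseteq L_{\alpha_f}$, and since $L_{\alpha_f} \in \mathcal I$ is $L_0$-closed by Definition~\ref{d.template}(5), also $\cl(A_f) \subseteq L_{\alpha_f}$. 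Regularity of $\mu$ then bounds $\{\alpha_f : f \in F\}$ by some $\alpha < \mu$.

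For the lower bound $\non(\mathcal M) \geq \mu$, I would show that every such $F$ is meager in $V[G]$. Given $\alpha < \mu$ with $F \subseteq V[G \cap \P_{L_\alpha}]$, pick any $\alpha' \in \mu$ with $\alpha' > \alpha$; then $L_\alpha \in \mathcal I_{\alpha'}$, and Lemma~\ref{l.Sembed} provides a complete embedding $\P_{L_\alpha} * \BbL \lessdot \P$. Under the natural form of this embedding, the $\BbL$-generic corresponds precisely to the slalom $\phi_{\alpha'}$ added at stage $\alpha'$, so $\phi_{\alpha'}$ is $\BbL$-generic over $V[G \cap \P_{L_\alpha}]$. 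By Lemma~\ref{l.localize}, $\phi_{\alpha'}$ therefore localizes every real in $V[G \cap \P_{L_\alpha}]$, in particular every $f \in F$. Hence
\[
F \;\subseteq\; B_{\phi_{\alpha'}} \;:=\; \bigcup_{m \in \omega} \bigl\{\,g \in \omega^\omega : (\forall n \geq m)\; g(n) \in \phi_{\alpha'}(n)\,\bigr\},
\]
and $B_{\phi_{\alpha'}}$ is a meager $F_\sigma$ set, because each $\phi_{\alpha'}(n)$ is a finite subset of $\omega$, which makes each closed piece nowhere dense in $\omega^\omega$ (any basic open $[s]$ contains $[s^\frown k]$ for any $k\notin\phi_{\alpha'}(|s|)$).

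For the upper bound $\non(\mathcal M) \leq \mu$, the $\mu$-indexed family $\Phi = \{\phi_\alpha : \alpha \in \mu\}$ of slaloms localizes every real in $V[G]$: any given real lies in $V[G \cap \P_{L_\beta}]$ for some $\beta < \mu$ by the bounding principle, and hence is localized by $\phi_{\alpha'}$ for any $\alpha' \in \mu$ with $\alpha' > \beta$. By Theorem~\ref{t.barto}(2), this gives $\cof(\mathcal N) \leq \mu$ in $V[G]$, whence $\non(\mathcal M) \leq \cof(\mathcal N) \leq \mu$ via the standard Cicho\'n inequality. The main point to spell out carefully is the bounding principle in the first paragraph, together with the identification of the abstract $\BbL$-generic produced by Lemma~\ref{l.Sembed} with the concrete slalom at stage $\alpha'$ in the $\P$-generic; once these are in hand, both the lower and upper bounds follow immediately from the Bartoszy\'nski--Cicho\'n machinery and Theorem~\ref{t.agM}.
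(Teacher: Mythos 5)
Your proof is correct and follows essentially the same route as the paper's: every real (or ${<}\mu$-sized family of reals) is captured in some $V[G\cap\P_{L_\alpha}]$ via Lemma~\ref{l.names} and the regularity and cofinality of $\mu$, and the slalom added at a later coordinate $\alpha'$ localizes all reals of that model via Lemmas~\ref{l.Sembed} and~\ref{l.localize}. The only (harmless) variation is that for the lower bound you show directly that the localization set of a single slalom is meager, whereas the paper routes through $\add(\mathcal N)\geq\mu$ via Theorem~\ref{t.barto}(1) and Cicho\'n's diagram; the upper bound via $\cof(\mathcal N)\leq\mu$ is identical.
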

\begin{proof} %[Proof of Lemma \ref{l.nonM}]
Let $G$ be $\P(\mathcal T,\Q,\BbL)$-generic over $V$ and work in $V[G]$. Let $\phi_\alpha$ be the slalom added in coordinate $\alpha<\mu$ (this makes sense by Lemma \ref{l.Sembed}.) Since $\mu$ is regular and uncountable and is cofinal in $L$ it is clear by Lemma \ref{l.names} that the family $\langle\phi_\alpha:\alpha<\mu\rangle$ localizes all reals $V[G]$ (since any real must appear in some $V[G\cap\P_{L_{\alpha}}]$ for some $\alpha<\mu$.) Thus $\cof(\mathcal N)\leq\mu$. On the other hand, if $F\subseteq\omega^\omega$ is a family of size $<\mu$ in $V[G]$, then there must be some $\alpha<\mu$ such that all reals of $F$ already are in $V[G\cap\P_{L_\alpha}]$, and so $\phi_\alpha$ localizes all reals in $F$. Thus $\add(\mathcal N)\geq\mu$. Therefore $\non(\mathcal{M})=\mu$ and so by Theorem \ref{t.agM} we have $\mathfrak a_g\geq\mu$.
\end{proof}

\begin{lemma}\label{l.mcgL0}
Let $\mathcal T$ be a template, and let $\Q=\Q_{L_0}$ be the poset for adding a cofinitary group with $L_0$ generators. Suppose that $L$ has uncountable cofinality and that $L_0$ is cofinal in $L$. Then $\P(\mathcal T,\Q,\BbS)$ adds a maximal cofinitary group of size $|L_0|$.
\end{lemma}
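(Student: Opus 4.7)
From the $\P$-generic filter $G$ I would define $\rho_G : L_0 \to S_\infty$ by $\rho_G(a)(n) = m$ iff $(a,n,m) \in s^P$ for some $P \in G$. Because every $P \in \P$ carries a $\Q_{L_0}$-coordinate $P \restrict L_0$ and the relation $\leq_\P$ of Definition \ref{template_poset} requires $(q \restrict L_0, F^q) \leq_{\Q_{L_0}} (p \restrict L_0, F^p)$, the density and forcing-lemma arguments of Lemma \ref{extension}, Lemma \ref{forcinglemma} and Proposition \ref{p.rhoG} go through verbatim, restricted to the $L_0$-coordinate of conditions; this shows $\rho_G$ is total and induces a cofinitary representation $\hat\rho_G : \F_{L_0} \to S_\infty$. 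Since $L$ has uncountable cofinality and $L_0$ is cofinal in $L$, $|L_0|$ is uncountable, and the faithfulness of $\hat\rho_G$ on $\F_{L_0}$ gives $|\im(\hat\rho_G)| = |L_0|$.

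For maximality, I would argue by contradiction: suppose $\sigma \in \cofin(S_\infty)^{V[G]} \setminus \im(\hat\rho_G)$ with $\rho_G \cup \{(b_0, \sigma)\}$ inducing a cofinitary representation of $\F_{L_0 \cup \{b_0\}}$ for a fresh symbol $b_0$. By Lemma \ref{l.names}, $\dot\sigma$ is a $\P_{\cl(A_0)}$-name for some countable $A_0 \subseteq L$. The uncountable cofinality of $L$ yields $y \in L$ with $A_0 \subseteq L_y^=$, hence $A := \cl(A_0) \subseteq L_y^=$. Cofinality of $L_0$ in $L$ then produces $a_0 \in L_0$ with $a_0 > y$, so $a_0 \notin A$; Lemma \ref{l.mainlemma} applied with $B = L \in \mathcal I$ gives $\P_A \lessdot \P$. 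Fixing $P_0 \in G$ that forces the extended representation to be cofinitary, I will show the set
\[
D_N = \{P \leq P_0 : (\exists n \geq N)(\exists m)\; (a_0,n,m) \in s^P \wedge P \Vdash \dot\sigma(n) = m\}
\]
is dense below $P_0$ for every $N$. By genericity this forces $\rho_G(a_0)(n) = \sigma(n)$ for infinitely many $n$, making $\fix(\rho_G(a_0)^{-1}\sigma)$ infinite and contradicting cofinitariness of the extended representation.

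To establish density, take $P_1 \leq P_0$; using $\P_A \lessdot \P$ and the common-extension clause of Lemma \ref{l.mainlemma}, extend the $A$-part of $P_1$ to $P_1'$ whose $A$-part decides $\dot\sigma \restrict K$ to be an injection $\tau : K \to \omega$, for $K$ to be chosen, while keeping $P_1 \restrict (L \setminus A)$ intact. The heart of the argument is an application of Lemma \ref{cofinlemma}, in a forcing-statement formulation about $V[G \cap \P_A]$, with the substitutions $A \leadsto L_0 \setminus A$, $B \leadsto (L_0 \cap A) \cup \{b_0\}$, $\rho \leadsto (\rho_G \restrict (L_0 \cap A)) \cup \{(b_0, \sigma)\}$ and $a_0 \leadsto a_0$: the hypothesis on $\sigma$ yields the cofinitary-representation assumption, and the conclusion produces $N_1$ with the property that adjoining $(a_0, n, \sigma(n))$ extends any $\Q_{L_0 \setminus A, \rho_G \restrict (L_0 \cap A)}$-condition for $n \geq N_1$. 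By Lemma \ref{forcingcompatible} (together with the factoring of $\Q_{L_0}$ given by Lemma \ref{twostep} as $\Q_{L_0 \cap A} * \Q_{L_0 \setminus A, \rho_{\dot H}}$), this lifts to an actual $\leq_{\Q_{L_0}}$-extension of $(s^{P_1'}, F^{P_1'})$. Choosing $K$ so large that some $n \in [\max(N,N_1),K)$ satisfies $n \notin \dom(s^{P_1'}_{a_0})$ and $\tau(n) \notin \ran(s^{P_1'}_{a_0})$ (possible since these sets are finite and $\tau$ is injective), and setting $m = \tau(n)$, the condition $P_2$ obtained from $P_1'$ by adjoining $(a_0, n, m)$ to $s^{P_1'}$ (with other coordinates unchanged) extends $P_1'$ in $\P$ by Definition \ref{template_poset}(ii), possibly enlarging the witness $B$ using Definition \ref{d.template}(2) to include $a_0$; visibly $P_2 \in D_N$.

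The main obstacle is precisely the ``translation'' step in the previous paragraph: Lemma \ref{cofinlemma} is a local statement in $\Q_{L_0 \setminus A, \rho_A}$, whereas the required conclusion is a $\Q_{L_0}$- (and hence $\P$-) extension in $V$. The bridge is the combination of Lemma \ref{twostep} (which factors $\Q_{L_0}$ correctly along $A$) with Lemma \ref{forcingcompatible} (which converts a forced extension in the second factor into an actual extension in $\Q_{L_0}$). Everything else is bookkeeping using the template machinery of Section 3.
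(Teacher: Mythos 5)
Your proposal is correct and follows essentially the same route as the paper: the generic group comes from the $\Q_{L_0}$-coordinates via Proposition \ref{p.rhoG}, and maximality is obtained by pushing the countable support of $\dot\sigma$ (Lemma \ref{l.names}) below some $a_0\in L_0$ using the cofinality hypotheses, then running the density argument powered by Lemma \ref{cofinlemma}. The only difference is presentational: the paper establishes density of $D_{\sigma,N}$ in the quotient $\P/H$ working over $V[H]$ with $H=G\cap\P_{L_x}$, whereas you phrase it as density in $V$ below $P_0$ with the amalgamation made explicit through Lemmas \ref{twostep} and \ref{forcingcompatible} — the same argument standing in a different model.
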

\begin{proof}
Let $G$ be $\P=\P(\mathcal T,\Q,\BbS)$-generic. Let $\rho_G:L_0\to S_\infty$ be defined as follows: for every $x\in L_0$ let
$\rho_G(x)=\{s^p_x: P\in G\wedge P\rest L_0=(s^p, F^p)\}$. Note that $\rho_G=\bigcup\{s^p_x: P\in G\cap \P_{L_0}\}$ and so
by Proposition \ref{p.rhoG} the function $\rho_{G}$ induces a cofinitary representation $\hat\rho_{G}$ of $\F_{L_0}$. We will show that
$\im(\rho_{G})$ is a maximal cofinitary group (which then clearly has size $|L_0|$.)

Suppose not. Then there is a permutation $\sigma\in\cofin(S_\infty)$ and $b_0\notin L_0$ such that $\rho_{G}':L_0\cup\{b_0\}\to S_\infty$, defined by $\rho'_{G}\restrict L_0=\rho_{G}$ and $\rho'_{G}(b_0)=\sigma$, induces a cofinitary representation. Let $\dot\sigma$ be a $\P$-name for $\sigma$ in $V$. Then by Lemma \ref{l.names} there is a countable set $A\subseteq L$ such that $\dot\sigma$ is a $\P_{\cl(A)}$-name. Since $L_0$ is cofinal in $L$ and $L$ has uncountable cofinality, there is some $x\in L_0$ such that $\cl(A)\subseteq L_x$ and so $\P_{\cl(A)}\lessdot \P_{L_x}$. Let $G_0=G\cap \P_{L_0}$ and $H=G\cap \P_{P_x}$.

\begin{clm} $V[H]\vDash "D_{\sigma,N}=\{P\in(\PP/H): \exists n\geq N(s^p_x(n)=\sigma(n))\;\hbox{where}\; P\rest L_0=(s^p, F^p)\}\;\hbox{is dense}."$.
\end{clm}
\begin{proof} Let $P_0\in (\P/H)$. Thus $P\restrict L_0\cap L_x\in H_0:=G\cap \P_{L_0\cap L_x}$.  By Lemma \ref{cofinlemma} the set
$$V[H_0]\vDash D_{\sigma, N,x}^0=\{p\in (\Q_{L_0}/\Q_{L_x\cap L_0}) : (\exists n\geq N) s^p_x(n)=\sigma(n)\;\hbox{is dense}\}.$$
Thus there is $(t,E)\leq (s^{p_0}\rest L_0\backslash L_x, F^{p_0})$ such that $(t,E)\in D^0_{\sigma,N}$ i.e. $t_x(n)=\sigma(n)$ for some $n\geq N$.
Define $P_1\in \P/H$ as follows: $P_1\rest L_x= P_0\rest L_x$, $P_1\rest (L_0\backslash L_x)=(t,E)$, $P_1\rest L_1\backslash L_x=P_0\rest L_1\backslash L_x$. Then in $V[H]$ we have $P_1\leq P_0$ and $P_1\in D_{\sigma,N}$.
\end{proof}
Then in $V[G]$ there are infinitely many $n$ such that $\sigma(n)=\sigma_x(n)$, contradicting the fact that $\rho^\prime_G$ induces a cofinitary representation.
\end{proof}

\begin{section}{The Isomorphism of Names Argument}

Until the end of the paper assume CH. We will use the template construction developed by J. Brenlde and S. Shelah to show that the minimal
size of a maximal almost disjoint family can be of countable cofinality (see~\cite{brendle03}).

Let $\lambda$ be a cardinal of countable cofinality and more precisely, let $\lambda=\bigcup_{n\in\omega}\lambda_n$,
where $\{\lambda_n\}_{n\in\omega}$ is a strictly increasing sequence of regular cardinals, $\lambda_0\geq\aleph_2$,
$\lambda_n^{\aleph_0}=\lambda_n$ for all $n$, and $\kappa^{\aleph_0}<\lambda_n$ for $\kappa<\lambda_n$.  In the following,
let $\mu^*$ denote a disjoint copy of $\mu$, with the reverse ordering.  Let $<_{\mu}$ denote the ordering of $\mu$. We will
refer to the elements of $\mu$ as {\emph{positive}} and to the elements of $\mu^*$ as {\emph{negative}}.
If $\alpha\neq \beta\in \lambda^*\cup\lambda$, we will say that $\alpha<_{\lambda^*\cup\lambda} \beta$,
if either $\alpha\in\lambda^*$ and $\beta\in\lambda$, or both are in $\lambda$ and $\alpha<\beta$, or both are
in $\lambda^*$ and $\alpha<_{\lambda^*}\beta$. For each $n$ fix a partition
$\lambda_n^*=\bigcup_{\alpha<\omega_1} S^\alpha_n$, where the $S^\alpha_n$'s are co-initial in $\lambda_n^*$
and for $m<n$, $S^\alpha_n\cap\lambda^*_m=S^\alpha_m$. Definitions 5.1 - 5.5 and Lemma 5.6 can be found in~\cite{brendle03}.

\begin{definition} Let $L=L(\lambda)$ consist of all finite, nonempty sequences $x$ such that
\begin{enumerate}
{\item $x(0)\in \lambda_0$,}
{\item $x(n)\in \lambda_n^*\cup\lambda_n$ for $0<n<|x|-1$,}
{\item for $|x|\geq 2$, if $x(|x|-2)$ is positive, then
$x(|x|-1)\in\lambda_{|x|-1}^*\cup \lambda$ and if $x(|x|-2)$ is negative, then $x(|x|-1)\in \lambda^*\cup \lambda_{|x|-1}$.}
\end{enumerate}
Whenever $x,y\in L$ let $x<y$ if and only if
\begin{enumerate}
{\item either $x\subset y$ and $y(|x|)$ is positive,}
{\item or $y\subset x$ and $x(|y|)$ is negative,}
{\item or $n=\hbox{min}\{k:x(k)\neq y(k)\}$ is defined and
$x(n)<_{\lambda^*\cup\lambda} y(n)$.}
\end{enumerate}
\end{definition}

Clearly $(L,<)$ is a linear order.

\begin{definition} Let $L_1=\{x\in L: |x|=1\;\hbox{or}\; x(|x|-1)\in\lambda^*_{|x|-1}\cup\lambda_{|x|-1}\}$
and let $L_0=L\backslash L_1$.
\end{definition}

\begin{remark}
Note that $x\in L_0$ if and only if $|x|\geq 2$ and if $x(|x|-2)$ is positive, then $x(|x|-1)\in [\lambda_{|x|-1},\lambda)$
and if $x(|x|-2)$ is negative, then $x(|x|-1)\in (\lambda^*,\lambda^*_{|x|-1}]$. Note also that both $L_0$ and $L_1$ are
cofinal in $L$, and that neither of them is of countable cofinality.
\end{remark}

\begin{definition} Let $L_{\hbox{rel}}$ be the subset of $L_1$ of all $x$ such that $|x|\geq 3$ is odd, and
$x(n)\in \lambda_n^*$ for odd $n$, $x(n)\in\lambda_n$ for even $n$, $x(|x|-1)\in\omega_1$ and whenever $n<m$ are
even such that $x(n),x(m)$ are in $\omega_1$, then there are $\beta<\alpha$ such that $x(n-1)\in S^\alpha_{n-1}$ and
$x(m-1)\in S^\beta_{m-1}$. We refer to the elements of $L_{\hbox{rel}}$ as {\emph{relevant}}.
\end{definition}

For $x\in L_{\hbox{rel}}$ let $J_x=\{z\in L: x\rest (|x|-1)\leq z< x\}$. If $x<y$ are relevant, then either $J_x\cap J_y=\emptyset$
or $J_x\subseteq J_y$. In the latter case also $|y|\leq |x|$, $x\rest (|y|-1)=y\rest(|y|-1)$ and $x(|y|-1)\leq y(|y|-1)$.

\begin{definition} Let $\mathcal{I}=\mathcal{I}(\lambda)$ be the collection of all sets of the form:
$$(\bigcup_{\alpha\in I_0} L_\alpha)\cup (\bigcup_{x\in I_1} \hbox{cl}(J_x))\cup (\bigcup_{x\in I_2}\hbox{cl}(\{x\}))
\cup (\bigcup_{x\in I_3} L_x\cap L_0),$$
where $I_0\in [\lambda_0\cup\{\lambda_0\}]^{<\omega}$, $I_1\in [L_{\hbox{rel}}]^{<}$ and $I_2,I_3$ are in $[L_1]^{<\omega}$.
\end{definition}

\begin{lemma}[Lemma 2.1,~\cite{brendle03}] $\mathcal{T}=((L,\leq), \mathcal{I}, L_0,L_1)$ is a two-sided template.
\end{lemma}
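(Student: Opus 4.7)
The plan is to verify in turn the five clauses of Definition~\ref{d.template} for $\mathcal{T}=((L,\leq),\mathcal{I},L_0,L_1)$, closely following Brendle's argument in \cite[Lemma 2.1]{brendle03}. Because $\mathcal{I}$ is presented as the collection of all finite unions of four types of generators---$L_\alpha$ for $\alpha\in\lambda_0\cup\{\lambda_0\}$, $\cl(J_x)$ for $x\in L_{\hbox{rel}}$, $\cl(\{x\})$ for $x\in L_1$, and $L_x\cap L_0$ for $x\in L_1$---every clause reduces to a finite case analysis on generator types.

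For clause (1), closure under union is built into the definition of $\mathcal{I}$, and closure under intersection follows from direct computations on pairs of generators. For example, $L_\alpha\cap L_\beta=L_{\min(\alpha,\beta)}$; two sets of the form $\cl(J_x)$ and $\cl(J_y)$ are pairwise comparable or disjoint (as observed right before Definition~5.5, when $J_x\cap J_y\neq\emptyset$ one nests inside the other), so their intersection stays of the same form; and the mixed cases either vanish or reduce to one of the operands together with an $L_z\cap L_0$-term. One also notes $\emptyset\in\mathcal{I}$ trivially and $L=L_{\lambda_0}\in\mathcal{I}$. Clause (5) is immediate: the generators $\cl(J_x)$ and $\cl(\{x\})$ are $L_0$-closed by construction, while $L_\alpha$ and $L_x\cap L_0$ are already down-sets in $L$, hence $L_0$-closed, and unions of $L_0$-closed sets are $L_0$-closed.

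Clauses (2) and (3) are parallel case analyses on the coordinate representation of the sequences involved. For (2), given $x<y$ with $y\in L_1$, I would produce a generator containing $x$ and contained in $L_y$ by splitting on whether $x(0)<y(0)$ (take an appropriate $L_\alpha$), whether $x$ and $y$ first differ at some later coordinate (use $\cl(\{y'\})$ or $\cl(J_{y'})$ for a suitable relevant refinement of $y$), or whether $y\subset x$ with $x(|y|)$ negative (again an appropriate $\cl$-generator captures the tail). Clause (3) is parallel: writing $A=\bigcup_i A_i$ as a finite union of generators, I would check generator-by-generator that $A_i\cap L_x\in\mathcal{I}$, using $x\in L_1\setminus A$ to force each intersection to collapse to a smaller generator of the same or simpler form.

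The main obstacle is clause (4), the well-foundedness of $\{A\cap L_1:A\in\mathcal{I}\}$ under inclusion. My approach would be to attach to each $A\in\mathcal{I}$ a rank reflecting the $L_1$-footprint of its decomposition into generators---a finite-support ordinal-valued datum encoding the parameter $\alpha$ in every $L_\alpha$-summand and the coordinate data of $x$ in every $\cl(J_x)$ and $\cl(\{x\})$ summand (the $L_x\cap L_0$-summands contribute nothing since they are disjoint from $L_1$)---and then to show that a strict descending chain $A^0\cap L_1\supsetneq A^1\cap L_1\supsetneq\cdots$ in inclusion would force a strict descent of this rank in a well-founded order (a finite lexicographic product of ordinals). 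The delicate point is verifying that strict inclusion on the $L_1$ side genuinely forces a decrease in the rank; here the co-initiality in $\lambda_n^*$ of the partitions $\{S^\alpha_n\}_{\alpha<\omega_1}$ and the compatibility condition $S^\alpha_n\cap\lambda_m^*=S^\alpha_m$ for $m<n$ are essential to rule out the potentially pathological infinite descending chains produced by the $\cl(J_x)$-generators along relevant sequences.
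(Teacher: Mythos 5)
The paper does not actually prove this lemma: it is stated as a citation of Brendle's Lemma~2.1, and the sentence immediately before it says explicitly that Definitions 5.1--5.5 and Lemma~5.6 ``can be found in'' \cite{brendle03}. So there is no in-paper argument to compare against, only the structure of what Brendle must have verified. Your plan has the right shape and you correctly single out the co-initial partitions $S^\alpha_n$ as the crucial ingredient for clause~(4). A small simplification: clause~(2) does not need the three-way split you describe. If $x\in L_1$, take $A=\cl(\{x\})=\{x\}\cup(L_x\cap L_0)$; since $x<y$ gives $L_x\subseteq L_y$ and $x\in L_y$, this $A$ already lies inside $L_y$ and belongs to $\mathcal I$. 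If $x\in L_0$, take $A=L_y\cap L_0$.

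The genuine soft spot is clause~(4). You propose to attach a rank to a chosen decomposition of $A$ into generators and to claim that strict inclusion of $L_1$-traces forces a drop in ``a finite lexicographic product of ordinals.'' Two problems: a lexicographic product whose number of factors varies across the chain is not automatically well-founded (one can manufacture descending chains by letting the length grow), and your rank depends on the decomposition rather than on the set $A\cap L_1$ itself, so order-preservation is not immediate. What actually closes this is the observation right after the definition of $J_x$: in a strictly nested chain $J_{x_0}\supsetneq J_{x_1}\supsetneq\cdots$ one has $x_{i+1}<x_i$, $|x_i|\le|x_{i+1}|$, $x_{i+1}\restrict(|x_i|-1)=x_i\restrict(|x_i|-1)$, and $x_{i+1}(|x_i|-1)\le x_i(|x_i|-1)$. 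Whenever the length strictly increases, the relevance condition on $x_{i+1}$ (applied to the two even positions $|x_i|-1$ and $|x_{i+1}|-1$, both with $\omega_1$-valued entries) forces a strictly descending chain of $S$-indices in $\omega_1$; hence only finitely many distinct lengths occur. Within a block of constant length, the $\omega_1$-valued last coordinate strictly decreases, so each block is finite. This is what bounds the ``number of factors'' and should be stated explicitly rather than folded into a generic appeal to a well-founded lex order. With that pinned down, the $L_\alpha$- and $\cl(\{x\})$-parts contribute a well-ordered family and finitely many singletons to $L_1$, and finite unions are then handled by a pigeonhole on which generator shrinks at each step of a hypothetical infinite descent.
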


Until the end of the section, let $\PP=\PP(\mathcal{T},\QQ_{L_0},\mathbb{L})$ where $\QQ_{L_0}$ is the poset for adding a
cofinitary group with $L_0$ generators (see Definition 2.4) and $\mathbb{L}$ is the localization-forcing.

\begin{lemma}\label{ag_lowerbound} In $V^{\PP}$ there is a maximal cofinitary group of size $\lambda$ and $\lambda_0\leq\mathfrak{a}_g$.
\end{lemma}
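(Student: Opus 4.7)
The plan is to deduce Lemma~\ref{ag_lowerbound} directly from the two general estimates already established: Lemma~\ref{l.nonM} (the localization-based lower bound for $\mathfrak{a}_g$) and Lemma~\ref{l.mcgL0} (the maximality statement for the cofinitary group added by $\Q_{L_0}$). What remains is simply to verify that Brendle's template $\mathcal{T}=((L,\leq),\mathcal{I},L_0,L_1)$ satisfies the hypotheses of both lemmas.

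For the lower bound $\lambda_0\leq\mathfrak{a}_g$, I would apply Lemma~\ref{l.nonM} with $\mu=\lambda_0$. Note $\lambda_0$ is regular and uncountable since $\lambda_0\geq\aleph_2$. Identifying each $\alpha<\lambda_0$ with the length-one sequence $\langle\alpha\rangle\in L$ gives an order-embedding of $\lambda_0$ into $L_1$ (length-one sequences lie in $L_1$ by definition). Taking $I_0=\{\alpha\}$, $I_1=I_2=I_3=\emptyset$ in the definition of $\mathcal{I}$ shows $L_\alpha\in\mathcal{I}$ for every $\alpha<\lambda_0$. The only nontrivial check is cofinality of $\lambda_0$ in $L$: given any $x\in L$ with $x(0)=\alpha_0<\lambda_0$, comparing with $\langle\alpha_0+1\rangle$ we see that if $|x|=1$ then $x=\langle\alpha_0\rangle<\langle\alpha_0+1\rangle$, and if $|x|\geq 2$ then $x$ and $\langle\alpha_0+1\rangle$ first differ at coordinate $0$ where $x(0)<\alpha_0+1$, so again $x<\langle\alpha_0+1\rangle$. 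Thus Lemma~\ref{l.nonM} yields $\PP\Vdash\non(\mathcal{M})=\lambda_0$, and hence $\PP\Vdash\mathfrak{a}_g\geq\lambda_0$ by Theorem~\ref{t.agM}.

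For the existence of a maximal cofinitary group of size $\lambda$, I would apply Lemma~\ref{l.mcgL0}. The two structural hypotheses - that $L$ has uncountable cofinality and that $L_0$ is cofinal in $L$ - are already recorded in the remark following the definition of $L_0$ and $L_1$. It remains only to compute $|L_0|=\lambda$: on the one hand $|L_0|\leq|L|\leq\sum_{n<\omega}\lambda^n=\lambda$, and on the other hand the length-two sequences $\langle 0,\xi\rangle$ with $\xi\in[\lambda_1,\lambda)$ all lie in $L_0$, giving $|L_0|\geq\lambda$. Therefore Lemma~\ref{l.mcgL0} supplies a maximal cofinitary group of cardinality $|L_0|=\lambda$ in $V^{\PP}$. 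There is no real obstacle in this proof; the substantive work has already been done in the two estimates, and the role of this lemma is purely to record that Brendle's template is compatible with both of them.
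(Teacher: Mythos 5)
Your proof is correct and takes exactly the same route as the paper: both the paper's argument and yours deduce the lemma by applying Lemma~\ref{l.mcgL0} (with the observations that $L_0$ is cofinal in $L$, $L$ has uncountable cofinality, and $|L_0|=\lambda$) and Lemma~\ref{l.nonM} with $\mu=\lambda_0$ (observing that $\lambda_0$ embeds cofinally into $L_1$ via $\alpha\mapsto\langle\alpha\rangle$ and $L_\alpha\in\mathcal I$). You have merely spelled out the routine verifications that the paper leaves to the reader.
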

\begin{proof} Since $L_0$ is cofinal in $L$ and $L$ is of uncountable cofinality, by Lemma 4.5 $\PP$ adds a maximal
cofinitary group of size $|L_0|=\lambda$. Since $\lambda_0\subseteq L_1$ is cofinal in $L$ and $L_\alpha\in\mathcal{I}$
for all $\alpha<\lambda_0$, by Lemma 4.1 we have $\lambda_0\leq\mathfrak{a}_g$.
\end{proof}

We say that $\dot{g}$ is a {\emph{good name}} for a real, if there are predense sets $\{p_{n,i}\}_{i\in\omega}$, where $n\in\omega$  and sets of
integers $\{k_{n,i}\}_{i\in\omega}$, $n\in\omega$ such that $p_{n,i}\Vdash\dot{g}(n)=k_{n,i}$ for all $n,i$. That is $\{p_{n,i}\}_{i\in\omega}$ is a predense set of conditions deciding the value of $\dot{g}(n)$.  Whenever $\dot{g}$ is a good name for a real, we will refer to $\bigcup_{n,i\in\omega}\hbox{dom}(p_{n,i})$ as the $L$-domain of $\dot{g}$ and denote it $\hbox{dom}_L(\dot{g})$.  We can assume that all $\PP$-names for reals are good.

The following lemma is the essence of the isomorphism of names argument, due to Brendle. Its proof follows almost identically~\cite[pages 2646-2648]{brendle03}.

\begin{lemma}[Brendle,~\cite{brendle03}]\label{isometry} Let $\lambda_0\leq\kappa<\lambda$ and for every $\beta\in\kappa$ let $B^\beta=\hbox{dom}_L(\dot{g}^\beta)$ be  countable subtree of $L$, where $\dot{g}^\beta$ is a good name for a cofinitary permutation. Then there are a countable subset $B^\kappa$ of $L$ and a good name for a cofinitary permutation $\dot{g}^\kappa$ such that
\begin{enumerate}
{\item $\Vdash_{\PP}\dot{g}^\kappa\neq\dot{g}^\beta$ for all $\beta<\kappa$,}
{\item $\hbox{dom}_L(\dot{g}^\kappa)=B^\kappa$,}
{\item for every $F\in [\kappa]^{<\omega}$ there is $\alpha<\kappa$ and a partial order isomorphism
$$\chi_{F,\alpha}:\PP_{\hbox{cl}(\bigcup_{\beta\in F}B^\beta\cup B^{\alpha})}\to \PP_{\hbox{cl}(\bigcup_{\beta\in F}B^\beta\cup B^\kappa)},$$
which maps $\dot{g}^\alpha$ to $\dot{g}^\kappa$ and fixes $\dot{g}^\beta$ for $\beta\in F$.}
\end{enumerate}
\end{lemma}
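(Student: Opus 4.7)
The plan is to adapt Brendle's isomorphism-of-names argument from~\cite{brendle03} to the current setting; only cosmetic changes are needed because the template $\mathcal T$, the structure of $L$, and the combinatorial ingredients ($S^\alpha_n$-partitions, relevant elements, co-initiality of $\lambda_n^*$) are identical to Brendle's, while the names $\dot g^\beta$ are now good names for cofinitary permutations added by the $\Q_{L_0}$-coordinates of $\PP$. First pick $n_0$ with $\kappa<\lambda_{n_0}$, so that $\kappa^{\aleph_0}<\lambda_{n_0}$ by the hypothesis on the $\lambda_n$'s. Each pair $(B^\beta,\dot g^\beta)$ carries only countably much information (a countable tree plus countably many predense sets deciding values), so there are at most $\kappa^{\aleph_0}$ ``isomorphism types'' of such pairs once coordinate levels are accounted for. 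A pigeonhole/$\Delta$-system refinement then produces $X\subseteq\kappa$ of size $\ge\aleph_1$ on which all data are uniform: the trees $B^\beta$ for $\beta\in X$ are order-isomorphic to a fixed blueprint $\bar B$ via canonical bijections, and the good names pull back to a single template $\dot{\bar g}$ on $\bar B$.

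Next I would place a copy of $\bar B$ high up in $L$, inside an interval $J_x$ for some $x\in L_{\mathrm{rel}}$ chosen so that for any given $F\in[\kappa]^{<\omega}$ the countable tree $\cl(\bigcup_{\beta\in F}B^\beta)$ is separated from $J_x$ at the $\mathcal I$-level. Concretely, since $\cl(\bigcup_{\beta\in F}B^\beta)$ meets only countably many $S^\gamma_n$'s at each level and $\{S^\gamma_n\}_{\gamma<\omega_1}$ is co-initial in $\lambda_n^*$, one can route $x$'s negative coordinates through avoided $S^\gamma_n$'s so that $J_x$ is disjoint from $\cl(\bigcup_{\beta\in F}B^\beta)$. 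I would fix $B^\kappa$ as a single such translated copy of $\bar B$ and set $\dot g^\kappa$ to be the transfer of $\dot{\bar g}$ along the blueprint-to-copy bijection. For each $F$, the required $\alpha\in X$ is then produced by observing that $X$ is uncountable while only countably many positions are excluded by $F$, so co-initiality of $\{S^\gamma_n\}_{\gamma<\omega_1}$ ensures such an $\alpha\in X\setminus F$ can always be found, and the composition ``blueprint bijection for $\alpha$'' followed by ``inverse blueprint bijection for $\kappa$'' supplies the required translation on the $B^\alpha\leftrightarrow B^\kappa$ side.

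The isomorphism $\chi_{F,\alpha}:\PP_{\cl(\bigcup_{\beta\in F}B^\beta\cup B^\alpha)}\to\PP_{\cl(\bigcup_{\beta\in F}B^\beta\cup B^\kappa)}$ is then defined recursively along the template: it is the identity on coordinates inside $\cl(\bigcup_{\beta\in F}B^\beta)$ and acts as the blueprint bijection $B^\alpha\leftrightarrow B^\kappa$ elsewhere. On $L_0$-coordinates this relabels generators of $\Q_{L_0}$; on $L_1$-coordinates it transports nice $\BbL$-names by relabeling their predense sets, which is well defined because the good $\sigma$-Suslin machinery of Section~3 commutes with such relabelings. That $\chi_{F,\alpha}$ is an order isomorphism uses invariance of $\mathcal I$ under the blueprint translation, which holds because $\mathcal I$ is defined purely from coordinate levels and $S^\gamma_n$-labels that the map preserves. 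By construction $\chi_{F,\alpha}(\dot g^\alpha)=\dot g^\kappa$ and $\chi_{F,\alpha}(\dot g^\beta)=\dot g^\beta$ for $\beta\in F$. Property~(1), $\Vdash\dot g^\kappa\neq\dot g^\beta$, is obtained by a density argument: $B^\kappa$ uses generators in $L_0\setminus\bigcup_{\beta<\kappa}B^\beta$, and by the Domain/Range Extension Lemma~\ref{extension} any condition can be extended to force the value of $\dot g^\kappa$ at some large $n$ to differ from the (already-decided) value of $\dot g^\beta(n)$. The main obstacle, as in Brendle's original paper, is the bookkeeping in the pigeonhole: we need one $B^\kappa$ and $\dot g^\kappa$ serving every finite $F$ simultaneously, and the name transfer must cohere with the canonical-projection machinery of Section~3; once the blueprint embedding is set up with sufficient care, the verification reduces to a routine induction on $\Rk(\mathcal T)$ mirroring the definition of $\PP$.
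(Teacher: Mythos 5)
The paper does not prove Lemma~\ref{isometry}; it explicitly defers to Brendle's argument in~\cite{brendle03}, pages 2646--2648. Your proposal is therefore trying to reconstruct that argument, and at the level of strategy you have the right ingredients: pick $n_0$ with $\kappa<\lambda_{n_0}$, use CH and $\kappa^{\aleph_0}<\lambda_{n_0}$ to thin $\kappa$ to a uniform family by a pigeonhole/$\Delta$-system argument, exploit the co-initial $S^\gamma_n$-partitions and the structure of $L_{\mathrm{rel}}$ to transplant the blueprint into a fresh interval, and define $\chi_{F,\alpha}$ recursively along the template, verifying that it respects $\mathcal I$ and commutes with the canonical projection machinery of Section~3 on the $L_1$-coordinates and with generator relabeling on the $L_0$-coordinates. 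This is indeed the skeleton of Brendle's proof, adapted to $\Q_{L_0}$ and $\mathbb L$.

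Two points need repair, however. First, your justification of property~(1) invokes Lemma~\ref{extension} (Domain/Range Extension). That lemma is a statement about the \emph{generic generators} $\rho_G(a)$ added by $\Q_{A,\rho}$: it lets one extend $s_a$ at an undefined argument to almost any value. It says nothing directly about an arbitrary good name $\dot g^\kappa$. The name $\dot g^\kappa$ is the transfer of $\dot g^\alpha$, which is an \emph{arbitrary} good name for a cofinitary permutation and need not factor through any single generator coordinate; if $\dot g^\alpha$ were (say) a check-name for a ground-model permutation, the transfer would not change it, and the Extension Lemma gives no leverage at all toward $\Vdash\dot g^\kappa\neq\dot g^\beta$. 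The actual argument for (1) rests on the uniformity extracted in the pigeonhole step together with the isomorphism $\chi$; invoking Lemma~\ref{extension} here is not correct as stated. Second, the exposition is mildly circular: you describe choosing the relevant $x$ (hence $B^\kappa\subseteq J_x$) ``so that for any given $F$'' the closures are separated, but $B^\kappa$ must be fixed \emph{once}, independently of $F$, with only $\alpha$ allowed to vary with $F$. The co-initiality of $\{S^\gamma_n\}_{\gamma<\omega_1}$ is exactly what lets a single high-up $B^\kappa$ serve every finite $F$ simultaneously (together with an appropriate $\alpha=\alpha(F)$ from the uniform set), and the presentation should make this order of quantification explicit.
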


\begin{proof}[Proof of Theorem~\ref{mainthm1}] Let $\mathcal{G}$ be a $\PP$-name for a cofinitary group of size $\kappa$, where $\lambda_0\leq\kappa<\lambda$
and let $\{\dot{g}^\beta\}_{\beta\in\kappa}$ be an enumeration of $\mathcal{G}$. For $\beta<\kappa$, let $B^\beta=\hbox{dom}_L(\dot{g}^\beta)$.
Then $B^\beta$ is at most a countable subset of $L$ and without loss of generality it is a tree. Let $B^\kappa$ and $\dot{g}^\kappa$
be as in the conclusion of Lemma~\ref{isometry}, applied to the families $\{B^\beta\}_{\beta\in\kappa}$ and $\{\dot{g}^\beta\}_{\beta\in\kappa}$. We will show
that $\mathcal{H}=\la \mathcal{G}\cup\{\dot{g}^\kappa\}\ra$ is a cofinitary group.

Let $h\in\mathcal{H}\backslash\mathcal{G}$ and let $F_0\cup\{\kappa\}$  be the indexes of the permutations
involved in $h$, where $F_0\in[\kappa]^{<\omega}$. Then by Lemma~\ref{isometry}, there are $\alpha<\kappa$
and a partial order isomorphism $$\chi=\chi_{F_0,\alpha}:\PP_{\hbox{cl}(\bigcup_{\beta\in F_0}B^\beta\cup B^{\alpha})}\to \PP_{\hbox{cl}(\bigcup_{\beta\in F_0}B^\beta\cup B^\kappa)},$$
which maps $\dot{g}^\alpha$ to $\dot{g}^\kappa$ and fixes $\dot{g}^\beta$ for $\beta\in F_0$. But then $\chi^{-1}(\dot{h})$ is a name for an element of  $\mathcal{G}$ and so $|\hbox{fix}(\chi^{-1}(h))|<\aleph_0$. Since both $\PP_{\hbox{cl}(\bigcup_{\beta\in F_0}B^\beta\cup B^\alpha)}$ and $\PP_{\hbox{cl}(\bigcup_{\beta\in F_0} B^\beta\cup B^\kappa)}$ are completely embedded in $\PP$, we obtain that $V^\PP\vDash|\hbox{fix}(h)|<\aleph_0$.
\end{proof}
\end{section}

\section{Concluding Remarks}

Let $\mathcal{T}_0$ be the template used in the proof of the consistency of $\mathfrak{a}$ being of countable cofinality (see~\cite{brendle03}), the definition
of which is also stated in the previous section.

The given construction gives also a proof of the fact that the minimal size of a family of almost disjoint permutations, denoted $\mathfrak{a}_p$ can be of countable cofinality. Let $A$ be a generating set and let $\Q_A$ be the poset for adding a maximal cofinitary group defined in section 2. Let $\bar{\Q}_A$
be the suborder consisting of all pairs $(s,F)$, where every word in $F$  is of the form $ab^{-1}$ for some $a,b\in A$.
Then $\bar{\Q}_A$ is a finite function poset with the strong embedding property which adds a set of almost disjoint permutations of cardinality $|A|$, which is maximal whenever  $|A|$ is uncountable. Then  $\PP(\mathcal{T}_0,\bar{\Q}_{L_0},\mathbb{L})$ provides the consistency of $\hbox{cof}(\mathfrak{a}_p)=\omega$. The proof of maximality follows very closely the maximal cofinitary group case and the same isomorphism of names argument shows that there are no maximal families of almost disjoint permutations of intermediate cardinalities, i.e. cardinalities between $\lambda_0$ and $\lambda$. Note also that $\non(\mathcal{M})\leq \mathfrak{a}_p$.

Another relative of the almost disjointness number, which can be approached in the same way is the minimal size of a maximal almost disjoint family of functions in $^\omega\omega$.  Let $A$ be a generating set and let $\tilde{\Q}_A$ be the poset of all pairs $(s,F)$, where $s\subseteq A\times\omega\times \omega$ is finite, $s_a$ defined as above is a finite function, and $F$ is a finite set of words in the form $ab^{-1}$ for $a\neq b$ in the index set $A$. The extension relation states that $(s,F)$ extends $(t,E)$ if $s\supseteq t$, $F\supseteq E$ and for all $w\in E$ if $e_w[s](n)$ is defined and $e_w[s](n)=n$, then $e_w[t](n)=n$. Then $\P(\mathcal{T}_0,\tilde{\Q}_{L_0},\mathbb{L})$ provides the consistency of $\mathfrak{a}_e$ being of countable cofinality. Note also that to obtain a lower bound for $\mathfrak{a}_e$ in the final generic extension, we use the fact that $\non(\mathcal{M})\leq\mathfrak{a}_e$.

The consistency of  $\hbox{cof}(\mathfrak{a})=\omega$  is due to Brendle (see~\cite{brendle03}). We want to mention that his proof also fits into our general framework. More precisely, as described in Section 3, given an uncountable  generating set $A$, there is a finite function poset with the strong embedding property $\D_A$, which adds a maximal almost disjoint family of cardinality $|A|$. Then if $\D$ denote the usual Hechler forcing for adding a dominating function, the iteration $\P(\mathcal{T}_0,\D_{L_0},\mathbb{D})$ provides the consistency of $\hbox{cof}(\mathfrak{a})=\omega$.

Thus we have obtained the following statement:

\begin{theorem} Assume CH. Let $\lambda$ be a singular cardinal of countable cofinality and let  $\bar{\mathfrak{a}}\in\{\mathfrak{a},\mathfrak{a}_p,\mathfrak{a}_g,\mathfrak{a}_e\}$. Then there are a good $\sigma$-Suslin poset $\BbS_{\bar{\mathfrak{a}}}$
and a finite function poset with the strong embedding property $\Q_{\bar{\mathfrak{a}}}$, which is Knaster (and so by Lemma~\ref{l.template_knaster}  $\P(\mathcal{T}_0,\Q_{\bar{\mathfrak{a}}},\BbS_{\bar{\mathfrak{a}}})$ is Knaster)
such that $V^{\P(\mathcal{T}_0,\Q_{\bar{\mathfrak{a}}},\BbS_{\bar{\mathfrak{a}}})}\vDash {\bar{\mathfrak{a}}}=\lambda$. Then in particular $V^{\P(\mathcal{T}_0,\Q_{\bar{\mathfrak{a}}},\BbS_{\bar{\mathfrak{a}}})}\vDash\hbox{cof}(\bar{\mathfrak{a}})=\omega$.
\end{theorem}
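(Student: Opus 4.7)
The plan is to handle all four cases by a uniform recipe modeled on the $\mathfrak{a}_g$ proof of \S 5. For $\mathfrak{a}_g$ I take $\Q_{\mathfrak{a}_g}=\Q_{L_0}$ of \S 2 paired with $\BbS_{\mathfrak{a}_g}=\mathbb{L}$. For $\mathfrak{a}_p$ I replace $\Q_{L_0}$ by its suborder $\bar{\Q}_{L_0}$ in which the side-condition words are required to be of the form $ab^{-1}$, again with $\BbS=\mathbb{L}$; for $\mathfrak{a}_e$ I use $\tilde{\Q}_{L_0}$, in which moreover the injection requirement on $s_a$ is weakened to $s_a$ being a partial function, again with $\BbS=\mathbb{L}$. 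For $\mathfrak{a}$ I take Brendle's $\D_{L_0}$ together with Hechler forcing as $\BbS$, which is exactly the pair used in~\cite{brendle03}.

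First I would verify that in each case $\Q_{\bar{\mathfrak{a}}}$ is a finite function poset with the strong embedding property, and is Knaster. For $\Q_{L_0}$ the Knaster property is Lemma~\ref{ccc}, and the strong embedding property is obtained from Claim~\ref{completeclaim} and Remark~\ref{r.strong_embedding_mcg}. The very same proofs transfer to $\bar{\Q}_{L_0}$ and $\tilde{\Q}_{L_0}$: the $\Delta$-system step in Lemma~\ref{ccc} and the saturation construction in Claim~\ref{completeclaim} never introduce new side-condition words, so restricting to the subclass of conditions whose side-condition words are of the required form is preserved throughout. For $\D_{L_0}$ this is essentially Brendle's argument. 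Lemma~\ref{l.template_knaster} then gives that the full iteration $\P(\mathcal{T}_0,\Q_{\bar{\mathfrak{a}}},\BbS_{\bar{\mathfrak{a}}})$ is Knaster.

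Next I would derive the lower bound $\bar{\mathfrak{a}}\geq\lambda_0$ in $V^\P$. For the three cases with $\BbS=\mathbb{L}$, Lemma~\ref{l.nonM} applied with $\mu=\lambda_0$ (noting that $\lambda_0\subseteq L_1$ is cofinal in $L$ and that $L_\alpha\in\mathcal{I}$ for all $\alpha<\lambda_0$, both facts coming from the structure of $\mathcal{T}_0$) gives $\non(\mathcal{M})=\lambda_0$; combining with $\mathfrak{a}_g\geq\non(\mathcal{M})$ (Theorem~\ref{t.agM}) and the analogous inequalities $\mathfrak{a}_p,\mathfrak{a}_e\geq\non(\mathcal{M})$ mentioned in the Concluding Remarks yields the required bound. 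For $\mathfrak{a}$ the Hechler factor instead forces $\mathfrak{b}=\lambda_0$, and $\mathfrak{a}\geq\mathfrak{b}$ suffices. The matching upper bound $\bar{\mathfrak{a}}\leq\lambda$ is obtained as in Lemma~\ref{l.mcgL0}: any candidate $\sigma$ extending the generic family has a name with countable support, so by Lemma~\ref{l.names} it is a $\P_{L_x}$-name for some $x\in L_0$; then a suitable analogue of Lemma~\ref{cofinlemma} (which for $\bar{\Q}_{L_0},\tilde{\Q}_{L_0},\D_{L_0}$ is a straightforward simplification of the cofinitary version, since the side-condition words now have very restricted form) produces infinitely many $n$ at which $\sigma$ coincides with a later generator, contradicting the type of family sought.

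The main obstacle, and final step, is the isomorphism-of-names argument ruling out families of intermediate cardinality $\lambda_0\leq\kappa<\lambda$. Given $\mathcal{G}=\{\dot g^\beta:\beta<\kappa\}$ a $\P$-name for such a family, each $\dot g^\beta$ is a good name with countable $L$-domain $B^\beta=\dom_L(\dot g^\beta)$, and Lemma~\ref{isometry} produces a countable $B^\kappa\subseteq L$ and a good name $\dot g^\kappa$ together with, for every finite $F\subseteq\kappa$, a partial-order isomorphism $\chi_{F,\alpha}\colon\P_{\cl(\bigcup_{\beta\in F}B^\beta\cup B^\alpha)}\to\P_{\cl(\bigcup_{\beta\in F}B^\beta\cup B^\kappa)}$ mapping $\dot g^\alpha$ to $\dot g^\kappa$ and fixing the $\dot g^\beta$ for $\beta\in F$. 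The delicate point is that $\chi_{F,\alpha}$ must simultaneously preserve the combinatorial structure of $\Q_{\bar{\mathfrak{a}}}$ and the good $\sigma$-Suslin structure of $\BbS_{\bar{\mathfrak{a}}}$ at every level of the template rank. Lemma~\ref{l.mainlemma} was engineered for precisely this: the isomorphism is built from a coordinate permutation of $L_0$ together with the identity on the $L_1$-part, and since the restricted word classes defining $\bar{\Q}_{L_0},\tilde{\Q}_{L_0},\D_{L_0}$ are invariant under coordinate permutations of $L_0$, $\chi_{F,\alpha}$ extends through the recursive definition of $\P$ using the complete-embedding clause, canonically transporting $\sigma$-Suslin names at each step. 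Once this is verified, the argument closes exactly as in the proof of Theorem~\ref{mainthm1}: $\langle\mathcal{G}\cup\{\dot g^\kappa\}\rangle$ remains a family of the required type strictly containing $\mathcal{G}$, contradicting its maximality.
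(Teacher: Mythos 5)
Your proposal matches the paper's own proof in all four cases: the choices $(\Q_{L_0},\BbL)$, $(\bar{\Q}_{L_0},\BbL)$, $(\tilde{\Q}_{L_0},\BbL)$, and $(\D_{L_0},\mathbb{H})$, the lower bounds via $\non(\mathcal{M})$ (respectively $\mathfrak{b}$ for $\mathfrak{a}$), the upper bound via the analogue of Lemma~\ref{l.mcgL0}, and closing with the isomorphism-of-names Lemma~\ref{isometry} are exactly how the paper's \S5 and Concluding Remarks proceed. The only small imprecision is the description of $\chi_{F,\alpha}$ as a permutation of $L_0$ fixing $L_1$ --- in fact it is induced by an order isomorphism of the countable index sets $B^\alpha\to B^\kappa$ which may move $L_1$-coordinates as well --- but the substantive point you rely on, that the restricted side-condition word classes are invariant under such renamings, is exactly what makes Brendle's argument carry over.
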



\begin{thebibliography}{00}
\bibitem{judabarto}
T. Bartoszyn\'{n}ski, H. Judah
{\it Set theory: on th structure of the real line}. A.K. Peters, 1995. xii+546pp.
\bibitem{brendle03}
 J. Brendle
     {\emph{The almost-disjointness number may have countable cofinality}}
     Trans. of the Amer. Math. Soc. \textbf{355}, no. 7 (2003), 2633--2649.
\bibitem{brspzh00}
J. Brendle, O. Spinas, Y. Zhang
{\it Uniformity of the meager ideal and maximal cofinitary groups}
J. Algebra, 232 (1):209-225, 2000.
\bibitem{kunen80}
K. Kunen
{\it Set theory: an introduction to independence proofs}
North-Holland, 1980. xvi+313pp.
\bibitem{shelah}
 S.Shelah
{\it Two cardinal invariants of the continuum $(\mathfrak{d}<\mathfrak{a})$ and FS linearly ordered iterated forcing}.
 Acta Math., 192: 187-223, 2004.
\bibitem{zhang1}
 Y. Zhang
 {\it Maximal cofinitary groups}
 Arch. Math. Logic (2000) 39: 41-52
\end{thebibliography}
\end{document}